\setlist[itemize]{leftmargin=*} 
\theoremstyle{plain}
\newtheorem*{thm*}{Theorem}
\newtheorem{thm}{Theorem}
\Crefname{thm}{Theorem}{Theorems}
\numberwithin{thm}{section}
\newtheorem*{lem*}{Lemma}
\newtheorem{lem}[thm]{Lemma}
\Crefname{lem}{Lemma}{Lemmas}
\newtheorem*{claim*}{Claim}
\newtheorem{claim}[thm]{Claim}
\crefname{claim}{Claim}{Claims}
\Crefname{claim}{Claim}{Claims}
\newtheorem{prop}[thm]{Proposition}
\Crefname{prop}{Proposition}{Propositions}
\newtheorem{cor}[thm]{Corollary}
\crefname{cor}{Corollary}{Corollaries}
\crefname{conj}{Conjecture}{Conjectures}
\Crefname{qn}{Question}{Questions}
\Crefname{obs}{Observation}{Observations}
\Crefname{ex}{Example}{Examples}
\theoremstyle{definition}
\Crefname{prob}{Problem}{Problems}
\newtheorem{defn}[thm]{Definition}
\Crefname{defn}{Definition}{Definitions}
\theoremstyle{remark}
\renewenvironment{proof}[1][]{\begin{trivlist}
\item[\hspace{\labelsep}{\bf\noindent Proof#1.\/}] }{\qed\end{trivlist}}
\newcommand{\remove}[1]{}
\newcommand{\R}{\overleftrightarrow{r}} 
\newcommand{\RT}{\overrightarrow{r}} 
\newcommand{\eps}{\varepsilon}
\newcommand{\dirpath}[1]{\overrightarrow{P_{#1}}}
\newcommand{\dircomp}[1]{\overleftrightarrow{K_{#1}}}
\DeclareMathOperator{\leaf}{lf}
\def\expandafter\normalsize\expandafter{%
    \normalsize
    \setlength\abovedisplayskip{4pt}
    \setlength\belowdisplayskip{4pt}
    \setlength\abovedisplayshortskip{4pt}
    \setlength\belowdisplayshortskip{4pt}
}
\begin{document}


\vspace{-0.9cm}
\title{Directed Ramsey number for trees}
\date{\vspace{-5ex}}
\author{
	Matija Buci\'c\thanks{
	    Department of Mathematics, 
	    ETH, 
	    8092 Zurich;
	    e-mail: \texttt{matija.bucic}@\texttt{math.ethz.ch}.
	}
	\and
    Shoham Letzter\thanks{
        ETH Institute for Theoretical Studies,
        ETH,
        8092 Zurich;
        e-mail: \texttt{shoham.letzter}@\texttt{eth-its.ethz.ch}.
    }
    \and
	Benny Sudakov\thanks{
	    Department of Mathematics, 
	    ETH, 
	    8092 Zurich;
	    e-mail: \texttt{benjamin.sudakov}@\texttt{math.ethz.ch}.     
	}
}

\maketitle
\begin{abstract}
    \setlength{\parskip}{\medskipamount}
    \setlength{\parindent}{0pt}
    \noindent In this paper, we study Ramsey-type problems for directed graphs. 
    We first consider the $k$-colour \textit{oriented Ramsey number} of $H$, denoted by $\RT(H,k)$, which is the least $n$ for which every $k$-edge-coloured tournament on $n$ vertices contains a monochromatic copy of $H$. We prove that $ \RT(T,k) \le c_k|T|^k$ for any oriented tree $T$. This is a generalisation of a similar result for directed paths by Chv\'atal and by Gy\'arf\'as and Lehel, and answers a question of Yuster. In general, it is tight up to a constant factor.
    
    We also consider the $k$-colour \emph{directed Ramsey number} $\R(H,k)$ of $H$, which is defined as above, but, instead of colouring tournaments, we colour the complete directed graph of order $n$. Here we show that $ \R(T,k) \le c_k|T|^{k-1}$ for any oriented tree $T$, which is again tight up to a constant factor, and it generalises a result by Williamson and by Gy\'arf\'as and Lehel who determined the $2$-colour directed Ramsey number of directed paths.
\end{abstract}

\section{Introduction}

An \textit{oriented graph} is a directed graph in which between any two vertices there is at most one edge. The \emph{underlying graph} of a directed graph is the graph obtained by removing orientation from its edges.

One of the most classical results in the theory of directed graphs is the Gallai-Hasse-Roy-Vitaver theorem \cite{gallai1968directed, hasse1965algebraischen, roy1967nombre,  vitaver1962determination}, abbreviated here as the GHRV theorem, which states that any directed graph, whose underlying graph has chromatic number\footnote{Throughout the paper by chromatic number we always mean \textit{vertex} chromatic number.} at least $n$, contains a directed path of length $n-1$. Note that, by the length of a path we mean the number of \emph{edges} in the path.

It is natural to ask if there are directed graphs, other than the directed path, which are guaranteed to exist in any $n$-chromatic oriented graph. We note that if $H$ is a directed graph whose underlying graph contains a cycle, then for every $k$, the graph $H$ is not guaranteed to be a subgraph of every $k$-chromatic directed graph, due to the existence of graphs with arbitrarily large girth and chromatic number. Furthermore, if  $H$ contains a bidirected edge, it is clearly not a subgraph of any oriented graph. Hence, we remain with oriented trees (and forests). This question was first asked by Burr \cite{burr1980subtrees} in $1980$, who conjectured that every $(2n-2)$-chromatic digraph contains every oriented tree of order $n$. If true, this is best possible, as a regular tournament on $2n-3$ vertices is clearly $(2n-3)$-chromatic, but has maximum out-degree $n-2$, so it does not contain an out-directed star on $n$ vertices. The conjecture is still widely open and even a weaker version of it, where $2n-2$ is replaced by $c n$ for a large constant $c$, is not known. Burr showed that the statement holds for $(n-1)^2$-chromatic digraphs, and the best general result in this direction is due to Addario-Berry, Havet, Sales, Reed and Thomass\'e \cite{addario2013oriented} who proved it for $(n^2/2-n/2+1)$-chromatic digraphs. It is of note that the conjecture is open even for relatively simple trees, such as arbitrarily oriented paths (though the very special case of directed paths with two blocks is solved \cite{el-sahili2007two-blocks,addario-berry07two-blocks}).

The GHRV theorem has the following interesting application to Ramsey theory:
any $2$-edge-colouring of a tournament on $n^2+1$ vertices contains a monochromatic directed path of length $n$. Indeed, denote the subgraph of red edges by $T_R$ and the subgraph of blue edges by $T_B$. The union $T_R \cup T_B$ has chromatic number $n^2 + 1$. Recall that the chromatic number of a union of two graphs $F$ and $H$ is at most the product of the chromatic numbers of $F$ and $H$. Thus, without loss of generality, the chromatic number of $T_R$ is at least $n+1$, so by the GHRV theorem, it contains a directed path of length $n$. In particular, there is a monochromatic directed path of length $n$.

The above argument extends easily to more colours, showing that every $k$-edge-colouring of any tournament on $n^k + 1$ vertices contains a monochromatic path of length $n$. This was observed by Chv\'atal \cite{chvatal1972monochromatic} and Gy\'arf\'as and Lehel \cite{gyarfas1973ramsey} who also obtained a similar result for paths of different lengths. Gy\'arf\'as and Lehel also observed that there is a simple grid construction that shows that the bound $n^k + 1$ is tight.


Generalising this example, we define the $k$-colour \textit{oriented Ramsey number} of an oriented graph $H$, denoted by $\RT(H,k)$, to be the least integer $n$, for which every $k$-edge-coloured tournament on $n$ vertices contains a monochromatic copy of $H$. The aforementioned results show that $\RT(\dirpath{n}, k) = (n-1)^k+1$, where $\dirpath{n}$ is the directed path of order $n$.  

It is natural to  consider the extension of the above example to oriented trees. Namely, what can one say about the $k$-colour oriented Ramsey number of trees?
Even for $k = 1$, this questions is interesting and difficult. The celebrated conjecture of Sumner from $1971$, states that any tournament on $2n-2$ vertices contains any oriented tree on $n$ vertices (where $n \ge 2$; note that this is a special case of Burr's conjecture). It is clear that we cannot hope for a better result in general, because, as before, a regular tournament on $2n-3$ vertices does not contains an out-directed star of order $n$. Thomason \cite{thomason1989} proved that for sufficiently large $n$, every tournament on $n+1$ vertices contains every orientation of a path of order $n$, thus proving Sumner's conjecture for oriented paths (and large $n$).
H\"aggkvist and Thomason \cite{haggkvist1991} were the first to show that the statement for general trees holds for tournaments on at least $cn$ vertices, where $c$ is a constant. Following improvements by Havet \cite{havet2002trees} and Havet and Thomass\'e \cite{havet-thomasse2000median}, El Sahili \cite{el2004trees} used the notion of median orders, first used as a tool for Sumner's conjecture in \cite{havet-thomasse2000median}, to show that the statement holds for tournaments on $3n-3$ vertices; this is currently the best known upper bound for general $n$.
More recently, Sumner's conjecture was proved for sufficiently large $n$ by K\"uhn, Mycroft and Osthus \cite{kuhn2010proof}.

We note that Burr's conjecture, if true, would imply that $\RT(T, k) \le c_k|T|^k$ for every oriented tree, where $c_k$ is a constant that depends only on $k$. Indeed, consider a $k$-edge-colouring of a tournament on $(2n-3)^k+1$ vertices. Let $G_i$ be the subgraph consisting of all edges in colour $i$, for $i \in [k]$. Then there exists $i \in [k]$ for which $G_i$ has chromatic number at least $2n-2$; hence, by Burr's conjecture, $G_i$ contains a copy of $T$. In this paper we prove that $\RT(T, k) \le c_k|T|^k$ using a different approach.

\begin{thm}\label{thm:tournament-trees}
        There is a constant $c_k$ such that for any oriented tree $T$ the following holds.
        $$\RT(T,k) \le c_k|T|^k.$$
\end{thm}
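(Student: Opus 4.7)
The plan is to proceed by induction on $|T|$, strengthening the inductive hypothesis so as to carry extension flexibility. For the base case, when $|T|$ is smaller than some $k$-dependent constant, the bound follows from El Sahili's theorem \cite{el2004trees} ($\RT(T,1) \le 3|T| - 3$) combined with a pigeonhole argument over colours.

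The strengthened inductive statement I would use: in any $k$-edge-coloured tournament on $N \ge c_k |T|^k$ vertices, there exist a colour $c$ and a monochromatic copy $\phi$ of $T$ in colour $c$ such that, for every image vertex $\phi(v)$, there are at least $\gamma_k \cdot |T|$ vertices outside the image joined to $\phi(v)$ by a colour-$c$ edge, in each of the two possible directions. Armed with this, the leaf-extension step is straightforward: take any leaf $\ell$ of $T$ with parent $p$, set $T' := T - \ell$, and apply the hypothesis to $T'$ to obtain a monochromatic copy $\phi'$ in some colour $c$ with the flexibility property; then pick $\phi(\ell)$ as any unused colour-$c$ neighbour of $\phi'(p)$ in the correct direction, which exists because $\phi'(p)$ has at least $\gamma_k(|T|-1) > |T|$ such colour-$c$ neighbours outside the image of $\phi'$.

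To propagate the flexibility condition, I would restrict the inductive application of the hypothesis to a carefully chosen sub-tournament $D' \subseteq D$ in which, for the chosen colour $c$, every vertex has large colour-$c$ out-degree and large colour-$c$ in-degree inside $D'$. This guarantees that each newly added image vertex, picked inside $D'$, automatically inherits the flexibility property. The sub-tournament $D'$ would be obtained by iterative removal: repeatedly discard any vertex whose colour-$c$ in- or out-degree falls below a chosen threshold, until a stable sub-tournament remains; the threshold is tuned so that the remaining set still has size $\Omega_k(N)$.

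The main obstacle is choosing the colour $c$ correctly. A naive ``most popular colour'' choice need not simultaneously yield large in- and out-degrees, since individual vertices can have very skewed colour distributions across directions. A cleaner route is probably to pick $c$ via a double averaging argument: first average over vertices to find a colour which is dominant in some direction ``on average,'' then iteratively refine to a sub-tournament in which this colour dominates for every remaining vertex in both directions. The delicate point is tracking the constants through this refinement so that $c_k$ depends only on $k$, not on $|T|$, and so that the flexibility constant $\gamma_k$ does not deteriorate from one inductive step to the next; ensuring both is what ultimately controls the growth of $c_k$ in $k$.
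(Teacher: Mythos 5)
The engine of your plan---pass to a single sub-tournament $D'$ in which every vertex has large colour-$c$ in-degree \emph{and} out-degree, then grow $T$ leaf by leaf inside $D'$---cannot work, and it fails precisely on the extremal example. In the lexicographic colouring of the transitive tournament that witnesses tightness (colour an edge by the first coordinate in which its endpoints differ), every colour class is an acyclic digraph, and an acyclic digraph contains no nonempty vertex set of minimum out-degree at least one: whichever set survives, its last vertex in a topological order has out-degree zero inside the set. Hence for every colour and every threshold your iterative deletion removes \emph{all} vertices, and no averaging argument for choosing $c$ can rescue it. This is exactly why the paper works with the asymmetric mindegree \emph{pair} of Lemma~\ref{lem:split} (vertices of $X$ need many colour-$c$ out-neighbours in $Y$, vertices of $Y$ many colour-$c$ in-neighbours in $X$), a bipartite-type condition that acyclic colour classes do admit. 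A second, independent problem is colour control: your inductive hypothesis only yields a flexible copy in \emph{some} colour, so after restricting to a $D'$ defined by colour-$c$ degrees the returned copy of $T'$ may be in a different colour and the new leaf inherits nothing; and you cannot strengthen the hypothesis to ``every dense colour contains $T$'', since a colour comprising half of all edges may consist entirely of edges directed from one half of the vertex set to the other, and then it contains no directed path with two edges (and its two-sided refinement is empty). What is genuinely missing is a way to \emph{discard} a colour when it fails: the paper's Lemma~\ref{lem:path-vs-indep}, via Theorem~\ref{thm:ghrv}, shows that if the majority colour cannot host the required structure then it has an independent set of size about $N/|T|$, i.e.\ a large subtournament on which only $k-1$ colours appear, and iterating this colour elimination---paying a factor of order $|T|$ per colour---is where the exponent $k$ in $c_k|T|^k$ comes from. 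Your sketch has no step at which a $k$-th power could arise.

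Even setting this aside, the bookkeeping of the leaf-by-leaf induction does not close. With target $\RT(T,k)\le c_k|T|^k$ and one vertex of $T$ added per step, you can afford to lose only an $O(k/|T|)$ fraction of the host at each step; losing a constant fraction (``$|D'|=\Omega_k(N)$'') at each of $|T|$ nested steps compounds to an exponential loss, forcing $c_k$ to depend on $|T|$. The flexibility condition also does not self-propagate as stated: after the extension, each previously embedded vertex has at least $\gamma_k(|T|-1)-1<\gamma_k|T|$ spare colour-$c$ neighbours in a given direction, so the statement you recover is strictly weaker than the one you assumed. Finally, the base case is not right as described: the majority colour class of a tournament is not itself a tournament, so Theorem~\ref{thm:el-sahili} cannot be applied after a pigeonhole step; for trees of bounded order one instead gets finiteness from the chromatic-number argument of the introduction (the product of the chromatic numbers of the colour classes is at least $N$, and Burr's $(n-1)^2$-chromatic result then gives roughly $\RT(T,k)\le(|T|-1)^{2k}$), which a large $c_k$ can absorb. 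For comparison, the paper's proof inducts on the number of colours together with a leaf parameter, combining Lemma~\ref{lem:split}, the dichotomy of Lemma~\ref{lem:path-vs-indep}, the core decomposition of $T$, and El Sahili's theorem for the one-colour case.
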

    
This result is tight up to a constant factor for some trees, including directed paths. 
With \Cref{thm:tournament-trees} we make progress towards answering a question of Yuster \cite{Yuster20171}, who asked the following question: given $k$ and $t$, what is the minimum $n$ such that $\RT(T,k) \le n$ for every oriented tree $T$ of order at most $t$. Indeed, we show that this minimum is at most $c_k t^k$, which is a constant factor away from the lower bound $(t-1)^k$ which follows from  \Cref{thm:tournament-path} below. We note that Yuster proved that this lower bound is tight when $k \ge t \log t$. Note that if we fix the number of colours then Yuster's result only applies for trees of relatively small order, while our result applies for trees of any order. 
In the case of arbitrarily oriented paths, our proof of Theorem \ref{thm:tournament-trees} simplifies significantly and we obtain a stronger bound. 
Given an oriented path $P$, we denote by $l(P)$ the length of the longest directed subpath of $P$. Our second theorem provides a tight upper bound (up to a constant factor) on the oriented Ramsey number of an arbitrarily oriented path $P$, in terms of $l(P)$ and its length $n=|P|-1$.   

\begin{restatable}{thm}{thmpath}\label{thm:tournament-path}
    Let $P$ be a path of length $n$ with $l = l(P)$. Then the following holds for $c_k = 8^{k}\cdot k!$.
    $$n \cdot l^{k-1}  \le \RT(P,k) \le c_k \cdot n \cdot l^{k-1}.$$
\end{restatable}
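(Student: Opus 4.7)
For the \textbf{lower bound}, I plan to use a grid construction on the vertex set $V=[l]^{k-1}\times[n]$, which has $nl^{k-1}$ vertices (removing one vertex yields the target count). Colour the edge between two vertices by the first coordinate on which they differ, and orient it from the smaller value to the larger on that coordinate. The first $k-1$ coordinates each give one of the colours $1,\ldots,k-1$, whose colour classes are blow-ups of $TT_l$; these have longest directed path $l-1$, which is too short to accommodate the length-$l$ block of $P$. The last coordinate gives colour $k$, which, crucially, is a \emph{disjoint} union of copies of $TT_n$ indexed by the first $k-1$ coordinates; since each $TT_n$ is acyclic and contains only directed paths of length at most $n-1$, colour $k$ also does not contain $P$. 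Hence no colour class contains $P$, and $\RT(P,k) > nl^{k-1}-1$.

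For the \textbf{upper bound} I plan to induct on $k$. The base case $k=1$ uses El Sahili's theorem, which gives $\RT(P,1)\le 3n\le 8n=c_1 n$. For the inductive step, I assume $\RT(P,k-1)\le c_{k-1}\,n\,l^{k-2}$ with $c_{k-1}=8^{k-1}(k-1)!$, and take a $k$-coloured tournament $T$ on $N=c_k\,n\,l^{k-1}=8^k k!\cdot nl^{k-1}$ vertices. The main tool is a \emph{colour-depth decomposition}: for each colour $c$, let $\mathrm{depth}_c(v)$ denote the length of the longest monochromatic colour-$c$ directed path ending at $v$. Vertices of equal $c$-depth form a colour-$c$-free set, and if $D_c$ is the longest colour-$c$ directed path in $T$ there are $D_c+1$ such depth classes.

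I would then split into two cases. In \textbf{Case 1}, some colour $c$ satisfies $D_c\le 8kl-1$; then the largest $c$-depth class $U$ has $|U|\ge N/(8kl)=c_{k-1}\,n\,l^{k-2}$, and since $U$ is colour-$c$-free it is effectively $(k-1)$-coloured, so the inductive hypothesis applied to $U$ produces a monochromatic $P$. The factor $8k$ lost at this step compounds over $k$ inductions to give the stated $c_k=8^k k!$. In \textbf{Case 2}, every colour has $D_c\ge 8kl$; then some colour $c$ contains a very long monochromatic directed path $Q$, and I would use $Q$ as a scaffold for the length-$l$ block of $P$ (or for the whole of $P$ when $P$ is directed), while recruiting vertices outside $Q$ to supply the remaining blocks of $P$ in colour $c$ with the correct orientations. \textbf{The main obstacle} is precisely this second case: embedding a non-directed $P$ around a single monochromatic directed scaffold requires locating colour-$c$ extension vertices at each of $P$'s block endpoints without reusing scaffold vertices, and this delicate step is where most of the work and the tight control of the constant $c_k$ reside.
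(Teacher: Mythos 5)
Your lower bound is the paper's construction and works; the cleanest reason that colour $k$ contains no copy of $P$ is simply that $P$ is connected and has $n+1$ vertices while each colour-$k$ component is a tournament on only $n$ vertices (your stated reason, the length of directed paths in $TT_n$, would only rule out a directed $P$). The upper bound, however, has a genuine gap, in two places. First, the claim that vertices of equal $c$-depth form a colour-$c$-free set is false once the colour class contains directed cycles: in a monochromatic directed triangle all three vertices have depth $2$ and yet are joined by colour-$c$ edges. This part is repairable by invoking Theorem \ref{thm:ghrv} directly (its proof passes to a maximal acyclic subgraph), so Case 1 — no directed path of length $8kl$ in some colour implies chromatic number at most $8kl$ for that colour, hence a colour-free set of size $N/(8kl)$ — survives.

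The decisive problem is Case 2, which you explicitly leave open, and it is not a technicality: knowing that every colour contains a directed path of length $8kl$ gives no handle for embedding a non-directed $P$ monochromatically, because you have no control whatsoever over the colour-$c$ edges between your scaffold $Q$ and the rest of the tournament. Colour $c$ could consist of exactly one long directed path and nothing else, in which case there are no colour-$c$ extension vertices at any block endpoint, so the "recruiting" step in your sketch has nothing to recruit. The paper avoids this dichotomy altogether: take the majority colour $c$, so $e(T_c)\ge\frac1k\binom{m}{2}$, extract an $\frac{m}{8k}$-mindegree pair $(X,Y)$ in colour $c$ (Lemma \ref{lem:split}), and then apply Lemma \ref{lem:path-vs-indep}: either $P$ can be embedded block by block, alternating between $X$ and $Y$, where each directed block of length at most $l$ is found inside the at least $\frac{m}{8k}-n$ still-available out- (or in-) neighbours of the current endpoint via Theorem \ref{thm:ghrv}, or this fails only because there is a colour-$c$-independent set of size at least $\frac{m/(8k)-n}{l}\ge c_{k-1}nl^{k-2}$, to which the induction on $k$ applies. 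The mindegree pair is exactly the device guaranteeing the extension vertices your plan lacks; without it (or an equivalent substitute) your Case 2 is unproven and the induction does not close.
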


It is worth noting that this result is generally stronger than the bound which would be implied by Burr's conjecture. 

An important distinction between the usual notion of Ramsey theory, and the variant of oriented Ramsey that we have introduced, is the fact that there is only one complete graph on $n$ vertices, while there are many tournaments on $n$ vertices. In particular, the answer to how large a monochromatic tree we can find in an edge colouring of a tournament $T$ on $n$ vertices, depends on $T$ as well as the colouring. For example, if $T$ is the transitive tournament on $n$ vertices, there is a $2$-edge-colouring with no monochromatic directed path of length $\sqrt{n}$. Contrasting this, in a recent paper \cite{paths-in-random-tournaments}, we prove that if $T$ is a random tournament on $n$ vertices then, with high probability, in every $2$-edge-colouring of $T$ there is a monochromatic path of length at least $\frac{cn}{\sqrt{\log n}}$, where $c > 0$ is an absolute constant.

Keeping this in mind, an underlying structure more analogous to undirected Ramsey case is the complete directed graph on $n$ vertices, which we denote by $\dircomp{n}$. Following Bermond \cite{bermond1974some}, we define the $k$-colour \textit{directed Ramsey number} of an oriented graph $H$, denoted by $\R(H,k)$, to be the least integer $n$ for which every $k$-edge-colouring of $\dircomp{n}$ contains a monochromatic copy of $H$; we emphasize that the edges $xy$ and $yx$ are allowed to have different colours. It is easy to see that the $k$-colour directed Ramsey number of a directed graph $G$, for $k\ge 2$, is finite if and only if $G$ is acyclic.  

Very few directed Ramsey numbers are known; here we outline some of the few results in this area. Harary and Hell \cite{harary74} introduced the notion of directed Ramsey numbers (for two colours) and determined its value for certain small graphs. Gy\'arf\'as and Lehel \cite{gyarfas1973ramsey} and independently Williamson \cite{williamson73}, deduce from a result of Raynaud \cite{raynaud1973circuit} that $\R(\dirpath{n}, 2) = 2n-3$ for $n \ge 3$. Bermond studied a related question for more colours; specifically, he considered the directed Ramsey number of a Hamiltonian graph (in one colour) vs.\ directed paths (of distinct lengths; in the remaining colours). He obtained a sharp bound for this Ramsey number, but his methods are not applicable, say, to the Ramsey number of directed paths. 





In this paper we determine the directed Ramsey number of trees, up to a constant factor.
\begin{thm}\label{thm:complete-trees}
    For every $k \ge 2$ there is a constant $c_k$, such that the following holds for every oriented tree $T$.     
    $$\R(T, k) \le c_k|T|^{k-1}.$$
\end{thm}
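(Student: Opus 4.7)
The plan is to argue by induction on $k$, with base case $k=2$ establishing the linear bound $\R(T,2) \le c_2|T|$, and inductive step reducing a $k$-coloring of $\dircomp{N}$ (with $N = c_k|T|^{k-1}$) to a $(k-1)$-coloring of $\dircomp{S}$ on a large induced subset $S$.

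For the inductive step, I would single out color $k$ and consider the undirected graph $H$ on $V(\dircomp{N})$ whose edges are pairs $\{x,y\}$ with at least one of $xy$, $yx$ colored $k$ (so $H$ is the underlying graph of the color-$k$ directed subgraph). If $\chi(H) \ge \tau$ for a suitable threshold $\tau = \tau(|T|)$, I would extract a color-$k$ copy of $T$ via a chromatic-number-implies-subtree argument for oriented trees. If $\chi(H) < \tau$, then $H$ has an independent set $S$ of size at least $N/\tau$; since no edge on $S$ is colored $k$, the induced $\dircomp{S}$ is $(k-1)$-coloured, and by induction $|S| \ge c_{k-1}|T|^{k-2}$ suffices to find a monochromatic copy of $T$. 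Balancing the two cases forces $\tau = O(|T|)$ for $c_k$ to remain constant in $|T|$.

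The main obstacle is obtaining such a threshold $\tau = O(|T|)$. Burr's conjecture would give $\tau = 2|T|-2$ but is open, and the best known unconditional chromatic threshold for arbitrary oriented trees is $\Theta(|T|^2)$ due to Addario-Berry, Havet, Sales, Reed and Thomass\'e. To overcome this, I would exploit the doubled structure of $\dircomp{N}$: on pairs where \emph{both} $xy$ and $yx$ are coloured $k$, any undirected embedding of the underlying tree of $T$ into this ``doubly-coloured'' subgraph yields a color-$k$ copy of $T$ with any desired orientations, so only the standard undirected fact that $\chi \ge |T|$ implies every $|T|$-vertex tree as a subgraph is needed. A two-step case split --- first on the chromatic number of the doubly color-$k$ subgraph, then on the residual (purely oriented) color-$k$ subgraph after excising the doubly-coloured pairs --- should push the effective threshold down to $O(|T|)$. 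The base case $k = 2$ is handled separately, for example by partitioning $V$ according to each vertex's majority out- and in-colors (four classes, one of size $\ge N/4$) and greedily embedding $T$ in the uniform dominant-color regime of the large class; this is the most delicate ingredient, since the linear bound $\R(T,2) \le c_2|T|$ cannot come from a naive chromatic reduction.
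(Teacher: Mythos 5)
The decisive gap is in your inductive step. To pass from $k$ colours to $k-1$ colours losing only a factor $O(|T|)$, you need the following statement: every oriented graph whose underlying chromatic number is at least $\tau=O(|T|)$ contains $T$ (or at least the weaker alternative ``contains $T$ or has an independent set of size $\Omega(N/|T|)$''). Your bidirected-edge trick only disposes of the pairs on which \emph{both} directions receive colour $k$: once you pass to a colour class of that auxiliary doubly-coloured graph, the colour-$k$ edges inside it form a completely general oriented graph, and for such graphs the best known chromatic threshold guaranteeing an arbitrary oriented tree $T$ is $\Theta(|T|^2)$ (Addario-Berry, Havet, Sales, Reed, Thomass\'e); the $O(|T|)$ threshold you need is precisely the weak form of Burr's conjecture, which is open, and even the independence-number alternative is open for arbitrary trees --- the paper proves it only for out-trees with a bounded number of leaves (Lemma \ref{lem:bounded-deg-tree-vs-indep-set}), which is exactly why its multicolour argument goes through cores. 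So the sentence ``a two-step case split should push the effective threshold down to $O(|T|)$'' is not backed by any argument, and with the known quadratic threshold your scheme only yields a bound of order $|T|^{2k-3}$, far from $|T|^{k-1}$. The paper's actual mechanism is different: after reducing to in-/out-directed trees via Lemma \ref{lem:reorientation}, it never deletes a colour wholesale; instead Claim \ref{claim:mindeg-core} produces, for two frequent colours $i$, large vertex sets in which some \emph{small} subtree $T_i'$ of $T_i$ (of order at most $\beta|T_i|$) is missing in colour $i$, and the induction runs on the triple $(k,l,\sum_i|T_i|)$, gaining the needed factor from shrinking two of the trees rather than from a chromatic reduction.

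Your base case $k=2$ is also essentially assumed rather than proved. Partitioning by majority out-/in-colour gives a class $C$ of size $\ge N/4$ in which every vertex has, say, red out- and in-degree at least $(N-1)/2$ \emph{in the whole graph}, but these neighbours need not lie in $C$; inside $C$ the red degrees can even be zero when $|C|=N/4$, and once you embed tree vertices outside $C$ you lose all degree control for extending the embedding. This is why the two-colour linear bound $\R(S,T)\le c(|S|+|T|)$ (Theorem \ref{thm:complete-trees-two}) occupies most of the paper's Section \ref{sec-compl}: it needs the DFS partition (Lemma \ref{lem:dfs}), the path-versus-tree theorem with the red-free cut argument (Theorem \ref{thm:dirpath-vs-outtree}), the induction on the number of leaves (Lemma \ref{lem:bounded-leaf-vs-any}), and the reorientation lemma. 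As it stands, both the base case and the inductive step of your proposal rest on unproven (and in the inductive step, essentially open) ingredients.
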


It is very interesting to note the different behaviour in comparison with the oriented case, where the Ramsey number of $T$ is $c_k|T|^k$. This is best illustrated by noticing that for two colours, the directed Ramsey number of a tree is linear in its order, while in the oriented setting it is quadratic. This difference prevents the usage of GHRV Theorem or of Burr's conjecture, even if it were true. Perhaps for this reason, our proof of \Cref{thm:complete-trees} is longer and more difficult than the proof of \Cref{thm:tournament-trees}, and requires additional ideas.

\subsection{Organisation of the paper}
In the following section, Section \ref{sec-col-tour}, we present our results for oriented Ramsey numbers; in particular, we prove Theorems \ref{thm:tournament-trees} and \ref{thm:tournament-path}. We turn our focus to directed Ramsey in Section \ref{sec-compl}, where we prove Theorem \ref{thm:complete-trees}; to do so, we built up on ideas that appear in Section \ref{sec-col-tour} but we also require new ingredients. We conclude the paper in Section \ref{sec-conc-rem} with some remarks and open problems.  

Throughout this paper, by a \emph{colouring} of a graph we mean an edge-colouring. Whenever we have a $k$-colouring, we denote the colours by $[k]=\{1,2,\ldots,k\}$, and when $k=2$ we call the colours red and blue. If a directed tree contains a bidirected edge, then both its oriented and directed Ramsey numbers, for $k \ge 2$, are infinite. Hence, throughout the paper, all the paths and trees are assumed to be oriented, i.e.\ they contain no bidirected edges. For a similar reason, we assume that  there are no loops. A \textit{directed path} is a path in which all edges follow the same direction, while by saying \textit{oriented path}, we stress that the edges are allowed to be oriented arbitrarily.

\section{Colouring Tournaments}\label{sec-col-tour}
In this section we focus on oriented Ramsey numbers; in particular, we prove Theorems \ref{thm:tournament-trees} and \ref{thm:tournament-path}. 

\subsection{Preliminaries}
We start by recalling the GHRV theorem, mentioned in the introduction.
\begin{thm}[Gallai \cite{gallai1968directed}, Hasse \cite{hasse1965algebraischen}, Roy \cite{roy1967nombre}, Vitaver \cite{vitaver1962determination}] \label{thm:ghrv}
	Every directed graph whose underlying graph has chromatic number at least $n$ contains a directed path of length $n-1$.
\end{thm}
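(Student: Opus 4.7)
The plan is to prove the contrapositive quantitatively: I will show that if the longest directed path in a directed graph $D$ has length $L$, then the underlying graph admits a proper vertex colouring with $L+1$ colours, so that $\chi \ge n$ forces $L \ge n-1$.

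The key construction is to select a maximal acyclic subdigraph $D' \subseteq D$ (i.e.\ a spanning subdigraph with no directed cycle, such that adding any further edge of $D$ creates a directed cycle). For each vertex $v$, define $c(v)$ to be the length of the longest directed path in $D'$ ending at $v$. Since $D'$ is acyclic, $c$ is well-defined, and its values lie in $\{0, 1, \dots, L\}$, because any directed path in $D' \subseteq D$ is in particular a directed path in $D$.

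The crux is verifying that $c$ is a proper colouring of the underlying graph of $D$. There are two cases for an edge $uv$ of $D$. If $uv \in D'$, then any maximal directed path in $D'$ ending at $u$ extends by one along $uv$, so $c(v) \ge c(u)+1 > c(u)$. If $uv \notin D'$, then by maximality of $D'$, adding $uv$ to $D'$ closes a directed cycle, which means $D'$ already contains a directed path from $v$ to $u$; concatenating with any longest directed path in $D'$ ending at $v$ yields $c(u) \ge c(v)+1 > c(v)$. Either way $c(u) \ne c(v)$, so $c$ is a proper colouring using at most $L+1$ colours.

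There is no real obstacle here, since the argument is short and self-contained; the only subtle point is the use of the maximal acyclic subdigraph to handle edges going ``the wrong way'', which is what makes the argument work for arbitrary directed graphs rather than just acyclic ones. Combining the two bounds gives $\chi(\text{underlying graph}) \le L+1$, which is exactly the claimed statement.
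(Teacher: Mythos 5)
The paper does not prove this statement at all -- it is quoted as the classical Gallai--Hasse--Roy--Vitaver theorem with citations to the original papers -- so there is no internal proof to compare against. Your argument is the standard proof of this theorem and it is correct: take an edge-maximal acyclic subdigraph $D'$, colour each vertex by the length of the longest directed path of $D'$ ending at it, and check properness edge by edge. One small point you leave implicit in both cases is that the paths you build are genuinely paths: in the first case the longest path ending at $u$ might a priori revisit $v$, and in the second case the longest path ending at $v$ and the path from $v$ to $u$ might a priori share an internal vertex; in either situation one would obtain a closed directed walk, hence a directed cycle, inside the acyclic $D'$, so both possibilities are ruled out. With that observation spelled out (or by instead noting that $c$ strictly increases along every edge of $D'$, hence along any directed path of $D'$), your proof is complete and self-contained.
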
 

We also recall El-Sahili's result regarding Sumner's conjecture.
\begin{thm}[El Sahili \cite{el2004trees}] \label{thm:el-sahili}
	Every tournament on $3n-3$ vertices contains every oriented tree of order $n$.
\end{thm}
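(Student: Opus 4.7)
The plan is to use the method of \emph{median orders}, which Havet and Thomass\'e showed to be a powerful tool for Sumner-type problems and which El Sahili extends. A median order of a tournament $\tau$ is a linear order $v_1, v_2, \ldots, v_N$ of $V(\tau)$ that maximises the number of forward arcs $v_iv_j$ with $i<j$. Two useful properties follow immediately from the extremal definition: any interval $v_i, v_{i+1}, \ldots, v_j$ is itself a median order of the sub-tournament it induces; and for each $i < j$, the vertex $v_i$ beats at least half of $\{v_{i+1}, \ldots, v_j\}$, with the symmetric statement for $v_j$. In particular every consecutive pair $v_iv_{i+1}$ is a forward arc.

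I would then proceed by induction on $n = |T|$, but strengthening the inductive statement so that the induction can actually close. Roughly, I would try to prove: \emph{in any median order of a tournament $\tau$ on $3n-3$ vertices, one can embed $T$ (with a designated root $r$) so that $\phi(r)$ lies among the first few vertices of the order and each subtree of $T$ occupies a controlled interval}. For the inductive step, remove a carefully chosen leaf $\ell$ with parent $p$, apply the induction hypothesis to $T - \ell$ using approximately $3(n-1)-3 = 3n-6$ of the vertices at one end of the median order, and then look for the image of $\ell$ among the three ``fresh'' vertices left over at the other end. The half-domination property of the median order guarantees that $\phi(p)$ dominates (resp.\ is dominated by) enough vertices later in the order to accommodate $\ell$ with the correct arc direction.

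The main obstacle will be the right choice of the strengthened inductive statement and of which leaf to remove. A naive induction faces two genuine difficulties: leaves attached by an in-arc and leaves attached by an out-arc have to be handled on an equal footing, and the image of $p$ must not be pushed so far towards one end of the median order that insufficient vertices of the required orientation remain. The classical fix, which I would attempt to implement, is to maintain an invariant that pins down the positions of $\phi(r)$ and of a few critical ancestors of $\ell$ in the median order, and to remove a leaf chosen according to the ``type'' of the edge incident to it; El Sahili's contribution, as I understand it, is precisely to package this bookkeeping tightly enough to bring the bound down from the $(7n-3)$ of Havet--Thomass\'e to $3n-3$. Lower-order adjustments, for instance handling small trees or degenerate cases when $T$ is a star, would be settled directly after the main induction.
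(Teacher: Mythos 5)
This statement is not proved in the paper at all: Theorem~\ref{thm:el-sahili} is El Sahili's theorem, quoted from the literature and used as a black box (it serves as the one-colour base case in the proofs of Theorems~\ref{thm:tournament-path} and \ref{thm:tournament-trees}). So the only meaningful comparison is with El Sahili's published argument, and there your proposal identifies the right tool --- median orders, whose interval and half-domination properties you state correctly --- but it is a plan rather than a proof. The entire technical substance is deferred: you never specify the strengthened inductive statement, the invariant pinning down the images of the root and the critical ancestors, or the rule for selecting which leaf to remove, and you yourself flag these choices as ``the main obstacle''. Reproducing the name of the method and the shape of an induction, while leaving exactly the part where all known proofs concentrate their effort, is a genuine gap, not a lower-order detail to be ``settled directly''.

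Moreover, the one concrete mechanism you do sketch would fail as stated. If you embed $T-\ell$ in $3(n-1)-3=3n-6$ vertices at one end of the median order and hope to place $\ell$ among the three leftover vertices at the other end, nothing forces the image $\phi(p)$ of the parent to have even a single neighbour of the required orientation inside that specific $3$-element set: the half-domination property only controls how $v_i$ relates to \emph{intervals} beginning immediately after it, and $\phi(p)$ may lie deep inside the used prefix with all three reserved vertices oriented the wrong way towards it. This is precisely why the actual proof does not proceed leaf by leaf with a constant-size reserve; El Sahili's argument (building on Havet--Thomass\'e) embeds a carefully chosen subtree along the median order while keeping a \emph{linear} supply of later vertices, and absorbs the remaining leaves using the feed/counting properties of the order --- i.e., exactly the bookkeeping your sketch leaves unspecified. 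As it stands, the proposal is a reasonable description of where a proof might come from, but it does not establish the $3n-3$ bound.
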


In order to enable inductive arguments, we need to work with asymmetric Ramsey numbers. To this end we define $\RT(G_1,G_2, \ldots, G_k)$ to be the least $n$ such that in any tournament of order $n$ whose edges are $k$-coloured, there is an $i$ such that we can find a copy of $G_i$ in the $i$-th colour.

Let $\leaf(T)$ denote the number of leaves of a tree $T$. We shall need the following definition.
\begin{defn} \label{def:core}
	Given a rooted oriented\footnote{In fact, this definition will be independent of the orientation of the edges.} tree $T$ we define the $k$\textit{-core} of $T$ to be the subtree $T'$ of $T$ consisting of all vertices with more than $\frac{1}{k}|T|$ descendants. Note that $T'$ can have at most $k$ leaves, as each non-root leaf has more than $\frac{1}{k}|T|$ descendants and these sets of descendants are mutually disjoint, so there are at most $k-1$ such leaves. Including the possibility of the root being a leaf, there are at most $k$ leaves in total. 
\end{defn}

\begin{figure}[h]
        \caption{A $5$-core of a tree on $19$ vertices.}
        \includegraphics{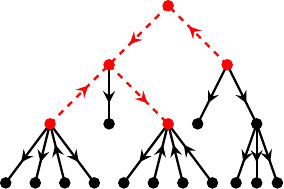}
        \label{fig:core}
\end{figure}

The following definition and a simple lemma, first proved in \cite{burr82}, will be very useful for the study of both the oriented and the directed Ramsey numbers.
\begin{defn}
	Given a graph $G$ and two disjoint subsets of vertices $X$ and $Y$, we say that the pair $(X,Y)$ is a $k$-\textit{mindegree pair of $G$} if every vertex in $X$ has at least $k$ out-neighbours in $Y$ and every vertex in $Y$ has at least $k$ in-neighbours in $X$. 
\end{defn}

\begin{lem} \label{lem:split}
    Let $G$ be a directed graph on $n$ vertices with at least $\eps \binom{n}{2}$ edges. Then $G$ contains an $\frac{\eps n}{8}$-mindegree pair.
\end{lem}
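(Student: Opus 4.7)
The plan is a two-step argument: first extract a dense bipartite subdigraph from $G$ using a random vertex partition, then iteratively prune low-degree vertices until the mindegree condition is enforced on the remaining graph.

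First I would take a uniformly random \emph{balanced} bipartition $(A,B)$ of $V(G)$, with $|A|=\lfloor n/2\rfloor$ and $|B|=\lceil n/2\rceil$, so that $|A|\,|B|\ge n^2/4$ (taking $n$ even; the odd case is the same up to negligible rounding). For any directed edge $(u,v)\in E(G)$ one has $\pr(u\in A,\,v\in B)=|A|\,|B|/(n(n-1))$, so writing $e(A,B)$ for the number of edges directed from $A$ to $B$, linearity of expectation yields
\[
    \mathbb{E}\bigl[e(A,B)\bigr] \;\ge\; \eps\binom{n}{2}\cdot\frac{|A|\,|B|}{n(n-1)} \;=\; \frac{\eps\,|A|\,|B|}{2} \;\ge\; \frac{\eps n^2}{8}.
\]
Fix a bipartition realising this bound. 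Using a balanced partition rather than the simpler independent-coin-flip one (which would only give $\eps n(n-1)/8$ in expectation) matters for the cleaning step that follows.

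Next I would clean iteratively: while there exists some $a\in A$ with fewer than $\eps n/8$ out-neighbours in the current $B$, or some $b\in B$ with fewer than $\eps n/8$ in-neighbours in the current $A$, remove the offending vertex. Let $X\subseteq A$ and $Y\subseteq B$ denote the survivors; by the stopping condition, as soon as both are nonempty, $(X,Y)$ is an $(\eps n/8)$-mindegree pair, as required.

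The only remaining point, and the one place where the bookkeeping is at all delicate, is to check that the cleaning cannot destroy every edge. Each deletion removes strictly fewer than $\eps n/8$ edges of the $A\to B$ bipartite subdigraph, and at most $n$ vertices are ever removed, so the total number of destroyed edges is strictly less than $\eps n^2/8\le e(A,B)$. Hence at least one edge survives, both $X$ and $Y$ are nonempty, and the lemma follows. The slack between what the random bipartition secures and what the cleaning may spend is only of order $\eps n$, which is exactly why the balanced partition is needed up front.
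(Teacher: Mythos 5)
Your argument is correct and is essentially the paper's proof: a random bipartition capturing a constant fraction of the edges, followed by iterative deletion of vertices with fewer than $\eps n/8$ neighbours on the other side, with an edge count showing both sides survive. One quibble: the balanced partition is not what saves the bookkeeping -- with independent coin flips one still gets $e(A,B)\ge \eps n(n-1)/8$, and the cleaning goes through because at most $n-1$ deletions can destroy any edges (once one side is empty there is nothing left to remove edges from); this same observation is also what you need to make your odd-$n$ case rigorous, since there $e(A,B)\ge \eps(n^2-1)/8$ falls just short of the $\eps n^2/8$ your literal accounting uses.
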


\begin{proof}
    Given two sets $A$ and $B$, we denote the number of edges directed from $A$ to $B$ by $e(A,B)$.
    Let $\{A, B\}$ be a partition of the vertices, obtained by putting each vertex in $A$ independently with probability $1/2$ (and putting it in $B$ otherwise). It is easy to see that the expected number of edges from $A$ to $B$ is $e(G)/4$. Hence, there is a choice of a partition $\{A, B\}$ where $e(A, B) \ge e(G)/4 \ge \eps n(n-1)/8$.
    
    We now consider the bipartite graph that consists of edges of $G$ going from $A$ to $B$, from which we remove orientations. 
    As long as we can, we remove a vertex of degree less than $\frac{\eps n}{8}$ in the current subgraph.
    Denote the sets of vertices remaining in $A$ and $B$ by $X$ and $Y$, respectively. We note that less than $(n-1) \cdot \frac{\eps n}{8} \le e(A,B)$ edges were removed in this process (since after $n-1$ steps we are left with only a single vertex, so no edges), hence both $X$ and $Y$ are non-empty. It follows that $(X, Y)$ is an $\frac{\eps n}{8}$-mindegree pair. 
\end{proof}

Let $\overrightarrow{P}_{n_1, \ldots, n_t}$ denote an oriented path consisting of $t$ blocks of maximal directed subpaths, the $i$-th of which has order $n_i\ge 2$ (see Figure \ref{fig:dirpath}). The following lemma is one of the main tools that we use in our proofs of Theorems \ref{thm:tournament-trees} and \ref{thm:tournament-path} (recall that $l(P)$ is the length of the longest directed subpath of $P$). 
\begin{lem}\label{lem:path-vs-indep}
    Let $G$ be an oriented graph and let $P$ be an oriented path of length $n$, with $l=l(P)$. If $G$ has a $k$-mindegree pair, then either $P$ is a subgraph of $G$, or $G$ contains an independent set of size at least $\frac{k-n}{l}$.
\end{lem}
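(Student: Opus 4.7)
The plan is to embed $P$ greedily into $G$, using the $k$-mindegree pair $(X, Y)$ to anchor block boundary vertices of $P$ and to traverse directed subpaths inside each block; a failure of the embedding procedure will yield an independent set of the required size.

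First I decompose $P$ into its maximal directed subpaths, the blocks $B_1, B_2, \ldots, B_m$, of lengths $l_1, l_2, \ldots, l_m$ summing to $n$, each of length at most $l$. Let $u_0, u_1, \ldots, u_m$ denote the block boundaries along $P$. For $0 < i < m$, the vertex $u_i$ is either a local source of $P$ (both adjacent $P$-edges go out) or a local sink (both come in), alternating with $i$, because directions must flip between consecutive blocks. The strategy is to embed each local source into $X$ and each local sink into $Y$, processing blocks in order; within each block, we extend along a directed path of the appropriate length inside $G$, avoiding the at most $n$ vertices already used by the partial embedding. This assignment is consistent with the mindegree pair, because at each boundary we will use an $X$-to-$Y$ edge of the pair to start (or finish) the next block, and the mindegree condition provides many candidates for the boundary image.

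For a single block $B_i$, suppose for concreteness that $f(u_{i-1}) \in X$ and $B_i$ is a forward directed subpath from $u_{i-1}$ to $u_i$ of length $l_i$. I need to build a directed path of length $l_i$ in $G$ starting at $f(u_{i-1})$ and ending in $Y$, avoiding used vertices. I maintain, step by step, the set $S_j$ of possible endpoints of a directed subpath of length $j$ in $G$ starting at $f(u_{i-1})$ (after excluding used vertices). Initially $|S_1| \ge k - n$ by the mindegree assumption applied to $f(u_{i-1})$. If at every intermediate step the candidate set is nonempty and the final set $S_{l_i}$ contains a vertex of $Y$, we complete the block; the new boundary $u_i$ is placed there and we continue with $B_{i+1}$.

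The main obstacle is handling the case where the forward expansion fails at some step $j < l_i$, i.e.\ where the reachable set cannot be extended to $Y$ within the remaining $l_i - j$ steps. In that case I will argue that a collection of ``stuck'' vertices in $S_j$ must be pairwise non-adjacent: any edge among them would produce a longer directed subpath prolonging the embedding. Counting carefully, since at most $n$ vertices are used and the block has at most $l$ steps, the shortfall can be converted into an independent set of size at least $(k - n)/l$ by distributing the deficit across the $\le l$ levels and invoking the GHRV theorem on an auxiliary subgraph (the ``un-reachable'' part of $G$) to convert a low chromatic number into the required independent set. Making this extraction precise, and in particular verifying that the candidates selected at distinct levels remain mutually non-adjacent, is the technical heart of the argument.
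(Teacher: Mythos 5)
There is a genuine gap: the step that turns a failed embedding into an independent set of size $\frac{k-n}{l}$ is exactly the content of the lemma, and your proposal only gestures at it. The mechanism you sketch does not work as stated. First, the claim that ``stuck'' vertices at a level must be pairwise non-adjacent is unjustified: two endpoints of length-$j$ directed paths from $f(u_{i-1})$ can perfectly well be joined by an edge without this letting you complete the block, because the block must be finished within the prescribed length budget \emph{and} its last vertex must land in $Y$ (or $X$); an edge between two level-$j$ vertices extends some path by one but does nothing towards either constraint, and verifying non-adjacency ``across levels'' is precisely what you defer. Second, your level sets $S_j$ for $j\ge 2$ are unconstrained: the mindegree hypothesis only controls edges from $X$ to $Y$, so after the first step the reachable vertices need not lie in $Y$, their out-degrees are not bounded below, and there is no reason $S_{l_i}$ should meet $Y$ at all. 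So both the growth of the levels and the ``ending in $Y$'' requirement are unsupported, and the final appeal to GHRV on an unspecified ``un-reachable part'' does not identify a subgraph with no directed path of length $l$.

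The fix is to keep each block entirely inside one side of the pair, which is what the paper does. If $u_i$ is embedded in $X$, consider the set $S$ of its still-unused out-neighbours in $Y$; the mindegree condition gives $|S|>k-n$. Now argue by contradiction (or directly at the point of failure): if $G$ has no independent set of size $\frac{k-n}{l}$, then $\chi(G[S]) > |S|\big/\frac{k-n}{l} > l$... wait, rather $\chi(G[S])\ge |S|\big/\frac{k-n}{l}>l$, so by Theorem \ref{thm:ghrv} $G[S]$ contains a directed path of length $l$, hence one of length $n_{i+1}-1\le l$, and attaching $u_i$ to its first vertex embeds the whole next block inside $Y$; its last vertex is automatically in $Y$, so the mindegree condition is again available at the next boundary, and the induction closes. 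Equivalently, if at some block the greedy embedding inside $S$ fails, then $G[S]$ has no directed path of length $n_{i+1}-1\le l$, so by GHRV $\chi(G[S])\le l$ and a colour class gives an independent set of size at least $|S|/l\ge\frac{k-n}{l}$. Your per-level bookkeeping is unnecessary once the block is confined to the neighbourhood in $Y$ (resp.\ $X$) and GHRV is applied to that whole neighbourhood rather than level by level.
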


\begin{proof}
    Assume, for the sake of contradiction, that $G$ contains no independent set of size at least $\frac{k-n}{l}$. For every subset $S$ of more than $k-n$ vertices of $G$, such that the chromatic number of the induced subgraph $G[S]$ is at least $|S|/ \frac{k-n}{l} > l,$ Theorem \ref{thm:ghrv} implies that  $G[S]$ contains a directed path of length $l$.
    
    Let $P=\overrightarrow{P}_{n_1, \ldots, n_t}$; without loss of generality, we assume that the first edge of $P$ is directed away from the first vertex of $P$. 
    
    \begin{figure}[h]
        \caption{$\protect\overrightarrow{P}_{4,3,4}$.}
        \includegraphics{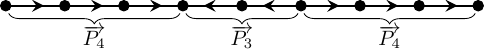}
        \label{fig:dirpath}
	\end{figure}
    
    Let $(X, Y)$ be a $k$-mindegree pair in $G$, and let $u_0 \in X$. Since $u_0$ has at least $k$ out-neighbours in $Y$ and $n_1-1 \le l$, we can find a $\dirpath{n_1-1}$ within the out-neighbourhood of $u_0$ in $Y$, which together with $u_0$ forms a $\dirpath{n_1}$. Let $u_1$ be the last vertex of this path. 
    Continuing this process for $i$ steps, suppose that we already embedded $\overrightarrow{P}_{n_1, \ldots, n_{i}}$ such that the last vertex $u_i$ is in $X$ if $i$ is even, and in $Y$ if $i$ is odd. We assume that $i$ is even, so $u_i \in X$; the case where $i$ is odd can be treated similarly. 
    Let $S$ be the set of out-neighbours of $u_i$ in $Y$ that do not belong to $\overrightarrow{P}_{n_1, \ldots, n_{i}}$. Then, by the $k$-mindegree pair assumption $|S| > k-n$, as at most $n-1$ vertices of $Y$ belong to $\overrightarrow{P}_{n_1, \ldots, n_{i}}$.
    Thus, as explained above, $S$ contains a $\dirpath{n_{i+1}-1}$. By connecting $u_i$ to the first vertex of this path we obtain a copy of $\overrightarrow{P}_{n_1,\ldots,n_{i+1}}$ whose last vertex is in $Y$ (see Figure \ref{fig:left-right} for an illustration of this process). By continuing this process until $i = t$ we obtain a copy of $P$, as required. 
    \end{proof}
\begin{figure}[h]
	\caption{The setting of the above argument.}
	\includegraphics{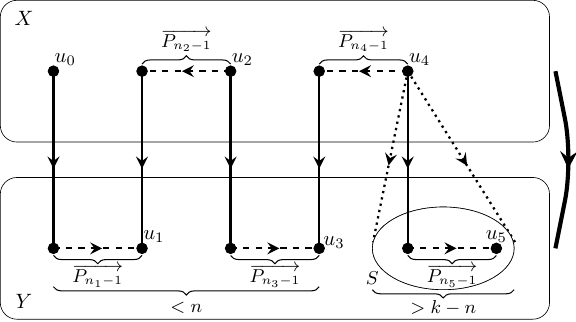}
	\label{fig:left-right}
\end{figure}

\subsection{Oriented paths}
In this subsection we prove Theorem \ref{thm:tournament-path}.
\thmpath*
\begin{proof}    
    We start with the lower bound. 
    Let $T$ be the tournament on vertex set $[l]^{k-1} \times [n]$ with edges oriented according to the lexicographic order, i.e.~if $x = (x_i)_{i \in [k]}$ and $y = (y_i)_{i \in [k]}$ are distinct vertices, and $i$ is the first coordinate in which they differ, then $xy$ is an edge in $T$ if and only if $x_i < y_i$ (so, $T$ is a transitive tournament). We colour an edge $xy$ with colour $i \in [k]$ if the first coordinate in which $x$ and $y$ differ is $i$. Note that edges of colour $k$ form a disjoint union of $n$-vertex tournaments, hence there is no copy of $P$ in colour $k$.
    Moreover, given a directed path whose edges have colour $i < k$ then the $i$-th coordinates of the vertices of the path are increasing, hence there is no $i$-coloured directed path of length $l$. It follows that there is no monochromatic copy of $P$.

    \begin{figure}[h]
        \caption{A depiction of the above example for $k=3,l=2,n=3$.}
        \includegraphics{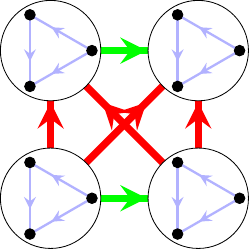}
        \label{fig:example}
    \end{figure}
    
    For the upper bound, we proceed by induction on $k$, proving the result with $a_k = (8^{k}-2) \cdot k!$ in place of $c_k$. If $k=1$, the statement follows from Theorem \ref{thm:el-sahili} (and in fact from an earlier, slightly better result which only applies for paths in \cite{reid83}). Assuming $k \ge 2$ and that the statement holds for $k-1$ we proceed.
    
    Let $m = a_k \cdot n \cdot l^{k-1},$ and let $T$ be a $k$-coloured $m$-vertex tournament.
    Let $c$ be the majority colour, and let $T_c$ be the subgraph of $T$ consisting of edges in $T$ whose colour is $c$. Then $e(T_c) \ge \frac{1}{k}\binom{m}{2}$. 
    By Lemma \ref{lem:split}, with $\eps= \frac{1}{k}$, $T_c$ contains an $\frac{m}{8k}$-mindegree pair.
    
    By Lemma \ref{lem:path-vs-indep}, $T_c$ either has a copy of $P$, in which case we are done, or it contains an independent set of size at least
    $$\frac{\frac{m}{8k}-n}{l} \, \ge \, 
    \frac{\frac{m}{8k} - l^{k-1}n}{l} \, = \, 
    \left(\frac{a_k}{8k} - 1\right)l^{k-2}n \, \ge \,
    a_{k-1} l^{k-2}n.$$
    This independent set corresponds to a subtournament $T'$ of $T$ whose edges avoid colour $c$, so they use only $k-1$ colours.
    Hence, by induction, $T'$ contains a monochromatic copy of $P$, as required.        
\end{proof}

Our argument easily extends to prove the following asymmetric version of Theorem \ref{thm:tournament-path}.
\begin{thm} \label{thm:tournament-path-m}
    Let $P_1, \ldots, P_k$ be oriented paths, and denote $l_i = l(P_i)$ and $L_i = |P_i| - 1$. Then the following holds, where $c_k > 0$ is a constant that depends only on $k$.
    $$l_1 \cdot \ldots \cdot l_k \cdot \max_{i \in [k]} \left\{ \frac{L_i}{l_i} \right\}
    \le \RT(P_1,\ldots, P_k) 
    \le c_k \cdot l_1 \cdot \ldots \cdot l_k \cdot \max_{i \in [k]}\left\{ \frac{L_i}{l_i} \right\}.$$    
\end{thm}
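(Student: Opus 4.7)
The plan is to extend the proof of \Cref{thm:tournament-path} to the asymmetric setting: the lower bound comes from a lexicographic construction, and the upper bound from an induction on $k$ using \Cref{lem:split} and \Cref{lem:path-vs-indep}, with some mild bookkeeping to handle the asymmetry.

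For the lower bound, after relabelling the colours, I would assume that $\max_{i\in[k]} L_i/l_i$ is attained at $i=k$. Consider the transitive tournament on the vertex set $[l_1]\times\cdots\times[l_{k-1}]\times[L_k]$, with edges oriented by the lexicographic order, and colour each edge by the first coordinate in which its endpoints differ. The number of vertices is $l_1\cdots l_{k-1}\cdot L_k = l_1\cdots l_k\cdot\max_i L_i/l_i$, matching the target. For $i<k$, any directed path in colour $i$ is strictly increasing in the $i$-th coordinate, so uses at most $l_i$ vertices; since $P_i$ contains a directed subpath on $l_i+1$ vertices, it does not embed in colour $i$. For colour $k$, the monochromatic subgraph is a disjoint union of transitive tournaments of order $L_k$, each too small to contain $P_k$ as $|P_k|=L_k+1$.

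For the upper bound, I would prove $\RT(P_1,\ldots,P_k)\le c_k R$ by induction on $k$, where $R := l_1\cdots l_k\cdot\max_i L_i/l_i$. The base case $k=1$ follows from \Cref{thm:el-sahili} with $c_1=3$. For the inductive step, given a $k$-colouring of a tournament on $m\ge c_k R$ vertices, take the majority colour $j$ (density $\ge 1/k$), apply \Cref{lem:split} with $\eps=1/k$ to locate an $m/(8k)$-mindegree pair in colour $j$, and invoke \Cref{lem:path-vs-indep}: either a monochromatic copy of $P_j$ is already present, or there is an independent set in colour $j$ of size at least $(m/(8k)-L_j)/l_j$, which spans a subtournament whose edges use only the remaining $k-1$ colours, where induction applies to $\{P_i : i\ne j\}$.

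The only bookkeeping is to pick $c_k$ so the induction closes uniformly in the index $j$ of the majority colour. Since $\max_{i\ne j}L_i/l_i\le\max_i L_i/l_i$, the $(k-1)$-colour target is at most $c_{k-1}\prod_{i\ne j}l_i\cdot\max_i L_i/l_i = c_{k-1}R/l_j$, so it suffices to require $m/(8k)-L_j\ge c_{k-1}R$; using $L_j\le l_j\max_i L_i/l_i\le R$, this is achieved by $c_k = 8k(c_{k-1}+1)$. I do not foresee a genuine obstacle: the construction is the natural lexicographic one, and the inductive step is essentially identical to the symmetric case, the only novelty being that we must track which $P_j$ is found in the majority colour and verify that dropping index $j$ does not inflate the recursive target.
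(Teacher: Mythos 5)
Your proposal is correct and is exactly the extension the paper has in mind: the paper gives no separate proof of this statement, merely asserting that the argument for \Cref{thm:tournament-path} "easily extends", and your lexicographic construction on $[l_1]\times\cdots\times[l_{k-1}]\times[L_k]$ together with the induction via \Cref{lem:split} and \Cref{lem:path-vs-indep} (dropping the majority colour and noting $\max_{i\ne j}L_i/l_i\le\max_i L_i/l_i$) is precisely that extension. The bookkeeping, including $c_1=3$ from \Cref{thm:el-sahili} and the recursion $c_k=8k(c_{k-1}+1)$, checks out.
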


\subsection{Trees}
In this subsection we prove Theorem \ref{thm:tournament-trees}. 
We shall prove the following lemma; taking $l = 0$ and $T_1 = \ldots = T_k = T$, Theorem \ref{thm:tournament-trees} follows.
\begin{lem}\label{lem:reduced-one}
	Let $0 \le l \le k$. There exists a function $f(k, l)$ such that the following holds for any collection of oriented trees $T_1,\ldots, T_k$,  
    \begin{equation}\label{eq:ind-ass}
    \RT(T_1, \ldots, T_k) \le f_{k,l} \big( \leaf(T_1), \ldots, \leaf(T_{l}) \big) \cdot |T_1| \cdots |T_k|.
    \end{equation}
    
\end{lem}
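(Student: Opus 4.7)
The plan is to prove the lemma by nested induction: an outer induction on $k$, and for each $k$, an inner decreasing induction on $l$ from $l = k$ down to $l = 0$. The outer base case $k = 1$ is immediate from \Cref{thm:el-sahili}, with $f_{1,l}$ taken to be the constant $3$. The point of keeping $l$ flexible is that when we invoke the inductive hypothesis we can simultaneously control some leaf counts and forget others, which is what makes the $k$-core construction (\Cref{def:core}) useful.

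For the inner base case $l = k$ (with $k \ge 2$), I plan to follow the pattern of the proof of \Cref{thm:tournament-path}: pick a majority colour $c$ in the given $k$-coloured tournament, apply \Cref{lem:split} to extract a mindegree pair $(X,Y)$ in colour $c$ of size $\sim N/k$, and attempt to embed $T_c$ in colour $c$ using this pair. The new feature compared to paths is that $T_c$ is an arbitrary tree, so I would decompose it into its $k$-core $T_c'$ (which has at most $k$ leaves) and the pendant subtrees hanging off it (each of size at most $|T_c|/k$). The core $T_c'$ is essentially a bounded union of directed paths, so it can be embedded by a tree analogue of the left-right argument in \Cref{lem:path-vs-indep}, applying \Cref{thm:ghrv} to find the necessary directed sub-paths whenever the colour-$c$ chromatic number is large enough. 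Each pendant subtree can then be attached by invoking \Cref{thm:el-sahili} inside the colour-$c$ in- or out-neighbourhood of its core attachment point. If at some step the colour-$c$ chromatic number is too low to run this process, a large colour-$c$-independent subtournament emerges; that subtournament is $(k-1)$-coloured, and the outer inductive hypothesis finishes the case, giving the value of $f_{k,k}$.

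For the inner step $l+1 \to l$, I would apply the inner inductive hypothesis for $l+1$ to the tuple $(T_1, \ldots, T_l, T_{l+1}', T_{l+2}, \ldots, T_k)$, where $T_{l+1}'$ is the $k$-core of $T_{l+1}$. Since $\leaf(T_{l+1}') \le k$ and $|T_{l+1}'| \le |T_{l+1}|$, the resulting bound depends only on $\leaf(T_1), \ldots, \leaf(T_l)$ and $k$, which is precisely the shape of $f_{k,l}$. In a tournament slightly larger than this bound (by a factor depending only on $k$), the inductive hypothesis yields either a monochromatic $T_i$ for some $i \ne l+1$ (and we are done) or a colour-$(l+1)$ copy of $T_{l+1}'$, which must then be extended to a colour-$(l+1)$ copy of $T_{l+1}$ by attaching the pendant subtrees. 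The main obstacle I expect is exactly this extension step: each embedded core vertex must have enough colour-$(l+1)$ reserve in the right direction to accommodate all of its pendant subtrees simultaneously. My plan to handle this is to pre-select a dense colour-$(l+1)$ subregion (via a suitable iteration of \Cref{lem:split}) before invoking the inductive bound, so that the embedded core automatically lies in a region where every vertex has large colour-$(l+1)$ in- and out-degrees; each pendant subtree (of size $\le |T_{l+1}|/k$) can then be embedded disjointly in that reserve using \Cref{thm:el-sahili}. Absorbing the resulting multiplicative overhead into the constant defines $f_{k,l}$ from $f_{k,l+1}$.
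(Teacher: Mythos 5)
The central gap is exactly the step you flag but do not resolve: with the tools you cite you cannot arrange that the embedded core ``lies in a region where every vertex has large colour-$(l+1)$ in- and out-degrees''. \Cref{lem:split}, and any iteration of it, only produces \emph{one-sided} mindegree pairs: vertices of $X$ get colour-$c$ out-reserve in $Y$, vertices of $Y$ get colour-$c$ in-reserve in $X$, and no vertex gets both. Since a core vertex of an arbitrary tree typically has both in- and out-pendant subtrees (and the core itself has vertices incident to both in- and out-edges), a single pair can host neither the extension step in your $l+1\to l$ argument nor the ``tree analogue of the left-right argument'' in your $l=k$ base case; the left-right scheme of \Cref{lem:path-vs-indep} works for paths precisely because each vertex only ever needs reserve on one side. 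Worse, a two-sided structure for the \emph{prescribed} colour $l+1$ may simply not exist: split the vertices into $A\cup B$, orient all edges from $A$ to $B$ and give them colour $l+1$, and colour the edges inside $A$ and inside $B$ with the other colours. Then no vertex has both a colour-$(l+1)$ in-neighbour and a colour-$(l+1)$ out-neighbour, so no pre-selection can succeed, and your inner step has no fallback in this situation -- your only escape (a monochromatic $T_i$ with $i\neq l+1$) is produced by applying the inductive bound \emph{inside} the pre-selected region, which you never obtain. The colour for which one can secure two-sided reserve cannot be chosen in advance.

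This is precisely what the paper's proof is built around. It runs an iterated majority-colour/mindegree process and, by pigeonhole, ends in one of two cases. If some \emph{untracked} colour is the majority twice (Case 2), the two nested pairs give three sets $X,Y,Z$ in which every vertex of $Y$ has large in-reserve in $X$ and large out-reserve in $Z$ in that colour; only then is the core-plus-pendants extension performed, and only for the colour the process hands you. If instead the process reaches a \emph{tracked} colour (Case 1), the argument is entirely different: one peels a leaf-path off the tracked tree, embeds the smaller tree by induction inside $X_i$, and attaches the path using the one-sided pair (with the two-leaf case handled by \Cref{lem:path-vs-indep}); this forces the induction to be over lexicographic triples including the total leaf count. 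Your proposal has no analogue of this leaf-peeling case, and your induction (outer on $k$, inner decreasing on $l$) tracks no such quantity, so the gap is not an overhead to be absorbed into constants but the main mechanism of the proof.
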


\begin{proof}
    Let $a = a_k = (8k)^k$; we will prove the statement of Lemma \ref{lem:reduced-one} with $f_{k,l}(x_1, \ldots, x_l)=(2a)^{x_1+\ldots+x_l+(k-l)(2a+1)}.$
    The proof is by induction over triples $\big(k, k-l, \leaf(T_1) + \ldots + \leaf(T_{l})+|T_{l+1}|+ \ldots +|T_k| \big)$ ordered lexicographically. We denote by $A_{k,l}(T_1,\ldots,T_k)$ the inductive claim (\ref{eq:ind-ass}), with parameters $k,l,T_i$. 
        
    As the basis, we note that for $k=1$ the result follows from Theorem \ref{thm:el-sahili} which gives $\RT(T_1) \le 3|T_1|$. We now assume $k\ge 2$. For the step of the induction we will prove $A_{k,l}(T_1,\ldots,T_k)$ while assuming that the following inequalities holds: $A_{k-1,l'}(T_1',\ldots,T_{k-1}')$ for any $l' \le k-1$ and any trees $T_i'$; $A_{k,l+1}(T_1',\ldots,T_{k}')$ for any trees $T_i',$ unless $l = k$; and $A_{k,l}(T_1',\ldots,T_k')$ for any trees $T_i'$ satisfying $\leaf(T_1') + \ldots + \leaf(T_{l}') + |T_{l+1}'|+ \ldots +|T_k'| < \leaf(T_1) + \ldots + \leaf(T_{l}) + |T_{l+1}| + \ldots + |T_k|$.
    
    
    We note that when any $T_i$ is a single vertex the claim is trivial, so we assume $\leaf(T_i) \ge 2$ for all $i$. 
    
    Let $n= f_{k,l} \big( \leaf(T_1), \ldots, \leaf(T_{l}) \big) \cdot |T_1| \cdots |T_k|$ and let us assume, for the sake of contradiction, that $T$ is a tournament on $n$ vertices, whose edges are $k$-coloured such that there is no $T_i$ in colour $i$ for any $i$.
         
	Consider the following process, which finds a sequence of colours $c_i$ and a sequence of pairs $(X_i, Y_i)$ of sets of vertices (for convenience, let $X_0$ denote the set of vertices in the tournament).
	Suppose that $c_j$ and $(X_j, Y_j)$ have been defined for $j < i$. Let $c_i$ be a majority colour in the tournament induced by $X_{i-1}$; by Lemma \ref{lem:split}, we may pick a $\frac{|X_{i-1}|}{8k}$-mindegree pair $(X_i, Y_i)$ in the subgraph of $X_{i-1}$ consisting of all edges in colour $c_i$. If $c_i \le l$, or if $c_i = c_j$ for some $j < i$, we stop. Note that this process will go on for at most $k-l+1$ iterations; denote by $i$ the number of steps taken until the process was stopped. Additionally, note that $|X_j| \ge \frac{n}{(8k)^j}$ for $j \le i$.
	If $c_i \le l$, we may assume, without loss of generality, that $c_i = 1$, which leads us to Case 1 below. Otherwise, we have $c_i = c_j > l$, for some $j < i$; without loss of generality, $c_i = k$, as in Case 2 below.
    \vspace{-0.3cm}
    \subsubsection*{Case 1. we found an $\frac{n}{(8k)^k}$-mindegree pair $(X_i, Y_i)$ for colour $1$.} 
        Recall that $a=(8k)^k$, so $(X_i,Y_i)$ is an $\frac{n}{a}$-mindegree pair.
    
        If $\leaf(T_1) > 2$, let $v$ be a leaf of $T_1$ and let $u$ be the closest vertex to $v$ which has degree at least $3$ in the underlying graph. In particular, $v$ is joined to $u$ by a path $P$; let $T_1'$ be the subtree obtained from $T_1$ by removing the vertices of $P$, except for $u$. Then  $\leaf(T_1') = \leaf(T_1)-1$, as $u$ does not become a leaf upon removal of $P - u$. Without loss of generality, let us assume that the first edge (looking from $u$ to $v$) of $P$ is directed away from $u$.
                 
	We have 
	\begin{align*}
        		|X_i| \ge \frac{n}{a} 
        		& = 2 \cdot (2a)^{\leaf(T_1) - 1 + \ldots + \leaf(T_l) + (k-l)(2a+1)} |T_1| \cdots  |T_k|  \\
	        	& > (2a)^{\leaf(T_1') + \ldots + \leaf(T_l) + (k-l)(2a+1)} |T_1'| \cdots  |T_k|.
	\end{align*}
	Hence, using $A_{k,l}(T_1',T_2\ldots,T_k)$ and our assumption that there is no copy of $T_i$ in colour $i,$ for $i > 1$, there is a copy of $T_1'$ in colour $1$ within $X_i$.

	Let $u'\in X_i$ be the vertex corresponding to $u$ in this embedding of $T_1'$, then $u'$ has at least $\frac{n}{a}$ out-neighbours in $Y_i$. So applying $A_{k,l}(P-u, T_2,\ldots,T_k)$ within this neighbourhood, as there is no $T_i$ in colour $i$, we can find $P-u$ in colour $1$ within this neighbourhood (see Figure \ref{fig:X-Y}). The inductive assumption applies as $\leaf(P-u)=2<\leaf(T_1)$ and $\frac{n}{a} \ge f_{k,l}\left(2,\leaf(T_2), \ldots, \leaf(T_l)\right)|P-u| \cdot |T_2| \cdots |T_k|$. Using the out-edge from $u'$ to the first vertex of the embedded $P-u$ and appending $P-u$ we find a copy of $T_1$ in colour $1$, a contradiction.
	
	\begin{figure}[h]
		\caption{The setting of the argument in Case 1.}
		\includegraphics{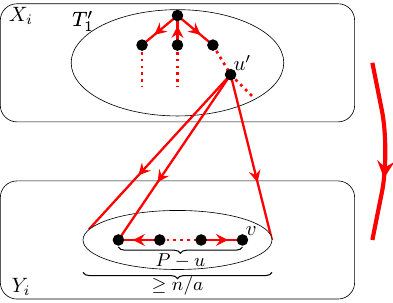}
		\label{fig:X-Y}
	\end{figure}
         
	If $\leaf(T_1) = 2,$ then $T_1$ is a path, so by Lemma \ref{lem:path-vs-indep} we can find an independent set $S$, with respect to colour $1$, of size at least the following.
	\begin{align*}
		\frac{  \frac{n}{a}-|T_1|}{|T_1|} 
		& =  2 \cdot (2a)^{\leaf(T_1) - 1 + \ldots + \leaf(T_l) + (k-l)(2a+1)}|T_2| \cdots |T_k|-1 \\
		& \ge (2a)^{\leaf(T_2) + \ldots + \leaf(T_l) + (k-l)(2a+1)}|T_2| \cdots |T_k|.
	\end{align*}
        
	By $A_{k-1,l-1}(T_2,\ldots,T_k)$, applied to the set $S$, we find a copy of $T_l$ in colour $l$ for some $2 \le l \le k$, a contradiction to our assumption.
        
    \vspace{-0.3cm}    
    \subsubsection*{Case 2. we found $i<j$ such that $c_i=c_j=k$.}  
	Let $X=X_j, Y=Y_j, Z=Y_i$, then because $(X_i,Y_i)$ and $(X_j,Y_j)$ are $\frac{n}{a}$-mindegree pairs for edges of colour $k$, every vertex in $Y$ has at least $n/a$ in-neighbours in $X$ and at least $n/a$ out-neighbours in $Z$ (since $Y \subseteq X_i$ and $(X_i,Y_i)$ is a $\frac{n}{a}$-mindegree pair), with respect to colour $k$.

	We consider the $2a$-core of $T_{k}$ (with respect to an arbitrary root $r$ of $T_k$; see Definition \ref{def:core}), which we denote by $T_{k}'$. We know that $\leaf(T_{k}') \le 2a$, so the following holds.
	\begin{align*}
		\frac{n}{a}  & = 2 \cdot (2a)^{\leaf(T_1) + \ldots + \leaf(T_l) + (k-l)(2a+1)-1} \cdot |T_1| \cdots |T_k| \\
        & > (2a)^{\leaf(T_1) + \ldots + \leaf(T_l) + 2a + (k - l - 1)(2a+1)} \cdot |T_1| \cdots |T_{k}| \\
        & \ge (2a)^{\leaf(T_1) + \ldots + \leaf(T_l) + \leaf(T_k') + (k-l-1)(2a+1)} \cdot |T_1| \cdots |T_{k-1}| \cdot |T_{k}'|.
	\end{align*} 
	Hence, we can find $T_{k}'$ in colour $k$ within $Y$, using $A_{k,l+1}(T_1, \ldots, T_l, T_k')$ (note that we have $l<k$ in this case, as when $k=l$ we automatically end up in Case 1). 
         
	Within a subset of vertices of size $t \ge \frac{n}{2a}$, we can always find, in colour $k,$ any tree $T_k''$ of order up to $\frac{|T_{k}|}{2a}$. This follows from $A_{k,l}(T_1,\ldots,T_{k-1},T_k'')$, which in turn follows as 
	\begin{align*}
		t \ge \frac{n}{2a} 
		& = f_{k,l}\big(\leaf(T_1), \ldots, \leaf(T_l)\big)\cdot |T_1|\cdots |T_{k-1}| \cdot \frac{|T_{k}|}{2a} \\
       	& \ge f_{k,l}\big(\leaf(T_1), \ldots, \leaf(T_l)\big)\cdot|T_1| \cdots |T_{k-1}| \cdot |T_{k}''|.
	\end{align*}       	
        
	Now given a vertex $x$ of $T_{k}'$, we can embed its children (in $T_k$, with respect to $r$), not already in $T_k'$, together with their subtrees in $T_k$, as follows. Let $y$ be an out-child of $x$ which is not in $T_k'$, and denote its subtree in $T_k$ by $S$. Consider the set $U$ of out-neighbours of $x$ in $Z$ (with respect to colour $k$) which were not already used. Then $|U| \ge \frac{n}{a} - |T_k| \ge \frac{n}{2a}$, hence we may find a copy of $S$ in colour $k,$ in $U$, as $|S| \le |T_k|/2a$ by the definition of a $2a$-core (see Figure \ref{fig:X-Y-Z}). If $y$ is an in-child of $x$, one can similarly embed its subtree in colour $k$ within the in-neighbourhood of $x$ in $X$. We thus obtain a copy of $T_k$ of colour $k$, a contradiction to our assumptions.
\end{proof}

\begin{figure}[h]
    \caption{The setting of the argument in Case 2.}
    \includegraphics{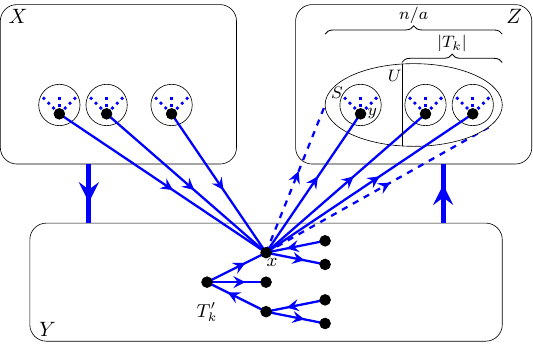}
    \label{fig:X-Y-Z}
\end{figure}

    

\section{Ramsey number on the complete directed graph}\label{sec-compl}

We start by defining the asymmetric directed Ramsey numbers; let $\R(G_1,G_2, \ldots, G_k)$ be the smallest integer $n$ such that in any $k$-colouring of $\dircomp{n}$ there is an $i$ such that there is a copy of $G_i$ in colour $i$. The main result of this section is the following generalisation of Theorem \ref{thm:complete-trees} to asymmetric directed Ramsey numbers.

\begin{thm}\label{thm:complete-trees-m}
    Let $k \ge 2$, there is a constant $c_k$ such that for any oriented trees $T_1, \ldots, T_k$, we have     
    $$\R(T_1,\ldots, T_k) \le c_k|T_1| \cdots |T_k|\left(\sum_{i=1}^k \frac{1}{|T_i|}\right).$$
\end{thm}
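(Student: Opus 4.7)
I would prove Theorem \ref{thm:complete-trees-m} by induction on $k$, with the base case $k = 2$ being the critical new ingredient where the linear bound (versus the quadratic bound obtained in the oriented setting) must be established.

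\textbf{Base case $k=2$.} I want to show $\R(T_1, T_2) \le c_2(|T_1| + |T_2|)$. In a 2-coloured $\dircomp{n}$ with $n$ of the appropriate size, one colour (say red) has at least $n(n-1)/2$ edges. The plan is to extract from the red subdigraph a substructure on $\Omega(n)$ vertices in which every vertex has red in-degree and red out-degree both at least $|T_1|$. Once such a substructure is available, $T_1$ embeds in red by a greedy BFS argument: root $T_1$ at any vertex, process children in BFS order, and place each new vertex of $T_1$ into the red in- or out-neighbourhood of its parent's image (inside the substructure) according to the orientation of the corresponding tree edge. The structural step (producing a subdigraph with balanced semi-degree) requires combining information about both edge directions, and this is where the bidirectional nature of $\dircomp{n}$ is essentially used -- a one-sided mindegree pair as in Lemma \ref{lem:split} is not sufficient, since oriented trees have edges of both orientations.

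\textbf{Inductive step $k \ge 3$.} Assuming the bound for $k-1$ colours, consider a $k$-coloured $\dircomp{n}$ with $n \ge c_k \sum_i \prod_{j \ne i} |T_j|$. By pigeonhole, some colour (say colour $k$) has density at least $1/k$, so by Lemma \ref{lem:split} there is an $\Omega(n/k)$-mindegree pair $(X, Y)$ in colour $k$. Following the template of Lemma \ref{lem:reduced-one} from the oriented case, one of two things happens. Either we embed $T_k$ in colour $k$ using the $O(k)$-core of $T_k$ (Definition \ref{def:core}): we embed the core in $(X,Y)$ via the mindegree pair and then fill in each small subtree of $T_k$ inside the colour-$k$ neighbourhood of its anchor. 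Or we extract a colour-$k$-independent subset $S$ (i.e.\ no colour-$k$ edges among its vertices) of size at least $c_{k-1} \sum_{i \le k-1} \prod_{j \ne i,\, j \le k-1} |T_j|$, and conclude via the $(k-1)$-colour inductive hypothesis. Careful bookkeeping must ensure the desired dependence $\sum_i \prod_{j \ne i} |T_j|$ is preserved across the induction; the improved base case provides exactly the slack required.

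\textbf{Main obstacle.} The chief difficulty is the base case $k=2$: going from the quadratic bound of the oriented setting to a linear one requires producing a subdigraph of the red graph with \emph{both} the in-degree and the out-degree simultaneously large, beyond the one-sided information supplied by Lemma \ref{lem:split}. A secondary difficulty in the inductive step is the careful tracking of sizes so that the dependence $\sum_i \prod_{j \ne i} |T_j|$ -- rather than the weaker $\prod_i |T_i|$ coming from the oriented case -- is maintained; the framework of Lemma \ref{lem:reduced-one} extends naturally once the base case is in hand.
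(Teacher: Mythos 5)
Your proposal has two genuine gaps, and the first is fatal as stated. The structural step in your base case $k=2$ --- extracting a subset of $\Omega(n)$ vertices in which every vertex has red in-degree \emph{and} red out-degree at least $|T_1|$ --- is false in general. Colour $\dircomp{n}$ by fixing a linear order and, for $i<j$, making the edge $ij$ red and $ji$ blue: red then has exactly half of all $n(n-1)$ edges, yet in \emph{every} subset $S$ the maximal vertex of $S$ has red out-degree $0$ inside $S$, so no subset with balanced red semi-degrees exists, of any size. (In this example red happens to contain every oriented tree, but your argument cannot detect that.) This ``one-directional'' behaviour of a colour is exactly the hard case, and the paper's treatment of it is the bulk of the work: it first reduces to in- or out-directed trees (\Cref{lem:reorient-one,lem:reorientation}), which makes a one-sided mindegree pair from \Cref{lem:split} usable after all (the tree is split into out- and in-directed pieces embedded alternately between the two sides); it then proves directed path versus out-tree (\Cref{thm:dirpath-vs-outtree}) by using the DFS-type \Cref{lem:dfs} to locate a partition $\{U,W\}$ with no red edges from $U$ to $W$ and embedding the tree across this cut via an out-leaf count (\Cref{lem:deg-leaf}); and only then handles tree versus tree (\Cref{lem:bounded-leaf-vs-any}, \Cref{thm:outintree-vs-outintree}). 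None of this is replaced by a greedy BFS embedding, because the lemma you would feed it does not exist.

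The inductive step also assumes a dichotomy you cannot currently justify: either embed $T_k$ in colour $k$, or find a colour-$k$-free set of size roughly $n/(c|T_k|)$. For paths such an independent set comes from GHRV (\Cref{lem:path-vs-indep}), but for an arbitrary tree the statement ``a digraph on $N$ vertices with no copy of a tree of order $m$ has an independent set of size $\Omega(N/m)$'' is precisely the open strengthening discussed in the paper's concluding remarks; the paper proves it only for out-directed trees with few out-leaves (\Cref{lem:bounded-deg-tree-vs-indep-set}, via symmetric trees), and therefore works with the core and settles, in \Cref{claim:mindeg-core}, for a large set containing no colour-$i$ copy of a \emph{small subtree} $T_i'$ of $T_i$ rather than a colour-$i$-free set. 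The final induction is then on the triple $(k,l,\sum_i |T_i|)$, applying the claim to two frequent colours and shrinking two trees, not a straight reduction to $k-1$ colours. Your closing remark that ``the improved base case provides exactly the slack required'' is also not accurate: running the oriented-case template of \Cref{lem:reduced-one} on top of a linear two-colour bound still loses a factor of order $|T_i|$ at each colour and yields only $c_k|T_1|\cdots|T_k|$, so the gain of the factor $\sum_i 1/|T_i|$ must come from the core/independent-set mechanism above, which your sketch does not supply.
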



We say that a tree $T$ is \textit{out-directed} if there is a vertex $r$, which we call the \emph{root}, such that all edges of $T$ are directed away from $r$. Similarly, $T$ is an \emph{in-directed} tree if all its edges are directed towards a certain vertex $r$ (see Figure \ref{fig:in-vs-out}).
\begin{figure}[h]
\caption{An out-directed tree and an in-directed tree.}
\includegraphics[]{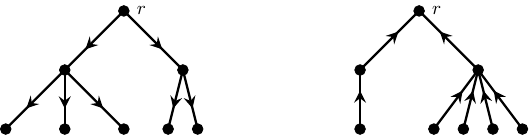}
\label{fig:in-vs-out}
\end{figure}

Our proof strategy goes as follows.
\begin{enumerate}
	\item In the following subsection, \ref{sec-prel}, we state and prove several preliminaries that we shall use throughout the proof. An important part of the preliminaries is a lemma that shows that, by paying a constant factor, we can focus our attention on out- or in-directed trees.

    \item 
    In Subsection \ref{sec-path-vs-tree}, we prove the very special case of Theorem \ref{thm:complete-trees-m}, of a directed path vs.\ an in- or out-directed tree. This requires considerable effort, especially when the path is fairly long in comparison with the tree, in which case we first find a cut with no red edges (where red is the colour of the required path), and then embed the tree in blue.
    Using the preliminary result mentioned above, this extends easily to the case when $T_1$ is an arbitrary oriented path and $T_2$ an arbitrary oriented tree.

    \item In the subsequent subsection, \ref{sec-general-case}, we prove Theorem \ref{thm:complete-trees-m} for any two trees $T_1$ and $T_2$. We do so by first proving that $\R(T_1, T_2) \le c(|T_1| + |T_2|)$ while allowing $c$ to depend on the number of leaves of $T_1$. Then,  considering cores and using some simple properties of the structure of a potential colouring, we remove the dependency on the number of leaves.
	\item In the final subsection, \ref{sec-more-col}, we show how to extend the result to more colours, thus completing the proof of Theorem \ref{thm:complete-trees-m}. The proof here follows an argument which is morally similar to the corresponding proof in the oriented case, but we need an additional tool, namely the Ramsey number of an out-tree with constantly many leaves vs.\ a complete graph.
\end{enumerate}
\subsection{Preliminaries}\label{sec-prel}

We will use the following result of Gy\'arf\'as and Lehel \cite{gyarfas1973ramsey} and Williamson \cite{williamson73}, which finds the exact Ramsey number for two directed paths, in several places in our proof, although our methods would give an independent argument for a linear bound on this Ramsey number. Note that this is a special case of Theorem \ref{thm:complete-trees-m} for two colours and directed paths.
\begin{thm} [Gy\'arf\'as, Lehel \cite{gyarfas1973ramsey}, Williamson \cite{williamson73}] \label{thm:williamson}
		$\R ( \dirpath{n},\dirpath{m} ) 
		= \left\{ 
		\begin{array}{ll}
			n + m - 3 & n, m \ge 3 \\
			n + m - 2 & n=2 \text{ or } m=2 \\
			1 & n=1 \text{ or } m=1.
		\end{array}
		\right.$
\end{thm}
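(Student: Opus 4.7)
My plan splits into the principal case $n, m \ge 3$ and the boundary cases $\min(n,m) \le 2$. For the principal case I would prove matching upper and lower bounds, with the upper bound reducing cleanly to the cited result of Raynaud~\cite{raynaud1973circuit}, which I would state as: every $2$-arc-coloring of $\dircomp{N}$ contains a monochromatic directed Hamilton path.

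For the lower bound when $n, m \ge 3$, I would exhibit an explicit $2$-coloring of $\dircomp{n+m-4}$ avoiding both targets. Partition the vertex set as $A \cup B$ with $|A|=n-2$ and $|B|=m-2$; color all arcs inside $A$ and all arcs from $B$ to $A$ red, and all remaining arcs (those inside $B$ and from $A$ to $B$) blue. No red arc leaves $A$, so every red directed path is a directed path inside $A$ possibly preceded by a single vertex of $B$, giving at most $|A|+1 = n-1$ vertices. Symmetrically, every blue directed path has at most $|B|+1 = m-1$ vertices.

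For the upper bound when $n,m \ge 3$, I would apply Raynaud's theorem to any $2$-coloring of $\dircomp{n+m-3}$ to obtain a monochromatic directed Hamilton path on all $n+m-3$ vertices. If it is red then, since $m \ge 3$, this path has at least $n$ vertices and hence contains $\dirpath{n}$; the blue case is symmetric. The boundary cases are short: for $n=1$ or $m=1$, $\dirpath{1}$ is a single vertex, so $\R = 1$. For $n=2$ and $m \ge 2$ (by symmetry), the all-blue coloring of $\dircomp{m-1}$ gives the lower bound $\R \ge m$; conversely on $\dircomp{m}$, either some arc is red (yielding $\dirpath{2}$) or every arc is blue and we simply take a Hamilton directed path inside the complete blue digraph.

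The only real obstacle is Raynaud's theorem itself, which I would use as a black box. Reproving it would proceed by induction on $N$: remove a vertex $v$, apply induction to $\dircomp{N-1}$ to get a monochromatic Hamilton path, say red, $u_1 \to \cdots \to u_{N-1}$, and try to splice $v$ in. Splicing fails iff $vu_1$ and $u_{N-1}v$ are both blue and, for every $i$, at least one of $u_iv, vu_{i+1}$ is blue. The crux is to convert this ``mostly blue'' data into an actual blue Hamilton path, typically via a careful case analysis on the transition indices where the blue/red pattern of the arcs between $v$ and the $u_i$'s changes.
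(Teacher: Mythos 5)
Note first that the paper does not prove this statement at all: it is quoted as a known result of Gy\'arf\'as--Lehel and Williamson, deduced from a theorem of Raynaud. So your proposal is an attempt at a genuine proof, and its skeleton (explicit lower-bound colouring, reduction of the upper bound to Raynaud, short boundary cases) is the right one. The lower bound construction and the cases $\min(n,m)\le 2$ are correct as written.

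However, there is a genuine gap in the upper bound: the ``black box'' you attribute to Raynaud is misstated, and in the form you state it is false. It is not true that every $2$-colouring of $\dircomp{N}$ contains a monochromatic directed Hamilton path; your own lower-bound colouring is a counterexample (take $|A|,|B|\ge 2$: no red arc enters $B$, so a red path contains at most one vertex of $B$ and cannot be Hamiltonian, and symmetrically for blue). Consequently the inductive ``splice $v$ into a monochromatic Hamilton path'' argument you sketch is attempting to prove a false statement and cannot be completed. Raynaud's actual theorem is weaker: every $2$-coloured $\dircomp{N}$ contains a Hamiltonian directed cycle which is the union of one red directed path and one blue directed path. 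This still suffices, but the deduction needs the corresponding counting step: if the red path has $a$ vertices and the blue path has $b$ vertices, then $a+b=N+2$ (the two paths share their two endpoints), so on $N=n+m-3$ vertices we get $a+b=n+m-1$, forcing $a\ge n$ or $b\ge m$, i.e.\ a red $\dirpath{n}$ or a blue $\dirpath{m}$ (and the degenerate case of an entirely monochromatic Hamiltonian cycle is even easier, using $n,m\ge 3$). With Raynaud's theorem stated and used in this correct form (or reproved by an insertion argument that maintains a two-path Hamiltonian cycle rather than a monochromatic Hamilton path), your proof goes through.
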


The following simple embedding lemmas will prove surprisingly useful in controlling the number of vertices with small out or in-degree in a single colour.
\begin{lem}\label{lem:bidirect-embed}
Let $\eps>0,n\ge 2$, given a colouring of $\dircomp{n}$ in which there are at least $(1+\eps)\binom{n}{2}$ blue edges, then there exists any directed tree of order at most $\lceil\frac{\eps n}{2}\rceil$ in blue.
\end{lem}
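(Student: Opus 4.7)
The plan is to reduce the lemma to a standard greedy embedding of trees inside an auxiliary undirected graph. Let $H$ be the undirected graph on the same vertex set whose edges are the pairs $\{u,v\}$ for which both directed edges $uv$ and $vu$ are blue. Since every edge of $H$ is available in either orientation, any oriented tree of order $t$ embeds into any subgraph of $H$ with minimum degree at least $t-1$ by the usual greedy procedure: root the tree arbitrarily and extend one vertex at a time in BFS order, using that the parent's image has at least $t-1$ neighbours in $H$ while at most $t-2$ have been used so far. Hence it suffices to exhibit a subgraph of $H$ with minimum degree at least $\lceil \eps n/2 \rceil - 1$.

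To bound $|E(H)|$ from below, let $a$, $b$, $c$ denote the numbers of unordered pairs of vertices whose two directed edges receive respectively $2$, $1$, or $0$ blue colours. Then $a+b+c = \binom{n}{2}$ and $2a+b \ge (1+\eps)\binom{n}{2}$, which rearranges to $a - c \ge \eps \binom{n}{2}$; in particular $|E(H)| = a \ge \eps \binom{n}{2}$. Since $\dircomp{n}$ has only $2\binom{n}{2}$ edges in total, the hypothesis also forces $\eps \le 1$.

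The required subgraph is then extracted by a standard peeling argument: iteratively delete from $H$ any vertex whose current degree is less than $\lceil \eps n/2 \rceil - 1$. Each deletion removes at most $\lceil \eps n/2 \rceil - 2$ edges, for a total of at most $n(\lceil \eps n/2 \rceil - 2)$. Using $\eps \le 1$, one checks that this upper bound is strictly smaller than $\eps\binom{n}{2} \le |E(H)|$, so the process cannot exhaust the vertex set and the surviving non-empty subgraph has the required minimum degree. I don't foresee any genuine obstacle; the only care-point is the numerical inequality in the peeling step, which boils down to $\lceil \eps n/2 \rceil < \eps(n-1)/2 + 2$ and is immediate from $\eps \le 1$ and $n \ge 2$.
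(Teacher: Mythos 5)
Your proposal is correct and follows essentially the same route as the paper: both arguments count at least $\eps\binom{n}{2}$ bidirected blue pairs, peel away vertices of small bidirected-blue degree to obtain a non-empty subgraph of minimum degree about $\eps n/2$, and then embed the tree greedily using the bidirected edges. The only differences (your peeling threshold $\lceil \eps n/2\rceil - 1$ versus the paper's $\eps n/2$, and your explicit use of $\eps \le 1$) are cosmetic.
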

\begin{proof}
    As there are $\binom{n}{2}$ pairs of vertices, there are at least $\eps \binom{n}{2}$ bidirected blue edges. Now remove, one by one, vertices that are incident with fewer than $\eps n/2$ bidirected blue edges. Since we remove fewer than $(n-1)\eps n/2$ edges in this process, we remain with a non-empty set of vertices in which the minimum blue bidirected degree is at least $\lceil\eps n/2\rceil$. It is now easy to see that any blue directed tree on at most $\lceil\eps n/2\rceil$ vertices can be embedded.
\end{proof}
We will only use the ceiling in the above lemma to gain a tight result in the following lemma.
\begin{lem}\label{lem:high-deg-embed}
    Given a $2$-colouring of a complete digraph, if there are at least $2k + 2l$ vertices with at most $k$ red out-neighbours, then there is a blue copy of any directed tree on at most $l$ vertices.
\end{lem}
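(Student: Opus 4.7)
The plan is to reduce directly to an application of Lemma \ref{lem:bidirect-embed} inside the subgraph induced by a carefully chosen set of low red out-degree vertices. Let $S$ be a set of exactly $n := 2k+2l$ vertices, each having at most $k$ red out-neighbours in the whole digraph (which exists by hypothesis). Then every vertex of $S$ has at most $k$ red out-neighbours within $S$ itself, so the number of red directed edges in the induced complete digraph on $S$ is at most $k|S| = kn$.

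The total number of directed edges in $\dircomp{n}$ is $n(n-1)$, so the number of blue directed edges in $S$ is at least $n(n-1) - kn = n(n-1-k)$. Comparing with $\binom{n}{2} = n(n-1)/2$, this means the blue edges account for at least a $(1+\eps)$ fraction of $\binom{n}{2}$ where
\[
\eps \;\ge\; \frac{2(n-1-k)}{n-1}-1 \;=\; \frac{n-1-2k}{n-1} \;=\; \frac{2l-1}{2k+2l-1}.
\]
A short computation then gives $\eps n/2 \ge l - 1/2$ (equivalently, $(2l-1)(k+l) \ge (l-1/2)(2k+2l-1)$, which rearranges to $0 \ge -1$), so $\lceil \eps n/2 \rceil \ge l$.

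Applying Lemma \ref{lem:bidirect-embed} to the induced $2$-colouring on $S$, we conclude that any directed tree on at most $\lceil \eps n/2 \rceil \ge l$ vertices embeds in blue, which is exactly the desired conclusion. The case $l=0$ is trivial, and the choice $n = 2k+2l$ is what makes the ceiling tight, explaining why the authors pointed out in the preceding lemma that the ceiling was needed to obtain a sharp bound here. There is no real obstacle beyond bookkeeping; the only subtlety is to verify that the resulting $\eps n/2$ rounds up to at least $l$, which is where the factor $2$ in front of both $k$ and $l$ in the hypothesis is essential.
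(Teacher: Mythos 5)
Your proof is correct and follows essentially the same route as the paper's: take a set of exactly $2k+2l$ low red out-degree vertices, bound the red edges by $k|S|$, deduce a $(1+\eps)\binom{|S|}{2}$ blue edge count, and invoke Lemma \ref{lem:bidirect-embed} together with the ceiling. One cosmetic slip: the inequality $(2l-1)(k+l) \ge (l-\tfrac12)(2k+2l-1)$ rearranges to $l \ge \tfrac12$ rather than $0 \ge -1$, which is still fine since $l \ge 1$ in the nontrivial case (and you treat $l=0$ separately).
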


\begin{proof} 
    Suppose that $S$ is a set of $2k + 2l$ vertices with red out-degree at most $k$. Then the number of red edges in $S$ is at most $k |S|$, hence the number of blue edges in $S$ is at least $|S|(|S|-1) - k|S| = \left(2 - \frac{2k}{|S|-1}\right)\binom{|S|}{2}$. By Lemma \ref{lem:bidirect-embed}, there is a blue copy of any tree of order at most $\left\lceil \left(1 - \frac{2k}{|S|-1}\right)\frac{|S|}{2} \right \rceil \ge l$, where the inequality follows as $\left(1 - \frac{2k}{|S|-1}\right)\frac{|S|}{2}= k+l - k - \frac{k}{|S|-1} > l - 1$, as required.
\end{proof}

The following lemma, based on the Depth-first-search (DFS) algorithm, is an easy generalisation, for trees, of the version for paths introduced by Ben-Eliezer, Krivelevich and Sudakov \cite{ben2012long-cycles,ben2012size}; it will be very useful in several parts of this section.

\begin{lem}[DFS] \label{lem:dfs}
    Let $G$ be a directed graph and $T$ an out-directed tree, which is not a subgraph of $G$. Then there is a partition $\{U, X, Y\}$ of $V(G)$  such that $U$ is the vertex set of a subgraph of $G$ which is isomorphic to a subtree of $T$, $|X| = |Y|$, and every vertex in $X$ has fewer than $|T|$ out-neighbours in $Y$.
    
\end{lem}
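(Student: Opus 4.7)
I would generalise the DFS argument for directed paths (Ben-Eliezer, Krivelevich, Sudakov) to out-directed trees. The algorithm maintains a partition $\{U, X, Y\}$ of $V(G)$ together with a partial embedding $\phi: T' \to U$ of a subtree $T' \subseteq T$ that always contains the root of $T$ and satisfies $\phi(T') = U$. At each time step, exactly one of the following moves is executed:
\begin{itemize}
\item \emph{Extend}: if some $u \in U$ whose preimage $\phi^{-1}(u)$ has an unplaced child $t' \in T$ has an out-neighbour $y \in Y$, set $\phi(t') = y$ and move $y$ from $Y$ to $U$.
\item \emph{Shrink}: if no extension is possible and $U \ne \emptyset$, move a suitable leaf of $T'$ from $U$ to $X$, removing the corresponding $T$-vertex from the domain of $\phi$.
\item \emph{Restart}: if $U = \emptyset$ but $Y \ne \emptyset$, pick $y \in Y$, set $\phi(r) = y$, and move $y$ to $U$.
\end{itemize}
Since extensions add leaves to $T'$ and shrinks remove them, the domain of $\phi$ remains a subtree of $T$ throughout.

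The key observation is that each step strictly increases $|X| - |Y|$ by exactly $1$ (extensions and restarts decrease $|Y|$; shrinks increase $|X|$). Starting at $-|V(G)|$ and ending at a non-negative value once $Y$ is empty, an intermediate-value argument yields a time $t^*$ at which $|X| = |Y|$. At this moment $\phi$ witnesses $U$ as the vertex set of a subgraph of $G$ isomorphic to the subtree $T'$ of $T$, which is the first required property. For the second, any vertex moved to $X$ by a shrink had by construction no out-neighbour in $Y$ at the moment of the shrink (for a ``stuck'' leaf of $T'$, since no extension was possible); as $Y$ only shrinks afterwards, every $x \in X$ retains zero (and therefore fewer than $|T|$) out-neighbours in $Y$ at $t^*$.

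To justify that a shrink is always available when $U \ne \emptyset$ and no extension is possible, I use the hypothesis $T \not\subseteq G$: since $T' \ne T$ and $T'$ is ancestor-closed, some vertex of $T'$ has an unplaced $T$-child, so by no-extension its image has no out-neighbour in $Y$. In the clean case when such a vertex is a leaf of $T'$, we shrink it directly. Otherwise every leaf of $T'$ corresponds to a leaf of $T$, and we shrink such a ``successful'' leaf $\ell$; here the $|T|$-slack in the out-degree bound is exactly what accommodates the residual out-neighbours of $\ell$ in $Y$, and this bound should follow from a structural pigeonhole exploiting $T \not\subseteq G$: an abundance of out-neighbours at $\ell$ would allow re-routing the embedding through $\ell$ to produce a copy of $T$ in $G$.

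\paragraph{Main obstacle.} The subtle technical point is precisely the second case: bounding the $Y$-out-degree of a shrunk successful leaf by $|T|-1$ using the assumption $T \not\subseteq G$. I would approach this by showing that if such a leaf had $\geq |T|$ out-neighbours in $Y$, one could combine its unused out-neighbourhood with the remaining fragment of $T$ (the portion not yet embedded below the internal frontier vertices) to complete a copy of $T$ in $G$, a contradiction. A secondary, easier obstacle is the possibility that $U$ is empty at the exact moment $|X| = |Y|$; this can be handled by a one-vertex perturbation of the partition at the adjacent time step, which the $|T|$-slack in the out-degree condition comfortably allows.
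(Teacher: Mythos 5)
Your step-counting and intermediate-value bookkeeping is fine, but the point you flag as the ``main obstacle'' is a genuine gap, and the patch you propose there does not work. In the case where the stuck frontier vertex (one with an unplaced $T$-child) is internal to $T'$, you shrink a ``successful'' leaf $\ell$ (one whose preimage is a leaf of $T$) and hope that $\ell$ having at least $|T|$ out-neighbours in $Y$ would let you re-route and complete a copy of $T$. This is false: the out-neighbours of $\ell$ can only absorb vertices of $T$ lying \emph{below} $\ell$'s preimage, but that preimage is a leaf of $T$, while the unembedded part of $T$ hangs below \emph{other} frontier vertices, which by assumption have no out-neighbours in $Y$ at all. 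Concretely, let $T$ have root $r$ with children $a$ (a leaf) and $b$, where $b$ has one leaf child, so $|T|=4$; let $G$ consist of an edge $u\to v$ together with edges $v\to y_1,\dots,v\to y_N$ for large $N$. Then $T\not\subseteq G$ (the only vertex of out-degree at least $2$ is $v$, and all its out-neighbours are sinks), yet your algorithm may embed $r\mapsto u$, $a\mapsto v$ and get stuck: $u$ has the unplaced child $b$ but is not a leaf of $T'$, the only leaf of $T'$ is the successful leaf $v$, and $v$ has $N\ge|T|$ out-neighbours in $Y$. No re-routing produces $T$, and if you shrink $v$ anyway, then at the moment $|X|=|Y|$ the vertex $v\in X$ still has roughly $N/2$ out-neighbours in $Y$, so the output violates the conclusion of the lemma. (Your ``secondary obstacle'', $U=\emptyset$ at time $t^*$, is not an issue at all; the lemma is only used via $|U|<|T|$, and the paper's own process also allows empty $U$.)

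The paper's proof avoids this situation by two design choices that your sketch is missing. First, when extending at a frontier vertex $v$ with $d$ unplaced children, it embeds \emph{all $d$ children at once} (or not at all); this maintains the invariant that every vertex of $T'$ either has its full set of children or is an out-leaf of $T'$, so a stuck frontier vertex is always an out-leaf of $T'$ and the awkward ``successful leaf'' case never arises -- the vertex chosen for removal always has fewer than $d$ out-neighbours in $Y$. Second, upon such a failure it does not peel off leaves one at a time: it moves only the failed vertex $v$ to $X$ and returns \emph{all other vertices of $U$ to $Y$}. The failed vertex then has at most $d-1$ out-neighbours in the old $Y$ plus at most $|T'|-1$ among the returned vertices, and since $|T'|\le|T|-d$ (none of $v$'s descendants are in $T'$), this totals fewer than $|T|$; as every later $Y$ is contained in the current $Y\cup U$ minus later $X$-vertices, the bound persists, which is exactly where the $|T|$-slack in the statement is used. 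You would need to restructure your extend/shrink moves along these lines; the one-child-at-a-time extension combined with leaf-only shrinking cannot be repaired by the pigeonhole you describe.
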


\begin{proof}
    We construct the required $X$, $Y$ and $U$ using the following process.
    We maintain a partition $\{U, X, Y\}$ of $V(G)$ and an embedding $T'$ of a subtree of $T$ in $G$, with vertex set $U$. 
    We keep the following properties invariant. 
    \begin{enumerate}
    \item[(P1)] \label{itm:y-ge-x}
    		$|Y| \ge |X|.$
    \item[(P2)] \label{itm:U-subtree}
    		$T'$ is a subgraph of $G$, which is isomorphic to a subtree of $T$ rooted at the same root, such that every vertex of $T'$ either has the same number of children as in $T$ or is an \emph{out-leaf} (a leaf with out-degree $0$) of $T'.$
    \item[(P3)] \label{itm:x-few-outneighs}
    		Any vertex in $X$ has fewer than $|T|$ out-neighbours in $Y$.
    \end{enumerate}  
    We start with $X=\emptyset$, $Y=V$ and $U = \emptyset$. 
    At each step, if $|X|=|Y|$ we stop the process; otherwise we perform the following procedure.
    \begin{enumerate}
        \item \label{itm:1} If $T'$ is empty, we remove an arbitrary vertex $u$ from $Y$, and put it in $U$, thus letting $T'$ be a single vertex that corresponds to the root of $T$.
        \item \label{itm:2} Otherwise, we pick an out-leaf $v$ of $T'$, which does not correspond to a leaf of $T$; denote its out-degree in $T$ by $d$. 
        \begin{enumerate}[label=\alph*),ref=2\alph*]
            \item \label{itm:2a} If $v$ has at least $d$ out-neighbours in $Y$ we append them to $T'$ via $v$ and remove them from $Y$, unless this would cause $|Y|$ to be smaller than $|X|$, in which case we only embed as many as needed to make $|X|=|Y|$ and stop.
            \item \label{itm:2b} Otherwise, $v$ has at most $d-1$ out-neighbours in $Y$, so we add $v$ to $X$ and put every vertex of $U$ back to $Y$.        
        \end{enumerate}
    \end{enumerate}  
    
    We note that (P1) is preserved throughout this process. Property (P2) also holds throughout the process, with a possible exception at the very end of the process (if case \ref{itm:2a} occurs at the end, and removing $d$ vertices from $Y$ would cause a violation of (P1)), where $U$ is still the vertex set of a subtree $T'$ of $T$, but one of the vertices may not be an out-leaf or have all its children present. Finally, it is easy to see that (P3) is preserved, as a vertex $x$ is moved to $X$ only in Case \ref{itm:2b}, in which case it has at most $|T'| + d - 1 < |T|$ out-neighbours in $Y$, where $T'$ is a subtree of $T$ in which $x$ is present but none of its children is, and $d$ is the out-degree of $x$ in $T$.
    
	Finally, we note that in each step $|X|$ does not decrease, and either $|X|$ increases by $1$, or $|Y|-|X|$ decreases. Thus the process cannot run indefinitely, and once it terminates, we obtain a partition $\{U, X, Y\}$ with the required properties.
	\end{proof}

Given oriented trees $T_1, \ldots, T_k$, let $S_i(T_1, \ldots, T_k)$ denote the out- or in-directed subtree $S$ of $T_i$ that maximises $\R(T_1,\ldots,T_{i-1}, S, T_{i+1}, \ldots,T_k).$ The following results allow us to focus our attention on out- or in-directed trees.

\begin{lem}\label{lem:reorient-one}
Let $\eps > 0$, let $T_1, \ldots, T_k$ be oriented trees and let $S_j=S_j(T_1, \ldots, T_k)$. 
Let $G$ be a $k$-colouring of $\dircomp{n}$ such that $n \ge 8 \eps^{-1} \left( \R(T_1,\ldots,T_{j-1},S_j,T_{j+1},\ldots, T_k)+|T_j| \right)$ and there are at least $\eps \binom{n}{2}$ edges of colour $j$. Then there is a copy of $T_i$ in colour $i$, for some $i \in [k]$.
\end{lem}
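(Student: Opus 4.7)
The plan is to apply Lemma~\ref{lem:split} to the colour-$j$ subgraph to extract a mindegree pair, then find $S_j$ in colour $j$ within one half of the pair using the Ramsey bound in the hypothesis, and finally extend this embedding to a copy of $T_j$ using the other half of the pair.

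First, since the colour-$j$ subgraph has at least $\eps\binom{n}{2}$ edges, Lemma~\ref{lem:split} yields an $(\eps n/8)$-mindegree pair $(X, Y)$ of $G$ in colour $j$. The hypothesis $n \ge 8\eps^{-1}(\R(T_1, \ldots, S_j, \ldots, T_k) + |T_j|)$ gives $|X|, |Y| \ge \eps n / 8 \ge \R(T_1, \ldots, S_j, \ldots, T_k) + |T_j|$.

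By the symmetry of reversing all arc directions simultaneously in $T_j$ and in $G$, assume $S_j$ is out-directed with some root $r$. Since larger out-directed subtrees of $T_j$ generally give rise to larger values of $\R(T_1, \ldots, \cdot, \ldots, T_k)$, we may take $S_j$ to be a maximum-size out-directed subtree of $T_j$. Restrict the $k$-colouring to $Y$: since $|Y| \ge \R(T_1, \ldots, S_j, \ldots, T_k)$, either we find a copy of $T_i$ in colour $i$ for some $i \neq j$ inside $Y$ (and we are done), or we obtain an embedding $\phi : S_j \hookrightarrow Y$ in colour $j$.

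To extend $\phi$ to a copy of $T_j$ in colour $j$, we use the vertices of $X$. By the maximality of $S_j$, each component $C$ of $T_j \setminus S_j$ attaches to $S_j$ via a single edge of the form $c \to v$, with $v \in S_j$ non-root and $c \in C$: otherwise, $S_j \cup \{c\}$ would remain out-directed, contradicting the maximality of $S_j$. For each such component we embed the attaching vertex $c$ into $X$ as an in-neighbour of $\phi(v)$ in colour $j$, which is possible because the mindegree condition ensures that $\phi(v) \in Y$ has at least $\eps n / 8 \ge |T_j|$ such in-neighbours, while we use at most $|T_j|$ in total. We then extend the embedding to the remaining vertices of each component, alternating between $X$ and $Y$ as dictated by the edge directions, using the $X \to Y$ out-edges in colour $j$ guaranteed by the mindegree pair.

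The main obstacle is handling the internal structure of the components of $T_j \setminus S_j$: the mindegree pair directly provides only edges in the $X \to Y$ direction in colour $j$, so edges within a component that would require an embedding in the $Y \to X$ direction (or within one side of the pair) are not immediately supported. Resolving this likely requires a careful structural argument that exploits both the maximality of $S_j$ and the slack $|T_j|$ in the set sizes $|X|, |Y|$, perhaps through an inductive or alternating embedding that processes each component level by level.
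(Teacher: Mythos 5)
There is a genuine gap, and it is the one you flag yourself at the end: the extension of the embedding from $S_j$ to all of $T_j$ is never carried out, and the plan you sketch for it does not work as stated. A $k$-mindegree pair is one-sided: vertices of $X$ have many colour-$j$ out-neighbours in $Y$, and vertices of $Y$ have many colour-$j$ in-neighbours in $X$, but nothing is guaranteed about out-neighbours of $Y$-vertices or in-neighbours of $X$-vertices. Since the components of $T_j \setminus S_j$ are arbitrary oriented trees, an edge-by-edge ``alternating'' embedding immediately gets stuck the first time a vertex placed in $X$ needs an in-child, or a vertex placed in $Y$ needs an out-child. (The earlier step, where you replace the Ramsey-maximising $S_j$ of the statement by a maximum-size out-directed subtree, is imprecise but harmless: by the definition of $S_j$ the hypothesis $|Y|\ge \R(T_1,\ldots,S_j,\ldots,T_k)+|T_j|$ lets you embed \emph{any} in- or out-directed subtree of $T_j$ inside any large enough colour-$j$ neighbourhood, which is in fact the key freedom you should be exploiting.)

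The paper closes exactly this gap by changing the decomposition. Instead of embedding one maximal out-directed subtree and then patching vertex by vertex, it fixes an arbitrary root $r$ of $T_j$ and partitions $V(T_j)$ into layers $U_0,U_1,U_2,\ldots$, where $U_0$ is the set reachable from $r$ by out-directed paths and, alternately, $T_j[U_i]$ is a forest of out-directed subtrees (for even $i$) or in-directed subtrees (for odd $i$) whose roots attach to vertices of $U_{i-1}$. Each such subtree is then embedded \emph{in one shot} inside the colour-$j$ out-neighbourhood in $Y$ (respectively in-neighbourhood in $X$) of its already-embedded parent, by applying the Ramsey bound of the hypothesis to the induced colouring of that neighbourhood; this is legitimate because every piece is an in- or out-directed subtree of $T_j$, so its asymmetric Ramsey number is dominated by $\R(T_1,\ldots,S_j,\ldots,T_k)$, and the neighbourhood has size at least $\eps n/8 \ge \R(T_1,\ldots,S_j,\ldots,T_k)+|T_j|$ even after discarding the at most $|T_j|$ vertices already used. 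The mindegree pair is used only for the single connecting edges between consecutive layers, and these always point in one of the two supported directions ($X\to Y$ out-edges for out-directed pieces, in-edges from $X$ into $Y$ for in-directed pieces). Without this layered in/out decomposition, or some equivalent device for handling arbitrarily oriented components, your argument does not go through.
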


\begin{proof}
	Without loss of generality, we assume $j=1$. We assume that there is no copy of $T_i$ in colour $i$ for $i \ge 2$, and will deduce that there is a copy of $T_1$ in colour $1$.

	Consider the graph of colour $1$ edges; it has at least $\eps \binom{n}{2}$ edges, so by Lemma \ref{lem:split} we can find an $\eps n/8$-mindegree pair $(X,Y)$ in it. 

	We select a vertex $r$ of $T_1$. Let $U_0$ consist of all vertices of $T_1$ reachable by out-directed paths starting in $r$. For $i \ge 1$, let $U_{i}$ consist of vertices not in any $U_{j},$ for $j\le i-1$, which are reachable by out-directed paths starting in $U_{i-1}$ if $i$ is even, or in-directed paths if $i$ is odd. We note that for even $i$, each $T_1[U_{i}]$ is a forest of out-directed subtrees of $T_1$, whose roots are out-neighbours of vertices in $U_{i-1}$; and, when $i$ is odd, $T_1[U_{i}]$ is a forest of in-directed subtrees whose roots are in-neighbours of vertices in $U_{i-1}$ (see Figure \ref{fig:in-out-split}).   
    \begin{figure}[h]
\caption{An illustration of the definition of $U_i$.}
\includegraphics[]{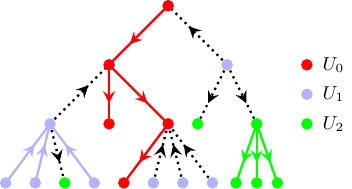}
\label{fig:in-out-split}
\end{figure}
\begin{claim} \label{claim:embed}
	There is a copy of $T_1[U_0 \cup \ldots \cup U_t]$ in colour $1$, such that $U_t$ is embedded within $Y$ if $t$ is even and within $X$ if it is odd.
\end{claim}
\begin{proof}
	We show this inductively. For the basis, we can embed $U_0$ in colour $1$ within $Y$ as $|Y| \ge \frac{\eps n}{8} \ge \R(S_1,T_2,\ldots, T_k)+|T_1| \ge \R(T_1[U_0],T_2,\ldots, T_k)$, and $T_1[U_0]$ is an out-directed subtree of $T_1$, and there are no copies of $T_i$ in colour $i$, for $i \ge 2$. Suppose that the claim holds for $i-1$, and suppose that $i$ is even; the case where $i$ is odd can be treated similarly. 
	Consider an embedding of $T_1[U_0 \cup \ldots \cup U_{i-1}]$ where $U_{i-1}$ is embedded within $X$. For each tree $S$ in $T_1[U_i]$, we need to embed it within the out-neighbourhood of its parent vertex $u$ in $U_{i-1}$ which is already embedded in $X$. Note that every vertex in $X$ has out-neighbourhood in $Y$ of size at least $\frac{\eps n}{8} \ge \R(S_1,T_2,\ldots, T_k)+|T_1|$, and so far we used at most $|T_1|$ vertices, hence we can find a copy of $S$ in colour $1$ within the out-neighbourhood of $u$ in $Y$, avoiding used vertices. We do this one by one for each subtree of $T$ in $T_1[U_i]$, thus completing an embedding of $T_1[U_0 \cup \ldots \cup U_i]$ with the required properties. This completes the inductive claim.
\end{proof}
Applying Claim \ref{claim:embed} for the largest $t$ such that $U_t$ is non-empty shows there is a copy $T_1$ in colour $1$, and we are done. 
\end{proof}

\begin{lem}\label{lem:reorientation}
	Given oriented trees $T_1$ and $T_2$, we have $\R(T_1, T_2) \le 16^2\max\{\R(R_1,R_2)+|T_1|+|T_2|\}$, where the maximum is taken over out- or in-directed subtrees $R_1$ and $R_2$ of $T_1$ and $T_2$ respectively.
\end{lem}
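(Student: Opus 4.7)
Set $R^* = \max\{\R(R_1,R_2) : R_1 \subseteq T_1,\, R_2 \subseteq T_2 \text{ out- or in-directed}\}$ and $N = 16^2(R^* + |T_1| + |T_2|)$, and consider an arbitrary $2$-coloring of $\dircomp{N}$. If both $T_i$ are out- or in-directed, then $\R(T_1,T_2) \le R^*$ trivially, and if exactly one of them is, a single application of Lemma \ref{lem:reorient-one} (to the other) finishes the job. So assume WLOG that $T_1$ is not out- or in-directed. The plan is to apply Lemma \ref{lem:reorient-one} twice, first reducing $T_1$ to $S_1 = S_1(T_1,T_2)$ and then reducing $T_2$ to an out- or in-directed subtree, using Lemma \ref{lem:bidirect-embed} as a fallback whenever one color class is unusually dense.

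If one color, say color $c$, has at least $(3/2)\binom{N}{2}$ edges, then Lemma \ref{lem:bidirect-embed} with $\eps = 1/2$ yields a color-$c$ copy of any tree of order at most $\lceil N/4 \rceil \ge |T_c|$, and we are done. Otherwise both color classes have at least $\binom{N}{2}/2$ edges, so in particular Lemma \ref{lem:reorient-one} is applicable with $k=2$, $j=1$, $\eps = 1/2$; this produces a red $T_1$ or a blue $T_2$ provided that $N \ge 16(\R(S_1, T_2) + |T_1|)$.

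To bound $\R(S_1, T_2)$, we repeat the same dichotomy on $\dircomp{M}$ with $M$ slightly exceeding $\max\{4|T_1|,\, 16(R^* + |T_2|)\}$. If one color has density above $(3/2)\binom{M}{2}$, Lemma \ref{lem:bidirect-embed} yields $S_1$ in that color (since $M/4 \ge |S_1|$) or $T_2$; otherwise both colors are balanced, and Lemma \ref{lem:reorient-one} with $j=2$, $\eps = 1/2$ reduces $T_2$ to $S_2' = S_2(S_1, T_2)$. The crucial point is that now both arguments $S_1$ and $S_2'$ are out- or in-directed, so $\R(S_1, S_2') \le R^*$, and the hypothesis $M \ge 16(\R(S_1, S_2') + |T_2|)$ follows from $M \ge 16(R^* + |T_2|)$. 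This gives $\R(S_1, T_2) \le \max\{4|T_1|,\, 16(R^* + |T_2|)\} + O(1)$.

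Plugging back, $16(\R(S_1, T_2) + |T_1|) \le 16 \cdot 4|T_1| + 16 \cdot 16(R^* + |T_2|) + 16|T_1| + O(1) = 80|T_1| + 256(R^* + |T_2|) + O(1)$, which is at most $N = 256(R^* + |T_1| + |T_2|)$ since $80 \le 256$, so the condition required by the outer application of Lemma \ref{lem:reorient-one} is indeed satisfied and we conclude. The main obstacle is bookkeeping of the constants through the nested reductions; the observation that makes the factor $16^2$ work out is that after two successive applications of Lemma \ref{lem:reorient-one} both arguments are out- or in-directed, allowing us to replace the residual Ramsey number by $R^*$ without further recursion.
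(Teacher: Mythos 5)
Your proposal is correct and follows essentially the same route as the paper: the paper proves the intermediate claim $\R(T_1,T_2)\le 16(\R(S_1,T_2)+|T_1|)$ via the same dichotomy (one colour dense, use Lemma \ref{lem:bidirect-embed}; otherwise apply Lemma \ref{lem:reorient-one} with $\eps=1/2$) and then applies it a second time to $(S_1,T_2)$, exactly as you do, only with slightly different constant bookkeeping that yields the same factor $16^2$. Your side remark that a single application of Lemma \ref{lem:reorient-one} suffices when exactly one tree is out- or in-directed is imprecise (one still needs the density fallback), but it is harmless since your general argument covers that case anyway.
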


\begin{proof}
Let $S_1=S_1(T_1,T_2)$, so $S_1$ is the out or in-directed subtree $S$ of $T_1$ which maximizes $\R(S,T_{2}).$

\begin{claim}\label{claim:reorient-intermediate}
$\R(T_1, T_2) \le 16(\R(S_1,T_2)+|T_1|)$.
\end{claim}

\begin{proof}
	Let $n=16(\R(S_1,T_2)+|T_1|)$, and let $G$ be a $2$-colouring of $\dircomp{n}$.
	If there are at least $\frac{1}{2}\binom{n}{2}$ edges in red, the proof follows from Lemma \ref{lem:reorient-one} (with $k=2$, $j = 1$ and $\eps = 1/2$). Otherwise, there are at least $\frac{3}{2}\binom{n}{2}$ blue edges. Lemma \ref{lem:bidirect-embed} implies that there is a blue copy of any tree of order $n/4 \ge |T_2|$; in particular, $G$ has a blue copy of $T_2$.
\end{proof}

Let $S_2=S_2(S_1,T_2)$. Applying Claim \ref{claim:reorient-intermediate} we get, $\R(S_1, T_2) \le 16(\R(S_1,S_2)+|T_2|)$. Hence,
\begin{align*}
	\R(T_1, T_2) 
	& \le 16\big( \R(S_1, T_2) + |T_1| \big) \\
	& \le 16\Big( 16\big(\R(S_1, S_2) + |T_2|\big) + |T_1|\Big) \\
	& \le 16^2 \big(\R(S_1, S_2) + |T_1| + |T_2|\big),
\end{align*}
as desired.
\end{proof}

The following corollary, which is a special case of Theorem \ref{thm:complete-trees-m}, illustrates how we can use Lemma \ref{lem:reorientation}, and will prove useful in its own right.
\begin{cor}\label{cor:oriented-paths}
Let $P$ and $Q$ be arbitrarily oriented paths. Then
$$\R(P,Q) \le 2\cdot 16^2(|P|+|Q|).$$
\end{cor}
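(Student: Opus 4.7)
The plan is to apply Lemma~\ref{lem:reorientation} and then close the bound using Theorem~\ref{thm:williamson}. Invoking Lemma~\ref{lem:reorientation} with $T_1=P$ and $T_2=Q$ yields
$$\R(P, Q) \le 16^2 \bigl( \R(R_1, R_2) + |P| + |Q| \bigr),$$
for some in- or out-directed subtrees $R_1, R_2$ of $P, Q$ respectively. It therefore suffices to show $\R(R_1, R_2) \le |P| + |Q|$, which, substituted above, yields the stated bound $2 \cdot 16^2(|P|+|Q|)$.

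Since $P$ and $Q$ are oriented paths, every subtree of $P$ or $Q$ is again a sub-path. An in- or out-directed sub-path is either a directed path (when the designated root sits at an endpoint) or a ``two-block'' oriented path whose edges all emanate from, or converge to, an interior root; that is, a V-shape or a $\Lambda$-shape. In the clean case when both $R_1$ and $R_2$ are directed paths, Theorem~\ref{thm:williamson} immediately gives $\R(R_1,R_2) \le |R_1|+|R_2|\le |P|+|Q|$.

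The remaining case, say $R_1 = V(a,b)$ a V-shape with arms of lengths $a$ and $b$, needs more care. The plan is to combine Lemma~\ref{lem:high-deg-embed} with Theorem~\ref{thm:williamson}: in any $2$-colouring of $\dircomp{N}$ (for $N$ slightly larger than $|R_1|+|R_2|$) containing no blue copy of $R_2$, Lemma~\ref{lem:high-deg-embed} forces all but a few vertices to have red out-degree at least of order $|R_1|+|R_2|$. Picking such a vertex $v$, I apply Theorem~\ref{thm:williamson} inside $v$'s red out-neighbourhood $N_R^+(v)$ to extract a red directed path $w_1\to w_2 \to \cdots \to w_{a+b}$; splitting it at index $a$ produces two vertex-disjoint red out-directed arms from $v$, namely $v\to w_1\to\cdots\to w_a$ and $v\to w_{a+1}\to\cdots\to w_{a+b}$, which together form the required red V-shape. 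The $\Lambda$-shape case is handled symmetrically using red in-neighbourhoods, and mixed combinations reduce to one of the above.

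The hard part is exactly this spider case, since a V-shape is not a subgraph of any directed path on the same vertex set, so Theorem~\ref{thm:williamson} cannot be applied to $R_1$ itself. The trick is that although we cannot realise a V-shape \emph{as} a long red directed path, we can realise it as ``root $v$ plus a long red directed path in $N_R^+(v)$ cut into two pieces,'' and Williamson's theorem does deliver the latter once Lemma~\ref{lem:high-deg-embed} has supplied a suitable high-degree $v$.
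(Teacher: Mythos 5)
Your skeleton is the same as the paper's: apply Lemma \ref{lem:reorientation} and then bound $\R(R_1,R_2)$ by Theorem \ref{thm:williamson}. The paper finishes in one line, asserting that every in- or out-directed subtree of an oriented path is a directed subpath, necessarily of length at most $l(P)$ (resp.\ $l(Q)$), so that $\R(R_1,R_2)\le l(P)+l(Q)+1<|P|+|Q|$ and the factor $2$ drops out. You instead allow $R_1,R_2$ to be V- or $\Lambda$-shaped (a sub-path rooted at an interior source or sink) --- a defensible reading of the definition of out-/in-directed --- but then the burden is to handle that case within the stated constant, and this is exactly where your argument has genuine gaps.

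First, the quantitative claim you reduce to, $\R(R_1,R_2)\le|P|+|Q|$, is not delivered by your sketch in the V-shape case. Lemma \ref{lem:high-deg-embed} only guarantees a vertex of red out-degree at least $k+1$ once $N\ge 2k+2|R_2|$, and to run Williamson inside that out-neighbourhood you need $k+1\ge\R(\dirpath{a+b},\dirpath{|R_2|})\approx |R_1|+|R_2|$; so your argument needs $N$ of order $2|R_1|+4|R_2|$, not ``slightly larger than $|R_1|+|R_2|$''. Feeding this back into Lemma \ref{lem:reorientation} still gives a linear bound, but with a constant exceeding $2\cdot 16^2$, so the corollary as stated is not proved. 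Second, ``mixed combinations reduce to one of the above'' is not correct when neither $R_1$ nor $R_2$ is a directed path: inside $N_R^+(v)$ you cannot invoke Theorem \ref{thm:williamson} to extract a red $\dirpath{a+b}$, because the alternative Williamson offers is a long \emph{blue directed path}, and a directed path contains no V-shape, so this alternative is not excluded by the absence of a blue $R_2$; you would need to repeat the high-degree reduction in blue inside the neighbourhood, which you do not do and which degrades the constant further. Your splitting trick itself (root plus a directed path in the out-neighbourhood cut into two arms) is sound, but to recover the stated bound you should either argue, as the paper does, that only directed subpaths need to be considered (for instance by inspecting the decomposition behind Lemma \ref{lem:reorient-one}, where for a path the root may be taken at an endpoint so every piece produced is a directed subpath), or redo the V-shape estimate so that it does not lose the extra factor.
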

\begin{proof}
Note that any out-~or in-directed subtree of an oriented path is a directed path. The in- or out-directed subtrees $R_1$ and $R_2$ of $P$ and $Q$ which maximize $\R(R_1,R_2)$ are directed subpaths of maximum length, so by Lemma \ref{lem:reorientation} and by Theorem \ref{thm:williamson},
\begin{align*}
	\R(P,Q) 
	& \le 16^2\Big(\R\left(\dirpath{l(P)},\dirpath{l(Q)}\right)+|P|+|Q|\Big) \\
	& \le 16^2(l(P)+l(Q)+1+|P|+|Q|) \\
	& < 2\cdot 16^2(|P|+|Q|),
\end{align*}
as desired.
\end{proof}

The following simple lemma will provide us with a way to control the degrees of an out-directed tree using the number of \emph{out-leaves}, which we define to be leaves with out-degree $0$. We remark that, as in the remainder of this section, we consider only out-leaves in order to avoid the possibility of counting the root when it is a leaf.
\begin{lem}\label{lem:deg-leaf}
Let $v_1, \ldots, v_k$ be some vertices of an out-directed rooted tree $T$ of size $n$, with $a$ out-leaves. If $v_i$ has out-degree $d_i$ then 
$$d_1+d_2+\ldots+d_k-k+1 \le a.$$
\end{lem}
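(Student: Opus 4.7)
I would prove this by induction on $k$. The base case $k=1$ reduces to the observation that every non-empty subtree of an out-directed tree contains at least one out-leaf (follow out-edges until you cannot). The $d_1$ subtrees rooted at the children of $v_1$ are pairwise disjoint, so they contribute at least $d_1$ distinct out-leaves, giving $d_1 \le a$.

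For the inductive step, I would reorder the vertices so that $v_k$ has maximum depth among $\{v_1,\ldots,v_k\}$; in particular, no other $v_i$ is a proper descendant of $v_k$. If $d_k = 0$ the inequality follows directly from the inductive hypothesis applied to $v_1,\ldots,v_{k-1}$. Otherwise, form $T'$ by deleting the strictly descendant vertices of $v_k$ from $T$. The plan is to apply the inductive hypothesis to $T'$ together with the $k-1$ vertices $v_1,\ldots,v_{k-1}$, with the same out-degrees $d_1,\ldots,d_{k-1}$, and to control the drop in the number of out-leaves.

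The two bookkeeping claims I would verify are:
\begin{itemize}
\item The out-degrees of $v_1,\ldots,v_{k-1}$ are unchanged in $T'$. A child $c$ of some $v_i$ ($i<k$) is removed only if $c$ is a strict descendant of $v_k$; but then either $v_i$ is a proper ancestor of $v_k$, which forces $v_k = c$ and contradicts ``strict'', or $v_i$ equals or is a descendant of $v_k$, contradicting the maximum-depth choice of $v_k$.
\item Writing $a'$ for the number of out-leaves of $T'$, one has $a' \le a - d_k + 1$. Indeed, applying the base-case observation inside the subtree rooted at $v_k$ shows that at least $d_k$ out-leaves of $T$ lie among the strict descendants of $v_k$ and are therefore removed; meanwhile, the only vertex whose out-degree drops to $0$ after deletion is $v_k$ itself (any other such vertex would have to have a child among the strict descendants of $v_k$, which by the same argument as in the previous bullet is impossible).
\end{itemize}
Combining, the inductive hypothesis gives $d_1+\cdots+d_{k-1} \le a' + (k-1) - 1 \le (a - d_k + 1) + k - 2$, which rearranges to $d_1+\cdots+d_k \le a + k - 1$.

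The main (mild) obstacle is purely bookkeeping: ensuring that pruning below $v_k$ neither alters the recorded out-degrees of the remaining $v_i$ nor creates spurious new out-leaves. Both statements reduce to the maximum-depth choice of $v_k$, and once these are in place the inductive step is immediate.
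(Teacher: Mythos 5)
Your proof is correct, but it takes a different route from the paper's. The paper argues globally in one stroke: writing $d_v$ for the out-degree of each of the $n$ vertices, $\sum_v d_v = n-1$ (the number of edges), so $\sum_v (d_v - 1) = -1$; since $d_v - 1 \ge 0$ except at the $a$ out-leaves, where it equals $-1$, the partial sum $\sum_{i\le k}(d_i-1)$ is at most $a-1$, which is exactly the claim. You instead induct on $k$: prune the strict descendants of a deepest marked vertex $v_k$, check that the out-degrees of $v_1,\ldots,v_{k-1}$ survive, and show the pruning destroys at least $d_k$ out-leaves while creating at most one new one (namely $v_k$). Your bookkeeping is sound — the only slight imprecision is that ruling out a new out-leaf $u \ne v_k$ is not literally "the same argument as the previous bullet": for a general vertex $u$ the case that $u$ is a strict descendant of $v_k$ is excluded because such a $u$ has been deleted, not by the maximum-depth choice; also note that both your argument and the paper's tacitly use that the $v_i$ are distinct, as the lemma intends (your case $v_i = v_k$ is excluded by distinctness rather than by depth). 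The trade-off: the paper's counting identity is a three-line proof with no case analysis, while your induction is longer but purely structural, and the pruning step makes explicit the local fact (each child-subtree of a vertex contains an out-leaf) that underlies the inequality.
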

\begin{proof}
Let $v_{k+1}, \ldots, v_n$ be the remaining vertices of the tree (with out-degrees marked by $d_i$). Then,
\begin{align*}
	d_1+d_2+\ldots+d_k-k+1 
	& = 1+(d_1-1)+\ldots+(d_k-1) \\
	&\le 1+(d_1-1)+\ldots +(d_n-1)+a \\
	& = 1+d_1+\ldots+d_n-n+a \\
	& = 1+n-1-n+a=a.
\end{align*} 
The first inequality follows as $d_i-1$ is non-negative, unless it comes from an out-leaf in which case it is $-1$. The penultimate equality follows since $d_1+\ldots+d_n$ is the number of edges, which is $n-1$.
\end{proof}


\subsection{Path vs.\ Tree}\label{sec-path-vs-tree}
In this subsection we prove a special case of Theorem \ref{thm:complete-trees-m} for two colours, where one of the trees is an oriented path. 
The following is the key result of this subsection.
\begin{thm}\label{thm:dirpath-vs-outtree}
   Let $T$ be an out-directed tree on $m$ vertices. Then the following holds for every $n$, where $c$ is some absolute constant.
    $$\R(\dirpath{n},T) \le c(n+m).$$
\end{thm}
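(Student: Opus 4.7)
My plan is to argue by contradiction. Assume that $\dircomp{N}$ with $N = c(n+m)$ has been $2$-coloured (red/blue) with neither a red $\dirpath{n}$ nor a blue copy of $T$, and derive a contradiction for $c$ a sufficiently large absolute constant.

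The first step is to exploit the absence of a red $\dirpath{n}$ to produce useful structure. Applying the DFS lemma (Lemma~\ref{lem:dfs}) to the red graph with $\dirpath{n}$ (which is out-directed) gives a partition $\{U,X,Y\}$ with $|U|<n$, $|X|=|Y|\ge (N-n)/2$, and every $x\in X$ having fewer than $n$ red out-neighbours in $Y$; equivalently, every $x\in X$ has at least $|Y|-n+1$ blue out-neighbours in $Y$. Following the outline hinted at in the paper, I would refine this into a bipartition $V=A\sqcup B$ such that all edges from $B$ to $A$ are blue---a "cut with no red edges"---by taking $X$ and $Y$ (discarding $U$), or by further layering using the longest-red-path function on a maximal acyclic subgraph of the red graph (which is valid since the longest red path has $<n$ edges by GHRV, Theorem~\ref{thm:ghrv}).

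The second step is to use this cut to embed $T$ in blue. I would combine it with Lemma~\ref{lem:high-deg-embed} and its in-degree symmetric variant: the absence of a blue $T$ forces most vertices to have both small blue out-degree and small blue in-degree, which in the presence of the red-free cut gives high blue density within large subsets of $A$ and $B$. I would then apply Lemma~\ref{lem:bidirect-embed} inside such subsets to produce many bidirected blue edges, supplying enough "local" blue structure for an embedding of $T$. Given that $T$ is out-directed, the embedding proceeds level by level (BFS from the root), using blue cross-cut edges at designated transition levels and bidirected blue edges within each side to handle sub-pieces of $T$; each new child of an already embedded vertex is placed at an unused blue out-neighbour of its parent's image.

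The main obstacle is the depth of $T$: it can be up to $m-1$, whereas a naive depth-by-depth layering across the cut would cost an exponential blow-up in the bound. The key technical difficulty is therefore to avoid this blow-up, by embedding long within-layer pieces of $T$ inside a single side of the cut using the bidirected blue density (Lemma~\ref{lem:bidirect-embed}) and only crossing the cut a bounded number of times. Calibrating this across the two extreme regimes---$n\gg m$ (where we essentially embed $T$ inside one side of the cut, which is large) and $m\gg n$ (where the cut sides are both moderate in size and we must use high blue degrees extracted from the no-blue-$T$ assumption)---is what delivers the desired linear bound $c(n+m)$, and this balancing is where the bulk of the technical work will lie.
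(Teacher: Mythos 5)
Your first step does not actually produce the cut you need, and this is where the proposal breaks. The DFS partition (Lemma~\ref{lem:dfs}) applied to the red graph only bounds the number of red out-neighbours each vertex of $X$ has in $Y$; it does not make the cut red-free. The ``longest red path'' layering you invoke as a repair requires the red graph to be acyclic (for a red edge $uv$ one cannot conclude $f(v)\ge f(u)+1$ when the longest red path ending at $u$ passes through $v$), and restricting to a maximal acyclic red subgraph discards exactly those red edges that may cross your cut in the forbidden direction. Worse, a red-free cut with two large sides need not exist under the no-red-$\dirpath{n}$ hypothesis alone: if the red graph is a bidirected star centred at one vertex, there is no red path on three vertices, yet every bipartition has a red edge crossing it in each direction. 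In the paper the red-free partition (Claim~\ref{claim:partition}) is established only in the regime $n\ge 16m$, and its proof leans essentially on the no-blue-$T$ hypothesis as well: one first shows the sets $A$ (high red out-degree) and $B$ (high red in-degree, via DFS applied to the \emph{blue} graph) are large, and then rules out red paths from $B$ to $A$ by an induction on $n+m$ applied to the red neighbourhoods at the two ends of such a path. The complementary regime $n\le 16m$ is handled by a different argument (the $2$-core of $T$ together with Theorem~\ref{thm:williamson} and a greedy red path), and your proposal has no substitute for it.

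Even granting the cut $\{U,W\}$ with all $U\to W$ edges blue, the embedding plan does not work. Absence of a red $\dirpath{n}$ is perfectly compatible with each side being internally an $(n-1)$-partite bidirected red graph, whose blue graph inside that side is a disjoint union of cliques of size roughly $|U|/(n-1)=O(c)$; so for $n\ge 16m$ you cannot embed blue pieces of $T$ of more than constant size ``within a single side'' via Lemma~\ref{lem:bidirect-embed}, and the claim that no blue $T$ forces most vertices to have small blue out- and in-degree is the reverse of what Lemma~\ref{lem:high-deg-embed} yields (it lower-bounds red out-degrees). The difficulty is also not the depth of $T$: crossing from $U$ to $W$ is free, the problem is returning from $W$ to $U$ in blue. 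The paper's Case~2 uses only the blue cut edges together with a greedily extracted sequence of vertices $w_i\in W$ of high blue out-degree back into $U$, and when that extraction stops early it does not embed at all; instead it reaches a contradiction by counting, comparing $|U''|$ and $|W'|$ with the inductive bound $c(n_i+m_i-2)$ for the longest red paths and largest embeddable blue subtrees inside them, split into two subcases according to $d_1+\dots+d_k$ versus $4(k-1)$ and controlled by the out-leaf count of Lemma~\ref{lem:deg-leaf}. This global induction on $n+m$ and the leaf-counting argument are the core of the proof and are missing from your sketch, so as it stands the proposal does not close.
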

\begin{proof}
	Let $c = 79$.
    We use induction on $n+m$ to prove that $\R(\dirpath{n},T) \le c(n+m-2)$ given $n+m \ge 3$, or equivalently $n>1$ or $m>1$; we require this slightly stronger statement in order to avoid separate treatment of small cases, but it requires us to make sure that at least one of the trees we are invoking induction on has order greater than one. For the basis notice that case $n=1$ or $m=1$ is trivial, hence we may assume that $n, m \ge 2$.
    
    Let $N=c(n+m-2)$. Let us assume, for the sake of contradiction, that there is a $2$-colouring of $\dircomp{N}$, whose vertex set we denote by $V$, with no red $\dirpath{n}$ or blue $T$. We consider two cases: $n \le 16m$ and $n \ge 16m$; the proof in the first case is relatively simple, whereas the proof in the second case requires significant effort.
    
    \vspace{-0.3cm}
    \subsubsection*{Case 1. $n \le 16m$.}
        
        Let $T'$ be the $2$-core of $T$ (see Definition \ref{def:core}), so $T'$ is a directed path. Let $S$ be the set of vertices with blue out-degree at least $c(n + m/2 - 2) + m$. If $|S|\ge n+m$, then, as $\R(\dirpath{n},\dirpath{m}) \le n+m-2$ (see Theorem \ref{thm:williamson}), we can find a blue $T'$ in $S$. We now show that it is possible to embed the trees of the forest $T \setminus V(T')$ one by one in the blue out-neighbourhood of the corresponding parent in $T'$. Indeed, let $R$ be such a tree, and let $u \in T'$ be the parent of the root of $R$. Then $u$ has at least $c(n + m/2 - 2)$ unused blue out-neighbours, hence by induction (recall that $n \ge 2$, so we can use induction), and by the assumption that there is no red $\dirpath{n}$, we find a blue $R$. Having done this for all the trees in $T \setminus V(T')$, we obtain a blue $T$, a contradiction.
    
        We are left with the case where $|S| < n+m$. Every vertex not in $S$ has red out-degree at least $N - 1 - (c(n + m/2 - 2) + m) = cm/2 - m - 1$. So, in the graph induced by $V \setminus S$ every vertex has red out-degree at least $ cm/2 - m - 1 - (n + m - 1) = cm/2 - 2m - n \ge (c/2 - 2)n/16 - n \ge n$ where we used $n \le 16m$ and $c \ge 68$. But this implies that there is a red $\dirpath{n}$, a contradiction. This completes the proof of Theorem \ref{thm:dirpath-vs-outtree} in the first case.
    
	\vspace{-0.3cm}
	\subsubsection*{Case 2. $n \ge 16m$.}
    
	We start by proving the following claim which gives us a handle on the structure of the colouring.
		\begin{claim}\label{claim:partition}
            	There is a partition $\{U, W\}$ of $V$, such that there are no red edges from $U$ to $W$, and $|U|,|W| \ge N/5$.
		\end{claim}
    
        	\begin{proof}
            	Let $A$ be the set of vertices with red out-degree at least $3N/4$. If $|A| \le N/5$ then in $V \setminus A$ every vertex has blue out-degree at least $N/4-N/5=N/20=c(n+m-2)/20 \ge c\cdot 15m/20 >m$. Hence, we can embed a blue $T$, a contradiction. It follows that $|A| \ge N/5$.
            
            	Let $B$ be the set of vertices with red in-degree at least $N/3$. By Lemma \ref{lem:dfs} (DFS), applied to the graph of blue edges, as there is no blue $T$ and $|T| = m$, we can find two disjoint sets $X$ and $Y$ satisfying $|X|=|Y| \ge N/2-m/2$, such that there are at most $m|X|$ blue edges from $X$ to $Y$. Note that every vertex in $Y \setminus B$ has at least $|X|-N/3$ blue in-neighbours in $X$. Hence, by double counting blue edges from $X$ to $Y$, we obtain the following inequality.
            	$$m|X| \ge (|Y|-|B|)\cdot (|X|-N/3).$$
            	Since $N = c(n + m - 2) \ge 15m$, we have			
            	\begin{align*}
 	         	|B| 
 	         	& \ge |X|\left(1-\frac{m}{|X|-N/3}\right) \\
 	         	& \ge (N/2-m/2)\left(1-\frac{m}{N/6-m/2}\right) \\
 	         	& \ge \frac{2N}{5}\left(1-\frac{m}{5m/2-m/2}\right) = \frac{N}{5}.
 	         \end{align*}
            
            	Suppose that there is a red path $P$ from a vertex $v$ in $B$ to a vertex $u$ in $A$; write $|P|=l$ and note that $l<n$ as there is no red $\dirpath{n}$. Let $S_v$ be the set of red in-neighbours of $v$ which are not in $P$; as $v \in B$ and $|P| \le n$, we have $|S_v| \ge N/3 - n$. Let $Q_1$ be the longest red directed path in $S_v$ and denote its number of vertices by $n_1$; note that $n_1 + l < n$, as $Q_1$ and $P$ combine to a red directed path of order $n_1 + l$. Similarly, let $S_u$ be the set of red out-neighbours of $u$ which are not in $P$ or in $Q_1$. Then, as $u \in A$, we have $|S_u| \ge 3N/4 - n$. Let $Q_2$ be the longest red path in $S_u$, and denote its number of vertices by $n_2$. Note that we can combine $Q_1$, $P$ and $Q_2$ (in this order) to form a red path of order $l + n_1 + n_2 < n$ (see Figure \ref{fig:comb-paths}). Also, by induction (which applies here as $m \ge 2$), we have 
            	\begin{align*}
				& N/3 - n \le |S_v| \le c(n_1 + m - 1), \\
				& 3N/4 - n \le |S_u| \le c(n_2 + m - 1).
			\end{align*}
			Summing up the two inequalities and recalling that $N = c(n + m - 2)$, we obtain the following.
			\begin{align*}
				13c(n + m - 2)/12 - 2n = 13N/12 - 2n \le c(n + 2m - 3).
			\end{align*}
			This, in turn, shows that 
			\begin{align*}
				(c/12 - 2)n \le 11c m/12 - 5c/6 \le 11cm / 12.
			\end{align*}
			Hence,
			\begin{align*}
				n \le \frac{11c/12}{c/12 - 2} \cdot m = \frac{11c}{c - 24} \cdot m < 16m,
			\end{align*}
			a contradiction. It follows that there are no red paths from $B$ to $A$.
            
            Let $U$ be the set of vertices that can be reached by following a red path from $B$, and let $W = V \setminus U$. By the definition of $U$, we have no red edges from  $U$ to $W$. As $B \subseteq U$ and $A \subseteq W$ we also have $|U|,|W| \ge N/5$, as desired. 
	\end{proof}
		\begin{figure}[h]
			\caption{An illustration of the construction in \Cref{claim:partition}.}
			\includegraphics[]{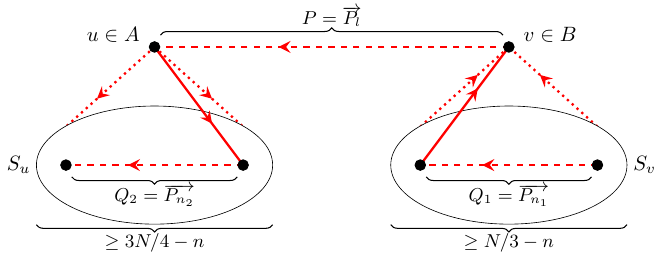}
			\label{fig:comb-paths}
	\end{figure}
    
    Let $(U,W)$ be a pair as given by the above claim. Let $a$ be the number of out-leaves of $T$, let $d_1 \ge \ldots \ge  d_{m-a}$ be the out-degrees of the vertices of $T$ with out-degree at least $1$. 
    Consider the following process, which we run for $i$ from $1$ to at most $m - a$. At step $i$, we take $w_i$ to be the vertex in $W$ whose blue out-degree towards $U$ is largest. If $d^+_U(w_i)<d_i$ we stop; otherwise, we remove $w_i$ from $W$ and remove from $U$ a set $B_i$ of $d_i$ blue out-neighbours of $w_i$ in $U$ (see Figure \ref{fig:U-W-2}).
    
    \begin{figure}[h]
        \caption{An illustration of the state of the above process at step $k$.}
        \includegraphics{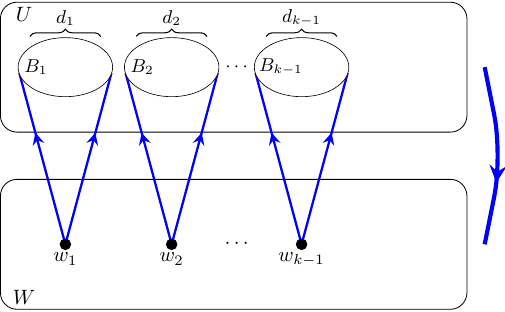}
        \label{fig:U-W-2}
        \end{figure}
            
    \begin{claim}
    If in the above process we have $d^+_U(w_i) \ge d_i$ for all $i=1,\ldots,m-a$, that is, the process does not terminate early, then there is a blue copy of $T$.
    \end{claim}
    \begin{proof}
    		Let $f$ be an injective (and hence surjective) function from the vertices of $T$ with out-degree at least $1$ to $[m - a]$, such that the out-degree of a non-out-leaf vertex $v$ is $d_{f(v)}$.
        
        We now embed the tree as follows. We say that a vertex is \textit{fully embedded} if all its children in $T$ are embedded; we say that it is \textit{partially embedded} if none of its children are embedded. We will embed $T$, such that at each stage, every embedded vertex is either fully or partially embedded. We start by embedding the root $r$ in $w_{f(r)}$. At each step we choose a partially embedded vertex $v$ and embed its children, according to the following plan.
        \begin{enumerate}
        \item \label{itm:3.1} If $v\in U$ we embed its children one by one, according to the following instructions, where $u$ is a child of $v$.
        \begin{enumerate}
            \item \label{itm:3.1a} If $u$ is not an out-leaf of $T$, we embed it in $w_{f(u)}$.
            \item \label{itm:3.1b} If $u$ is an out-leaf of $T$, then we embed it in an arbitrary unused vertex in $W$ distinct from all $w_i$.
        \end{enumerate} 
        \item \label{itm:3.2} If $v\in W$, we embed its children in $B_{f(v)}$.
        \end{enumerate}
		Note that in each stage of this process, every vertex is indeed partially or fully embedded. Furthermore,        
        	it is easy to see that the process runs until all vertices of $T$ are embedded, as each $w_i$ can only be used to embed $f^{-1}(i)$. We thus obtain a blue copy of $T$, as all edges of $T$ are embedded in edges from $U$ to $W$ (which are all blue), or in edges from some $w_i$ to $B_i$, which are also blue. This completes our proof that a blue copy of $T$ exists.
    \end{proof}
    
    By this claim we are done unless there is a $k \le m-a$ such that every vertex from $W'=W \setminus \{w_1, \ldots, w_{k-1}\}$ to $U'=U \setminus (B_1 \cup \ldots \cup B_{k-1})$ has blue out-degree smaller than $d_{k}$.
    
    Let $n_2$ be the order of a longest red path in $W'$. Denote the end of this path by $u$, and denote by $U''$ the red out-neighbourhood of $u$ in $U'$ (see Figure \ref{fig:U-W-1}). As $u$ has blue out-degree in $U'$ of at most $d_k,$ we have $|U \setminus U''|\le |U \setminus U'| + d_k=d_1+\ldots+d_k$. Let $n_1$ be the order of a longest red path in $U''$. By our assumptions, $n_1+n_2 <n$.
    \begin{figure}[h]
        \caption{An illustration of the above setting.}
        \includegraphics{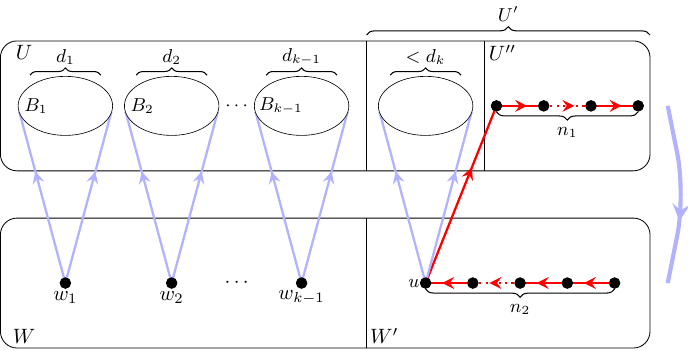}
        \label{fig:U-W-1}
        \end{figure}
    
    We now distinguish two cases, depending on how $d_1+\ldots +d_{k}$ and $k-1$ compare.
    \vspace{-0.3cm}
    \subsubsection*{Case 2a. $d_1+d_2+\ldots+d_k \le 4(k-1)$.}
            
    		We now embed $T$ in the blue graph in stages; denote by $T_1$ the currently embedded subtree of $T$. Initially, let $T_1$ be a largest subtree of $T$, rooted at $r$ (which is the root of $T$), which can be found in blue in $U''$; write $m_1=|T_1|$. We say that an embedded vertex of $T$ is \textit{incomplete} if not all its children in $T$ have already been embedded. We repeat the following. At step $j$ we take an unembedded child $v_j$ of an incomplete vertex and embed it in $w_i$, where $i\le k-1$ is the largest $i$, such that $d_i \ge d^+_T(v_j)$ and $w_i$ is not already used; note that we can always do this by the following Claim \ref{claim:majorising} applied to $i_j=f(v_j)$. We then embed the children of $v_j$ in $B_i$. Note that throughout this process, we preserve the property that all incomplete vertices are in $U$. 
    		\begin{claim}\label{claim:majorising}
    		Let $\{i_1,\cdots, i_{k-1}\} \subseteq [m-a]$. Let $g_1=\min(i_1,k-1)$, then for each $j\le k-1$ there exists an $i \le \min(i_j,k-1)$ not in $S_{j-1} := \{g_1,\ldots,g_{j-1}\}$, and we define $g_j$ to be the largest such $i$.
    		\end{claim}
    		\begin{proof}
    		We inductively show that at step $j$, there is a $t_j$ such that the following holds.
    		$$S_{j} = \bigcup_{l\le j, \,\, i_l \le k-1} \{i_l\} \:\: \bigcup \:\: \{t_j,t_j+1,\ldots,k-1\}.$$
    		For the basis, if $i_1<k-1$ then $S_1 = \{i_1\}$ so the claim holds with $t_1=k$; otherwise, $t_1=k-1$ works. We assume this holds at step $j-1$. If $i_j\ge t_{j-1}$ then there is an $l<t_{j-1}$ such that $l\notin S_{j-1}$ (as otherwise $S_{j-1}=[k-1]$), and as $g_j$ is the largest such $l$ the claim holds with $t_j=l$. If $i_j < t_{j-1}$, $i_j$ is not used as $i_j \neq i_l$ for any $l<j$, so $g_j=i_j$ and $t_j=t_{j-1}$ works. Claim \ref{claim:majorising} easily follows.
    		\end{proof}
    		
        Hence, we embedded a subtree $T_1$ of $T$ of order at least $m_1+k-1,$ with all incomplete vertices in $U$. Let $m_2$ be the largest integer such that any out-tree of order up to $m_2$ can be found in blue in $W'$. Note that if we can embed any out-tree of order up to $m_2$ we can also embed any out-directed forest of order up to $m_2$, as any such forest is a subgraph of an out-tree of the same order. We conclude that $m_1+k-1+m_2 < m$ as, otherwise, we can embed $T \setminus V(T_1)$ within $W'$.  As all edges from $U$ to $W$ are blue and all incomplete vertices of $T_1$ are in $U$, this would combine to create a blue $T$.
        
        Induction implies that $|W'|< c(n_2+m_2)$ and $|U''| < c(n_1+m_1)$ (note that $n_1 \ge 2$, as otherwise there are no red edges in $U''$ and a blue copy of $T$ could easily be found; similarly, $n_2 \ge 2$; hence, we may apply the induction hypothesis). Combining these inequalities, we get the following.
        \begin{align*}
        N = \,\, &  |U|+|W| = |U''|+|U\setminus U''|+|W'|+|W \setminus W'| \\
             < \,\, & c(n_1+m_1)+d_1+\ldots+d_k+c(n_2+m_2)+k-1 \\
             \le \, \, &  c(n_1+n_2)+c(m_1+m_2)+5(k-1) \\
             \le \, \, & c(n_1 + n_2)+c(m_1+m_2+k-1) \\
             \le \, \, & c(n - 1) + c(m - 1) = c(n+m-2)=N,
        \end{align*}        
        a contradiction.
    
    \vspace{-0.3cm}    
    \subsubsection*{Case 2b. $d_1+d_2+\ldots+d_k > 4(k-1)$.}
    
        Let $T'$ be the subtree of $T$ consisting of vertices whose subtrees have more than $a/2$ out-leaves. As each vertex can have at most one child in $T'$, $T'$ must be a directed path.
        
        As $|U''| \ge N/5-m \ge n+m$, we can find a blue $T'$ in $U''$ by Theorem \ref{thm:williamson} (path vs.\ path). Let $T''$ be the subtree of $T$ obtained by removing all its out-leaves. Let $T_1$ be a maximal subtree of $T''$, containing $T'$, which can be found in blue in $U''$. Let $m_1=|T_1|$.
        
        If $T_1=T''$ then we simply embed out-leaves of $T$ arbitrarily in $W$, and we thus find a blue $T$. Otherwise, by the inductive assumption $|U''| < c(m_1+n_1)$ (as before, $n_1 \ge 2$, so we can apply induction).
        
        Let $S_1, \ldots, S_t$ be the trees in the forest $T\setminus T_1$. By definition of $T',$ each $S_i$ has at most $\frac{a}{2}$ out-leaves. 
        If we can find a copy of $S_1$ in $W'$, we remove it from $W'$. We repeat, so at step $i$ we have removed vertex-disjoint blue copies of $S_1, \ldots, S_{i-1}$ from $W'$, and, if we can find a blue $S_i$, we remove it.
        If the process runs until $i=t$, using the fact that all edges from $U$ to $W$ are blue, we can join $T_1$ with the copies of the $S_i$'s that we found to obtain a blue copy of $T$, a contradiction. So there is an $i$, such that we cannot find a blue $S_i$ in $W'' = W' \setminus (V(S_1) \cup \ldots \cup V(S_{i-1}))$. This implies, by the inductive assumption that $|W''| < c( n_2 + |S_i| - 1)$  (the inductive assumption applies as $n_2 \ge 2$, as before). We obtain the following upper bound on $|W'|$. 
        \begin{align*}
        |W'| < \,\, &  |S_1|+\ldots+ |S_{i-1}|+cn_2+c|S_i|-c \\
             \le \,\, & c(|S_1|+\ldots+|S_t|)-(c-1)(|S_1|+\ldots+|S_{i-1}|+|S_{i+1}|+\ldots+|S_t|)+cn_2-c \\
             \le \,\, & c(m-m_1)-(c-1)a/2+cn_2-c.
        \end{align*}
        
        Where the last inequality follows as $\cup_{i=1}^t S_i$ contains all $a$ out-leaves of $T$ and $S_i$ can contain at most $a/2$ so $\cup_{j \in [t] \setminus \{i\}} S_j$ has at least $a/2$ out-leaves, so, in particular, it has at least $a/2$ vertices.
        
        By Lemma \ref{lem:deg-leaf}, we have $d_1+d_2+\ldots+d_k \le a+k-1 < a+(d_1+\ldots+d_k)/4$, which in turn implies that $d_1+\ldots+d_k < 4a/3.$
        Combining these inequalities we get the following.
        \begin{align*}
        N = \,\, |U|+|W| 
         < \,\, & d_1+\ldots+d_k+c(m_1+n_1)+k-1+c(m-m_1)-(c-1)a/2+cn_2-c \\
         < \,\, & 4a/3+a/3-(c-1)a/2+c(n_1+n_2+1)+cm-2c \\
         \le \,\, &  5a/3-(c-1)a/2+c(n+m-2) \\
         \le \,\, & c(n+m-2)=N,
        \end{align*}
        a contradiction. This completes the proof of Theorem \ref{thm:dirpath-vs-outtree}.
\end{proof}

\begin{cor}\label{cor:path-vs-tree}
Let $P$ be an oriented path and $T$ any tree, then
$$\R(P,T) \le 16^2(c+1)(|P|+|T|),$$
where $c$ is the constant given in Theorem \ref{thm:dirpath-vs-outtree}.
\end{cor}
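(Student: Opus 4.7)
The plan is to mimic the proof of Corollary \ref{cor:oriented-paths}, substituting the path-vs-path Ramsey bound (Theorem \ref{thm:williamson}) with the path-vs-tree bound (Theorem \ref{thm:dirpath-vs-outtree}). The key tool is Lemma \ref{lem:reorientation}, which reduces the Ramsey number $\R(P, T)$ to the Ramsey number of an out- or in-directed subtree of $P$ versus an out- or in-directed subtree of $T$, at the cost of a multiplicative constant $16^2$ and an additive $|P| + |T|$.

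First I would observe that any out- or in-directed subtree $R_1$ of the oriented path $P$ is itself a directed path, so $|R_1| \le |P|$. Similarly, any out- or in-directed subtree $R_2$ of $T$ satisfies $|R_2| \le |T|$. Theorem \ref{thm:dirpath-vs-outtree} directly gives $\R(R_1, R_2) \le c(|R_1| + |R_2|)$ when $R_2$ is out-directed; the in-directed case follows by reversing all edges (which preserves the Ramsey number, turns a directed path into a directed path, and turns an in-directed tree into an out-directed tree). Hence in either case we get $\R(R_1, R_2) \le c(|P| + |T|)$.

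Combining this with Lemma \ref{lem:reorientation}, we obtain
\begin{align*}
    \R(P, T)
    & \le 16^2 \bigl( \R(R_1, R_2) + |P| + |T| \bigr) \\
    & \le 16^2 \bigl( c(|P| + |T|) + |P| + |T| \bigr) \\
    & = 16^2 (c+1)(|P| + |T|),
\end{align*}
as required. There is no real obstacle here: the substantive work has already been done in Theorem \ref{thm:dirpath-vs-outtree} and Lemma \ref{lem:reorientation}, and this corollary simply packages those results together via the observation that out- and in-directed subtrees of an oriented path are directed paths.
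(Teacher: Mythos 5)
Your proposal is correct and follows essentially the same route as the paper: apply Lemma \ref{lem:reorientation}, note that the in- or out-directed subtrees of an oriented path are directed paths, and invoke Theorem \ref{thm:dirpath-vs-outtree} (with the edge-reversal symmetry handling the in-directed case). The paper's proof is just a terser statement of exactly this argument.
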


\begin{proof}
    Theorem \ref{thm:dirpath-vs-outtree} applies for a path vs.\ in-directed tree as well by symmetry, as directed paths are both in and out-directed trees. Corollary \ref{cor:path-vs-tree} follows from Lemma \ref{lem:reorientation}.
\end{proof}

\subsection{Tree vs.\ tree}\label{sec-general-case}

In this subsection we prove the following result, which is the special case of Theorem \ref{thm:complete-trees-m} for two colours.
\begin{thm} \label{thm:complete-trees-two}
	Let $S$ and $T$ be oriented trees. Then $\R(S, T) \le c(|S| + |T|)$, where $c$ is an absolute constant.
\end{thm}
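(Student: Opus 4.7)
The plan follows the two-stage outline indicated at the opening of this subsection. As an initial reduction, \Cref{lem:reorientation} costs only a constant factor in assuming that both $S$ and $T$ are in- or out-directed, and by reversing all orientations if needed I may further assume $S$ is out-directed.

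\textbf{Stage 1.} The first aim is to prove, by induction on $a = \leaf(S)$, the intermediate bound $\R(S, T) \le c_a(|S| + |T|)$, where $c_a$ depends only on $a$. The base case $a = 1$ is \Cref{cor:path-vs-tree} applied to the directed path $S$. For the inductive step, take a $2$-colouring of $\dircomp{n}$ with no blue $T$, where $n$ is a sufficiently large constant times $|S| + |T|$. If red carries fewer than $\tfrac{1}{2}\binom{n}{2}$ edges then \Cref{lem:bidirect-embed} with $\eps = 1/2$ produces a blue copy of every tree on $\lceil n/4\rceil$ vertices, in particular a blue $T$; so \Cref{lem:split} yields a red $(n/16)$-mindegree pair $(X, Y)$. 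I then peel a pendant directed path from $S$: pick an out-leaf $v$, walk back along the (necessarily directed, as $S$ is out-directed) path in $S$ to the nearest vertex $u$ of underlying degree at least $3$, and write $S = S' \cup P$, where $P$ is the pendant path $u \to v$ and $S' = S \setminus (P - u)$ satisfies $\leaf(S') = a - 1$. The inductive hypothesis applied inside $X$ gives a red $S'$ with $u \mapsto u' \in X$; the vertex $u'$ retains at least $n/16 - |S|$ free red out-neighbours in $Y$, and \Cref{cor:path-vs-tree} produces a red $\dirpath{|P|-1}$ there (since no blue $T$ exists), which attaches at $u'$ to complete a red $S$.

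\textbf{Stage 2.} Next I remove the dependence on $a$ via the $k$-core of \Cref{def:core}, together with strong induction on $|S|$. Fix a large absolute constant $k$ and let $S'$ be the $k$-core of $S$ rooted arbitrarily, so $\leaf(S') \le k$ and every component $R$ of $S - V(S')$ is an out-directed subtree with $|R| \le |S|/k$, attached to $S'$ at a single vertex. In a $2$-coloured $\dircomp{n}$ with $n = c(|S|+|T|)$ and no blue $T$, I again extract a red mindegree pair $(X, Y)$, embed $S'$ in red inside $X$ using Stage 1 (at fixed cost $c_k$ independent of $|S|$, since $k$ is fixed), and embed each hanging subtree $R$ in red inside the free red out-neighbourhood of its parent in $Y$, invoking the inductive hypothesis $\R(R, T) \le c(|R|+|T|)$ as $|R| \le |S|/k < |S|$.

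The main obstacle is closing this recursion: the $|T|$ term does not shrink with $|R|$, so na\"ive accounting inflates the constant $c$ at every level. I expect to resolve this in the spirit of Case~2 of \Cref{lem:reduced-one} in the tournament setting: extract a chain $(X, Y, Z)$ of iterated red mindegree pairs, embed the $k$-core through the middle layer, and use the global absence of a blue $T$ together with \Cref{lem:bidirect-embed} and \Cref{lem:high-deg-embed} to certify that each red neighbourhood used for embedding a small $R$ is dense enough that the embedding cost depends only on $|R|$, decoupling $|T|$ from the recursion on $|S|$. A careful accounting then fixes $c$ as an absolute constant and yields $\R(S, T) \le c(|S| + |T|)$.
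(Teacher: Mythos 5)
Your Stage 1 is fine: peeling pendant directed paths against a red mindegree pair, with \Cref{cor:path-vs-tree} as the base case, gives $\R(S,T)\le c_a(|S|+|T|)$ with $c_a$ depending on the number of leaves of the out-directed tree $S$. This is a legitimate alternative to the paper's \Cref{lem:bounded-leaf-vs-any} (which instead removes a branching vertex and embeds it at a vertex of large red in- and out-degree), and after the \Cref{lem:reorientation} reduction it is all that is needed from that stage.

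Stage 2, however, contains a genuine gap, and it is exactly the one you flag. A red mindegree pair obtained from \Cref{lem:split} only guarantees that the parent of a hanging subtree $R$ has about $\frac{c}{16}(|S|+|T|)$ red out-neighbours, while the inductive hypothesis demands $c(|R|+|T|)$ of them; the shortfall sits in the $|T|$ term, so each level of the recursion forces $c$ to grow by a constant factor, and the recursion depth is unbounded in $|S|$. Moreover the proposed repair cannot work as stated: no local argument can make ``the embedding cost depend only on $|R|$,'' because a set of size smaller than $|T|$ may be entirely blue inside without creating a blue $T$, and then it contains no red $R$ at all. Any fix must guarantee that the host neighbourhoods have size at least of order $|R|+|T|$, and this is what the paper's proof of \Cref{thm:outintree-vs-outintree} does: taking the core of the \emph{larger} tree, it calls a vertex red when its red out-degree is at least $c_2(n/16+m)+n$, i.e.\ the full inductive budget, and splits into two cases. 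If there are many red vertices, the core is embedded among them via \Cref{lem:bounded-leaf-vs-any} and each hanging subtree goes into its parent's own large red out-neighbourhood, so the induction closes with the \emph{same} constant $c_2$; if there are few, then after deleting them every vertex has blue out-degree at least $\tfrac{7}{16}N$, and \Cref{lem:dfs} applied to the blue graph, combined with \Cref{lem:high-deg-embed}, produces a blue copy of the other tree outright. This degree-threshold dichotomy and the DFS step in the complementary case are the ideas missing from your sketch; iterated mindegree pairs in the spirit of Case 2 of \Cref{lem:reduced-one} do not supply them, because in the directed two-colour setting the Ramsey bound is additive rather than multiplicative, so shrinking $|R|$ does not shrink the required neighbourhood size accordingly.
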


We first prove the following result, which is a special case of Theorem \ref{thm:complete-trees-two} where one of the trees has a bounded number of leaves.
\begin{lem}\label{lem:bounded-leaf-vs-any}
Let $S$ and $T$ be directed trees of orders $n$ and $m$ respectively. Then $\R(S,T) \le c_1 (n+m)$, where $c_1 = c_1(\leaf(T))$.
\end{lem}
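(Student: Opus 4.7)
The plan is to induct on $\ell = \leaf(T)$. For the base case $\ell \le 2$, the tree $T$ is either a single vertex or an oriented path, and Corollary \ref{cor:path-vs-tree} yields $\R(T, S) \le 16^2(c+1)(|T| + |S|)$, so setting $c_1(2) := 16^2(c+1)$ handles this. For the inductive step, suppose $\ell \ge 3$. Pick any leaf $v$ of $T$ and trace back along the underlying tree to the closest vertex $u$ of degree $\ge 3$ (this exists because $\ell \ge 3$); let $P$ be the path from $u$ to $v$ in $T$ and set $T' := T - (V(P) \setminus \{u\})$. Since $u$ had degree $\ge 3$ in $T$ and loses exactly one neighbour, $T'$ is a tree with $\leaf(T') = \ell - 1$ and $|T'| \le n$.

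Next set $N := c_1(\ell)(n + m)$ with $c_1(\ell) := \max\{16\, c_1(\ell - 1),\ 16 \cdot 16^2(c + 1),\ 4\}$, and consider any $2$-colouring of $\dircomp{N}$ in colours red and blue. We may assume no blue $S$ is present (otherwise we are done) and aim to produce a red $T$. If blue has more than $(1 + 2m/N)\binom{N}{2}$ edges then Lemma \ref{lem:bidirect-embed} already yields a blue tree on at least $m$ vertices, in particular a blue $S$, a contradiction. Otherwise red has at least $(1 - 2m/N)\binom{N}{2} \ge \binom{N}{2}/2$ edges (using $N \ge 4m$), so Lemma \ref{lem:split} with $\eps = 1/2$ gives an $N/16$-mindegree pair $(X, Y)$ inside the red graph.

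It remains to find a red $T$ by embedding $T'$ in the mindegree pair and attaching $P$. Case split on the direction of the unique edge of $P$ incident to $u$. If it is directed out of $u$, apply the induction hypothesis to the colouring induced on $X$: since $|X| \ge N/16 \ge c_1(\ell - 1)(n + m) \ge \R(T', S)$ and there is no blue $S$, we find a red copy of $T'$ inside $X$. The image $u' \in X$ then has $\ge N/16 \ge 16^2(c+1)(n+m) \ge \R(P, S)$ red out-neighbours in $Y$, all unused since $T'$ sits entirely in $X$; by Corollary \ref{cor:path-vs-tree} and the absence of blue $S$, a red copy of the oriented path $P - u$ appears in this out-neighbourhood, and attaching it to $u'$ through the red edge to the embedded first vertex of $P - u$ yields a red $T$. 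If instead the edge at $u$ is directed into $u$, mirror the argument: embed $T'$ inside $Y$, so that $u' \in Y$ has $\ge N/16$ red in-neighbours in $X$ by the symmetric half of the mindegree condition, and find $P - u$ there.

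The main obstacle is precisely that the pendant edge of $P$ at $u$ may point either way, whereas a mindegree pair only guarantees red \emph{out}-neighbours from $X$ to $Y$ and red \emph{in}-neighbours from $Y$ to $X$. Choosing the side of $(X, Y)$ on which to embed $T'$ according to the direction of this edge resolves the difficulty and lets the same mindegree pair serve both cases. The resulting recurrence gives $c_1(\ell) = 16^{\ell}(c + 1)$, a function of $\leaf(T)$ alone, as required.
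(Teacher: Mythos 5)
Your proof is correct, but it takes a different route from the paper's. The paper also inducts on the number of leaves with Corollary \ref{cor:path-vs-tree} as the base case, but in the inductive step it removes a whole \emph{branching vertex} $u$: assuming no blue $S$, it applies Lemma \ref{lem:high-deg-embed} twice to show that all but a bounded number of vertices have large red out-degree \emph{and} large red in-degree, picks one such vertex $v$ as the image of $u$, and then embeds every component of $T\setminus\{u\}$ (each with strictly fewer leaves) by induction inside the unused red out- or in-neighbourhood of $v$, according to the direction of the edge at $u$; this gives the recurrence $a_l = 11\,a_{l-1}$. You instead peel off a single pendant path up to the nearest branching vertex and work with a red $N/16$-mindegree pair obtained from Lemmas \ref{lem:bidirect-embed} and \ref{lem:split}, embedding the reduced tree on one side of the pair and the pendant path via Corollary \ref{cor:path-vs-tree} in the red out-/in-neighbourhood of $u'$ on the other side --- essentially transplanting the Case 1 argument of Lemma \ref{lem:reduced-one} from the tournament section into the directed setting; your choice of which side of $(X,Y)$ hosts $T'$ correctly resolves the orientation of the pendant edge, and the constant bookkeeping ($c_1(\ell)=16^{\ell}(c+1)$ versus the paper's $c\cdot 11^{\,l-2}$) checks out. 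The paper's version is a bit more economical --- one application of a degree-counting lemma handles all branches at $u$ simultaneously, with no density dichotomy or mindegree-pair machinery --- whereas your version reuses tools already developed for the oriented case and removes only one leaf per step; both give a constant exponential in $\leaf(T)$, which is all the lemma needs.
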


\begin{proof}
	    We say that a vertex of a tree is \textit{branching} if it has degree at least $3$ in the underlying graph. We will need the following two simple properties of branching vertices: a tree is an oriented path if and only if it has no branching vertices; and each subtree obtained by removing a branching vertex has strictly fewer leaves than the original tree.
	    
	Let $c$ be a constant such that $\R(P, T) \le c(|P| + |T|)$ for every oriented path $P$ and every oriented tree $T$; the existence of $c$ is guaranteed by Corollary \ref{cor:path-vs-tree}.    
	Write $a_l =  c \cdot 11^{l-2}$. We show, by induction on $l$, that $\R(S, T) \le a_l(n + m)$, for any oriented tree $T$ of order $m$ and any oriented tree $S$ of order $n$ with at most $l$ leaves. The induction base, where $l = 2$, is exactly the above statement, as then $S$ must be a path. Now suppose that $l \ge 3$ and let $S$ be a tree with $l$ leaves. Then $S$ has a branching vertex $u$. Write $d = a_{l-1}(n + m)$, and consider a $2$-colouring of $\dircomp{11d}$. By Lemma \ref{lem:high-deg-embed}, if there is no blue copy of $T$, then there are at most $4d + 2m \le 5d$ vertices whose red out-degree is at most $2d$. Similarly, there are at most $5d$ vertices with red in-degree at most $2d$. Hence, there is a vertex $v$, both of whose red in- and out-degrees are at least $2d$; we assign $u$ to $v$. We now embed, one by one, the trees in the forest $S \setminus \{u\}$. Let $S'$ be such a tree, and suppose that the edge between $u$ and $S'$ in $S$ is directed away from $u$. We seek to embed $S'$ in red within the set of out-neighbours of $v$ not already used. As there are $2d$ such neighbours, and at most $n \le d$ were used, we have $d = a_{l-1}(n+m)$ candidates for the embedding. By induction, either the desired red copy of $S$ exists, or we find a blue copy of $T$.
\end{proof}

We now prove Theorem \ref{thm:complete-trees-two} when both $T_1$ and $T_2$ are out or in-directed.

\begin{lem}\label{lem:outintree-vs-outintree}
    Let $T$ and $S$ be out or in-directed trees. Then $\R(T,S) \le c_2(|T|+|S|)$, for some constant $c_2$.
\end{lem}

\begin{proof}
    Let $n=|T|,m=|S|$, we proceed by induction on $n+m$. The base cases, when $n=1$ or $m=1$ are trivial. Without loss of generality, $n \ge m$ and $T$ is out-directed. Consider a $2$-colouring of $\dircomp{N}$, where $N = c_2(n+m)$.
    
    Let $T'$ be the $16$-core of $T$; then $T'$ has at most $16$ leaves.

    We call a vertex \emph{red} if it has red out-degree at least $c_2\cdot(n/16+m)+n$. Write $c_1 = c_1(16)$ (see \Cref{lem:bounded-leaf-vs-any}). If there are at least $c_1(n+m)$ red vertices, then by \Cref{lem:bounded-leaf-vs-any} we can find a blue copy of $S$ or a red copy of $T'$ among the red vertices. In the former case, we are done, so we assume the latter. In this case, we find copies of the missing subtrees, one by one, in the red out-neighbourhoods of the corresponding vertices of $T'$, avoiding used vertices. By definition of red vertices, and as at most $n$ vertices are used at any given moment, we always have at least $c_2(n/16 + m)$ candidates for the embedding, hence we are always able to find a red copy of the desired subtree (or we find a blue $S$, and are done). We thus find a red copy of $T$ in this case. From now on, we assume that there are more than $c_1(n+m)$ red vertices.
        
    We remove all red vertices and find that in the remaining graph every vertex has blue out-degree at least the following. 
    \begin{align*}
        N - 1 - c_2(n/16+m) - n - c_1 (n+m) + 1
        \ge \,\, & c_2(n+m) - c_2(n/16+m) - (n+m) - c_1 (n+m)  \\
        = \,\, & \frac{15}{16}c_2 n - (n+m) - c_1 (n+m) \\
        \ge \,\, & \frac{15}{32}c_2 (n+m) - (n+m) - c_1 (n+m) \\
        \ge \,\, & \frac{7}{16}c_2 (n+m) = \frac{7}{16}N,
    \end{align*} 
    where the first inequality follows from the assumption that $n \ge m$, and the second inequality follows by taking $c_2 \ge 32(c_1+1)$. 

    
    
   	Assuming that there is no blue $S$, Lemma \ref{lem:dfs} (DFS), applied to the graph of blue edges induced by the non-red vertices, gives a partition $\{U,X,Y\}$ of the remaining vertices with the following properties: $|U| < m$; $|X| = |Y|$; and every vertex in $X$ has at most $m$ blue out-neighbours in $Y$. Hence, the minimum blue out-degree in $X$ is at least $7N/16 - m-|U| > 7N/16 - 2m$, so the maximum red out-degree in $X$ is smaller than $|X|+2m-7N/16$. Lemma \ref{lem:high-deg-embed} (with $k=|X|+2m-7N/16,l=|X|/2-k$) now implies that there is any blue tree on at most $|X|/2-(|X|+2m-7N/16)=7N/16-|X|/2-2m\ge m$ vertices, where we used $|X| \le N/2$ and $N \ge 16m$. So, there is a blue copy of $S$ in $X$.
\end{proof}
This essentially completes the proof of Theorem \ref{thm:complete-trees-two}.
\begin{proof}[ of Theorem \ref{thm:complete-trees-two}]
	Theorem \ref{thm:complete-trees-two} follows by combining \Cref{lem:outintree-vs-outintree} and Lemma \ref{lem:reorientation}.
\end{proof}

\subsection{More colours}\label{sec-more-col}
	Our next aim is to prove Lemma \ref{lem:bounded-deg-tree-vs-indep-set} below, which we shall need in order to prove Theorem \ref{thm:complete-trees-m} for more than two colours. To this end, we focus on out-directed trees. We say that an out-directed tree with root $r$ is \textit{level-regular} if all vertices at distance $i$ from $r$ have the same out-degree, for every $i \ge 1$. Recall that an out-leaf in an out-directed tree is a leaf whose out-degree is $0$. We will need the following preliminary result.
    
    \begin{figure}[h]
        \caption{An example of a level-regular tree.}
        \includegraphics{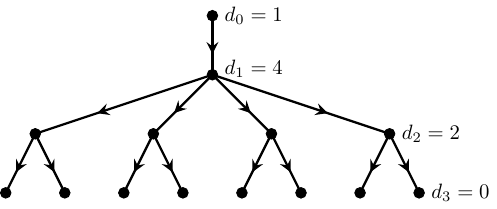}
        \label{fig:sym-tree}
    \end{figure}
    
\begin{lem}\label{lem:sym-tree}
    Let $T$ be an out-directed tree with $l$ out-leaves. There is a level-regular tree $T'$ such that $T$ is a subgraph of $T'$; $T'$ has at most $l^l$ out-leaves; and $|T'| \le l^l |T|$. 
\end{lem}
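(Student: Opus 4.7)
The plan is to let $T'$ be the smallest symmetric tree that contains $T$, obtained by promoting the out-degree at each level to the maximum that occurs there. Write $r$ for the root and $h$ for the depth of $T$ (the maximal distance from $r$ to a vertex). For each $0 \le i < h$, let $d_i$ be the largest out-degree in $T$ of any vertex at distance $i$ from $r$, and define $T'$ to be the rooted out-tree of depth $h$ in which every vertex at distance $i < h$ from the root has exactly $d_i$ children. By construction $T'$ is symmetric.

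To see that $T$ embeds as a subgraph of $T'$, I would proceed by induction on depth: map $r$ to the root of $T'$, inject the children of $r$ in $T$ (at most $d_0$ of them) into any distinct subset of the $d_0$ children of the root of $T'$, and recurse on each subtree. Each such subtree of $T$ sits at level $\ge 1$ and, by definition of $d_j$, its level-$j$ vertices have out-degree at most $d_j$, matching the corresponding symmetric subtree of $T'$.

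All that remains is to control the two parameters. The number of out-leaves of $T'$ is exactly $\prod_{i=0}^{h-1} d_i$ and
$$|T'| \;=\; \sum_{i=0}^{h} \prod_{j<i} d_j \;\le\; (h+1) \prod_{i=0}^{h-1} d_i.$$
Each $d_i$ is at most $l$, since a vertex of out-degree $d$ has $d$ disjoint subtrees under it, each containing an out-leaf. The crucial observation is that $d_i \ge 2$ only if some vertex of $T$ at level $i$ is a branching vertex (out-degree $\ge 2$), so the number of indices $i$ with $d_i \ge 2$ is at most the total number of branching vertices of $T$. A short double count bounds the latter by $l-1$: with $a$, $b$, $l$ counting vertices of $T$ with out-degree $\ge 2$, $= 1$, $= 0$, respectively, one has $a+b+l = |T|$ and $2a+b \le |T|-1$, whence $a \le l-1$. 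Therefore at most $l-1$ of the $d_i$'s exceed $1$, giving $\prod_{i=0}^{h-1} d_i \le l^{l-1} \le l^l$ (so the out-leaf bound holds) and, using $h+1 \le |T|$, also $|T'| \le |T| \cdot l^{l-1} \le l^l \cdot |T|$.

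The only non-trivial step is the bound on branching vertices; everything else is rearranging the product and using $h + 1 \le |T|$. I anticipate no real obstacle.
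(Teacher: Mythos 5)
Your proposal is correct and follows essentially the same route as the paper: the same symmetric tree $T'$ built from the level-wise maximum out-degrees $d_i$, the same greedy/inductive embedding of $T$ into $T'$, and the same counting $\prod_i d_i \le l^{l-1}$ and $|T'| \le (h+1)\prod_i d_i \le l^l|T|$. The only cosmetic difference is that you bound the number of levels with $d_i \ge 2$ by a direct double count of branching vertices, whereas the paper obtains the same inequality by citing its Lemma \ref{lem:deg-leaf}.
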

\begin{proof}        
    Let $d_i$ denote the maximum out-degree among vertices of $T$ at distance $i\ge 0$ from $r$, and let $k$ be the maximum distance of a vertex of $T$ from the root.
    We construct $T'$ as follows. We start by taking $T'$ to be a singleton and $i $ to be $0$, and at step $i \ge 1$ we add exactly $d_{i-1}$ children to each vertex which was added in the previous step. Note that $T'$ contains $T$ as a subtree; indeed, we can greedily embed $T$ in $T'$, starting by embedding the root of $T$ in the root of $T'$.
    By construction, $T'$ has exactly $d_0 \cdots d_{i-1}$ vertices at distance $i$ from the root. In particular, the number of out-leaves of $T'$ is $d_0 \cdots d_{k-1}$ and $|T'| \le d_0 \cdots d_{k-1} |T|$.
    Note that by Lemma \ref{lem:deg-leaf} we have $1+(d_0-1)+\ldots+(d_{k-1}-1)\le l$, so $d_i \le l$ for every $i$, and at most $l$ of the $d_i$'s are greater than $1$. In particular, $|T'| \le d_0 \cdots d_{k-1} |T|\le l^l|T|$ and $T'$ has at most $d_0 \cdots d_{k-1}  \le l^l $ leaves.        
\end{proof}

The following result, interesting in its own right, is the main tool which enables us to prove the result for more colours.
\begin{lem}\label{lem:bounded-deg-tree-vs-indep-set}
Let $T$ be an out-directed tree on $n$ vertices, with $l$ out-leaves. There is a constant $c_l$ such that any directed graph $G$ on $c_lnm$ vertices either contains $T$ as a subgraph or has an independent set of size $m$.
\end{lem}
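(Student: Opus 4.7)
My plan begins by invoking \Cref{lem:sym-tree} to replace $T$ by a symmetric out-directed tree $T'$ that contains $T$ as a subgraph and satisfies $|T'|\le l^l|T|$ with at most $l^l$ out-leaves. Since both bounds depend only on $l$, it suffices to establish the conclusion for symmetric trees; in the remainder of the plan I abuse notation and use $l$, $n$ for the number of out-leaves and the order of $T'$.

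For a symmetric $T$ of depth $k$ with degrees $(d_0,\ldots,d_{k-1})$, my main idea is to peel off the bottom level of leaves and induct on depth. Let $T^\circ$ denote $T$ with all its level-$k$ leaves removed, and set $H=\{v\in V(G):d^+(v)\ge 2|T|\}$. If the complementary set $L=V(G)\setminus H$ has size at least $(4|T|+1)m$, then $G[L]$ has underlying average degree below $4|T|$ and a standard greedy argument produces an independent set of size $m$. Otherwise $|H|\ge N/2$, and by the inductive hypothesis either $H$ contains an independent set of size $m$ and we are done, or a copy of $T^\circ$; in the latter case, the out-degree condition on $H$ guarantees that each vertex of $T^\circ$ whose children in $T$ are leaves has at least $d_{k-1}$ unused out-neighbors in $V(G)$, which completes the embedding of $T$.

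The main obstacle is that the depth $k$ of a symmetric tree is not bounded by the number of out-leaves: when many $d_i$ equal $1$, $k$ can be much larger than $l$, causing the naive depth induction to yield a constant growing as $2^k$. To circumvent this I would decompose $T$ into a \emph{branching part} $T_{\text{brch}}$, consisting of levels $0,\ldots,k^{*}+1$ where $k^{*}$ is the largest index with $d_{k^{*}}\ge 2$, together with directed tail paths of length $p=k-k^{*}-1$ attached to each of its $l$ leaves. By \Cref{lem:deg-leaf} the number of indices with $d_i\ge 2$ is at most $l-1$, so $T_{\text{brch}}$ has depth and size bounded purely in terms of $l$, and the depth-induction above applied to $T_{\text{brch}}$ yields a constant of the form $2^{O(l)}$. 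The tail paths are handled by the Gallai-Hasse-Roy-Vitaver theorem (\Cref{thm:ghrv}): colouring each vertex by the length of the longest directed path leaving it shows that the set of vertices from which no length-$p$ directed path emanates has chromatic number at most $p$, so its size is at most $p(m-1)$, a negligible fraction of $|V(G)|$.

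The remaining technical point is to arrange the embedding so that the $l$ tail paths, each required to start at a prescribed leaf of the $T_{\text{brch}}$-embedding, can be found vertex-disjointly and disjoint from the branching part. I would address this by restricting the leaves of $T_{\text{brch}}$ to vertices that are \emph{path-extensible} (i.e.\ start a directed path of length $p$) in the appropriate residual subgraph, which is possible because path-extensibility fails on only $O(pm)$ vertices while every candidate parent in $H$ has out-degree at least $2|T|$ and hence many admissible children. Extending one tail at a time and updating the residual graph before each new extension, one final invocation of \Cref{thm:ghrv} produces the required directed path of length $p$ at each step, so the overall constant $c_l$ depends on $l$ alone.
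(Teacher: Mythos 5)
Your reduction to symmetric trees via \Cref{lem:sym-tree} matches the paper's first step, and the peel-one-level argument is locally sound (the degeneracy bound on $L$ and the completion of the embedding from $T^{\circ}$ using out-degree $\ge 2|T|$ both work). The gap is the pivotal claim that $T_{\mathrm{brch}}$, the tree of levels $0,\ldots,k^{*}+1$ with $k^{*}$ the largest index satisfying $d_{k^{*}}\ge 2$, has depth and size bounded purely in terms of $l$. \Cref{lem:deg-leaf} bounds the \emph{number} of indices with $d_i\ge 2$ by $l-1$, but says nothing about their \emph{positions}: the symmetric tree with $d_0=\cdots=d_{n-3}=1$ and $d_{n-2}=2$ has only $l=2$ out-leaves, yet $k^{*}=n-2$, so $T_{\mathrm{brch}}=T$ and your decomposition strips nothing. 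In general, arbitrarily long unary stretches may sit \emph{between} branching levels, and your construction only removes the unary tail below the last branching level. Hence the depth induction you run on $T_{\mathrm{brch}}$ produces a constant depending on the depth --- exponential in it as you set it up (requiring $|H|\ge N/2$ forces roughly a doubling per level), and still linear in the depth even with the sharper accounting $|H|\ge N-(4|T|+1)m$, which gives $c_{k}\le c_{k-1}+O(1)$ --- so the constant depends on $n$, not on $l$ alone. Since the whole content of the lemma is an independence bound $\Omega(N/n)$ with a constant independent of $n$ (as opposed to the $\Omega(N/n^{2})$ one gets from Burr-type arguments), this is a genuine gap rather than a technicality.

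To repair it one must handle \emph{all} unary stretches, not just the final one, and the natural way is to change the induction parameter from the depth to the number of out-leaves, which is what the paper does: take the topmost vertex $u$ of out-degree $d\ge 2$, delete all but one of the subtrees below $u$ to obtain a symmetric tree $T'$ with fewer out-leaves, take a maximal family of vertex-disjoint copies of $T'$ (at least $2lm$ of them if there is no independent set of size $m$), and inside the set $U$ of vertices playing the role of the child of $u$ find a vertex with out-degree at least $l\ge d$ in $U$ --- otherwise $G[U]$ is $(2l-2)$-degenerate and contains an independent set of size $m$ --- then stitch subtrees from $d$ distinct copies onto that vertex to rebuild $T$; the root-to-$u$ path comes for free from one of the copies, which is how path stretches of arbitrary length are absorbed without any depth dependence. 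Separately, even if the depth issue were resolved, your last step needs care: each tail must start at a \emph{prescribed}, already-embedded leaf and survive the deletion of vertices consumed by earlier tails, and ``path-extensible in the appropriate residual subgraph'' is not stable under those later deletions, so that step would require an additional argument as well.
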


\begin{proof}
     We first prove the special case of the lemma where $T$ is additionally assumed to be level-regular. We prove this special case by induction on $l$. The base case $l=1$, so $T$ is a directed path, follows from Theorem \ref{thm:ghrv} with $c_1 = 1$. For the step, we assume that there is a constant $c_{l-1}$ which satisfies the setting of the problem, for trees with $l-1$ out-leaves. We assume that there is no independent set in $G$ of size $m$ and will show that there is a copy of $T$ in $G$.
        
    Let $r$ be the root of $T$, and let $u$ be a vertex of out-degree at least $2$, whose distance from $r$, which we denote by $t$, is minimal among vertices of out-degree at least $2$ in $T$. Let $d$ be the out-degree of $u$ (so $d \ge 2$), and note that $T$ consists of a directed path from $r$ to $u$ (of length $t$; note that $t$ could be $0$), and a level-regular out-directed tree rooted at $u$. Let $v_1, \ldots, v_d$ be the children of $u$ in $T$. Denote by $T'$ the subtree of $T$, obtained by removing the subtrees rooted at $v_i$ for $i \ge 2$ (see Figure \ref{fig:bounded-leaves}). Note that $T'$ is a level-regular out-directed tree, which has fewer out-leaves than $T$ (as the out-leaves which are descendants of $v_i$, where $i \ge 2$, are removed, while no new out-leaves are introduced). Hence, by induction on $l$ and by the assumption that there is no independent set of size $m$, there is a copy of $T'$ in every induced subgraph of $G$ of order $c_{l-1} nm$.
    \begin{figure}[h]
        \caption{An illustration of the above notation.}
        \includegraphics{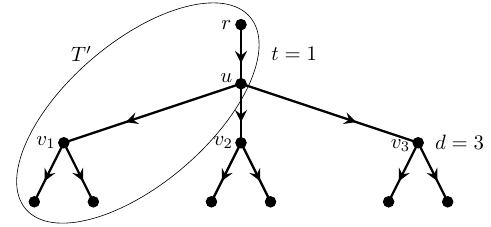}
        \label{fig:bounded-leaves}
	\end{figure}
    
    Let $T_1, \ldots, T_k$ be a maximal collection of vertex-disjoint copies of $T'$ in $G$. Then the vertices of these copies of $T'$ cover all but at most $c_{l-1} nm$ vertices of $G$, hence $c_l nm = |G| \le kn + c_{l-1}nm$. It follows that $k \ge (c_l - c_{l-1})m \ge 2l\cdot m$, where the last inequality follows by letting $c_l \ge c_{l-1} + 2l$. Denote by $u_i$ the vertex in $T_i$ that corresponds to $v_1$ in $T$, and let $U = \{u_1, \ldots, u_k\}$. We claim that there is a vertex in $U$ with out-degree at least $l$ in $U$. Indeed, suppose otherwise. Then, in every subset $W$ of $U$, the number of edges is at most $(l-1)|W|$, hence there is a vertex of in-degree at most $l-1$, so its degree in the underlying graph of $G[W]$ is at most $2l-2$. It follows that the underlying graph of $G[U]$ is $(2l-2)$-degenerate; in particular, it has chromatic number at most $2l - 1$. Therefore, there is an independent set of size at least $|U| / (2l-1) \ge m$ in $U$, a contradiction. Hence, there is a vertex $w_0 \in U$ whose out-degree in $U$ is at least $l$. 
    
    Note that $l \ge d$ by Lemma \ref{lem:deg-leaf}. We may thus pick $d$ out-neighbours of $w_0$ in $U$, denoted by $w_1, \ldots, w_d$. Let $i_j$ be such that $w_j \in T_{i_j}$, where $0 \le j \le d$, recall that $T_{j}$ are all disjoint. We embed $u$ at $w_0$ and combine it with subtrees of $T_{i_j}$ rooted at $w_j$ with the path from the child of the root to $w_0$ in $T_{i_0}$, using the edges from $w_0$ to $w_1, \ldots, w_d$. This forms a copy of $T$ (see Figure \ref{fig:bounded-leaves-2}), as required.
    \begin{figure}[h]
        \caption{An illustration of the embedding described above, for the tree $T$ which appears in Figure \ref{fig:bounded-leaves}.}
        \includegraphics{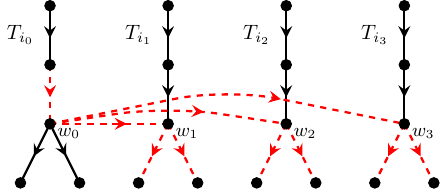}
        \label{fig:bounded-leaves-2}
	\end{figure}
    
    This completes the proof of Lemma \ref{lem:bounded-deg-tree-vs-indep-set} for level-regular trees. The general case, where the tree is not assumed to be level-regular, follows from the level-regular case by Lemma \ref{lem:sym-tree}.
\end{proof}
We are now ready to give the proof of our main result of this section, Theorem \ref{thm:complete-trees-m}.
\begin{proof}[ of Theorem \ref{thm:complete-trees-m}]
	We prove the theorem by induction on triples $(k, l, \sum_{i = 1}^k |T_i|)$, ordered lexicographically, where $l$ denotes the number of trees among $T_1, \ldots, T_k$ that are not in- or out-directed. Specifically, we show that there exist constants $c_{k,l}$ such that 
	\begin{equation} \label{eqn:comp-many-cols}
		\R(T_1, \ldots, T_k) \le c_{k, l} |T_1| \cdots |T_k| \left(\sum_{i = 1}^k \frac{1}{|T_i|}\right).
	\end{equation}
	For the base of the induction, note that the case $k = 2$ follows directly from Theorem \ref{thm:complete-trees-two}.	
	Below, we will show how to prove (\ref{eqn:comp-many-cols}) when for every so-called frequent colour $i$ (defined below), the tree $T_i$ is in- or out-directed. This includes the case where all trees are in- or out-directed, i.e.\ when $l = 0$. 
	Finally, if one of the trees $T_i$ is a single vertex, the assertion clearly follows. 
	
	Let $n = c_{k, l} |T_1| \cdots |T_k| \sum_{i = 1}^k \frac{1}{|T_i|}$ and let $G$ be a $k$-colouring of $\dircomp{n}$; denote its vertex set by $V$, and let $G_i$ be the graph of its edges in colour $i.$ We assume, for the sake of contradiction, that $G_i$ does not contain a copy of $T_i$ for every $i \in [k]$.
	
	We say that a colour $i$ is \emph{$\alpha$-frequent} in a set $U$, if at least $\alpha$ proportion of the edges in $U$ have colour $i$.
	Denote $\alpha = \frac{1}{3\cdot (48k)^2}$.	
	We start by showing that we may assume that for every $\alpha$-frequent colour $i$ in $V$, the tree $T_i$ is in- or out-directed.
	Indeed, suppose that there is an $\alpha$-frequent colour $i$ such that $T_i$ is not in- or out-directed. Without loss of generality, $i = 1$.
	Denote $S_1 = S_1(T_1, \ldots, T_k)$. Then, by Lemma \ref{lem:reorient-one}, 
	\begin{align*}
		\R(T_1, \ldots, T_k) 
		& \le \frac{8}{\alpha} \left(\R(S_1, T_2, \ldots, T_k) + |T_1|\right) \\
		& \le \frac{8}{\alpha} \left(c_{k, l-1} \cdot|S_1| \cdot |T_2| \cdots |T_k| \cdot \left( \frac{1}{|S_1|} + \sum_{i = 2}^k \frac{1}{|T_i|}\right) + |T_1|\right) \\
		& \le \frac{8 (c_{k, l-1} + 1)}{\alpha}  \cdot|T_1| \cdots |T_k| \cdot \sum_{i = 1}^k \frac{1}{|T_i|}\\
		& < c_{k, l} \cdot |T_1| \cdots |T_k| \cdot \sum_{i = 1}^k \frac{1}{|T_i|} = n,
	\end{align*}
	where the last inequality follows by taking $c_{k,l} > \frac{8c_{k, l-1} + 1}{\alpha}$. This is a contradiction; therefore, from now on we may assume that for every frequent colour $i$ in $V$, the tree $T_i$ is in- or out-directed.
	In particular, as explained above, this covers the case where $l = 0$, i.e.\ all the trees $T_i$ are in- or out-directed.
	
	\begin{claim}\label{claim:mindeg-core}
		Let $i$ be a $\frac{1}{3k}$-frequent colour in a set $U$ of size at least $\frac{n}{48k}$. Then there exists a subset $W$ of $U$ of size at least $\frac{1}{48k}|U|$, within which there is no copy of $T_i'$ coloured $i$, where $T_i'$ is some subtree of $T_i$ of order at most $\beta|T_i|,$ for $\beta = \frac{1}{(50k)^2}$.
    \end{claim}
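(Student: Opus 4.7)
Without loss of generality $T_i$ is out-directed; the in-directed case is symmetric, and the reduction to one of these cases is legitimate because we already arranged that every $\alpha$-frequent colour (and hence in particular every $\frac{1}{3k}$-frequent colour) has an in- or out-directed tree $T_i$. The main step is to apply the DFS lemma (Lemma~\ref{lem:dfs}) to the colour-$i$ subgraph $G_i[U]$ with respect to $T_i$. Since by the global contradiction hypothesis $T_i \not\subseteq G_i$, this yields a partition $\{U_0, X, Y\}$ of $U$ with $|U_0| \le |T_i|$, $|X| = |Y| \ge (|U| - |T_i|)/2$, and every vertex $x \in X$ having fewer than $|T_i|$ colour-$i$ out-neighbours in $Y$. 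The size constraint on $|T_i|$ relative to $|U| \ge n/(48k)$ (which will be enforced by outer steps of the induction, since $|T_i|$ divides the product $|T_1|\cdots|T_k|$ appearing in $n$) guarantees $|X|, |Y| \ge |U|/4$.

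Next, I would choose $T_i'$ as a subtree of $T_i$ of order at most $\beta |T_i| = |T_i|/(50k)^2$, the exact choice being guided by the shape of $T_i$, and take $W$ to be either $Y$ itself or a further distilled subset of $Y$. If $T_i$ contains a directed subpath of length at least $\lceil \beta |T_i| \rceil - 1$, I take $T_i'$ to be such a directed subpath; then ruling out a colour-$i$ copy of $T_i'$ in $W$ reduces to avoiding a long directed colour-$i$ path, which via Theorem~\ref{thm:ghrv} is accomplished by passing to a largest vertex-colour class of a proper colouring of the underlying graph of $G_i[Y]$. If instead $T_i$ has no such long directed subpath (so $T_i$ is shallow and consequently has many out-leaves), I take $T_i'$ to be a ``fat top'' subtree rooted at the root of $T_i$---for instance a BFS ball whose frontier vertices have large remaining subtrees in $T_i$---and derive the absence of $T_i'$ in $W$ from the bounded colour-$i$ out-degrees imposed by the DFS combined with a degree-counting argument in the style of Lemma~\ref{lem:high-deg-embed}.

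The main obstacle, in both cases, is the contrapositive step: one must argue that a hypothetical colour-$i$ copy of $T_i'$ inside $W$, together with the structural information supplied by the DFS partition, already allows one to extend the embedding to a full copy of $T_i$ in $G_i[U]$---contradicting the global hypothesis. Since Lemma~\ref{lem:dfs} only restricts out-edges from $X$ into $Y$, the extension naturally uses colour-$i$ out-edges from $Y$ into $X \cup U_0$, whose density is not directly controlled; I expect to need an auxiliary application of Lemma~\ref{lem:split} inside $Y$ to secure a colour-$i$ mindegree pair that drives a left--right embedding in the spirit of Lemma~\ref{lem:path-vs-indep}, or to iterate the DFS on $Y$ in order to whittle the structure down further. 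The specific constants $\beta = 1/(50k)^2$ and $1/(48k)$ are tuned so that both the size estimate $|W| \ge |U|/(48k)$ and the subtree-order estimate $|T_i'| \le \beta|T_i|$ leave enough slack for either of these strategies to go through.
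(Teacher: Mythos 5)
There is a genuine gap: your proposal is a plan whose central step you yourself leave open (``the main obstacle, in both cases, is the contrapositive step\ldots I expect to need\ldots''), and the concrete sub-strategies do not deliver the claim as stated. The whole difficulty of Claim \ref{claim:mindeg-core} is to convert the \emph{global} absence of an $i$-coloured copy of the full tree $T_i$ into the absence of some \emph{small} subtree $T_i'$ on a \emph{large} vertex set, and nothing in your sketch performs this conversion. In your Case A, taking $W$ to be a largest colour class of a proper colouring of the underlying graph of $G_i[Y]$ only gives $|W|\ge |Y|/\chi(G_i[Y])$, and you have no bound on $\chi(G_i[Y])$: Theorem \ref{thm:ghrv} runs in the wrong direction here, because a long $i$-coloured directed path in $Y$ is not forbidden when $T_i$ is a general tree (it is merely a copy of your candidate $T_i'$, which shows $Y$ itself cannot serve as $W$, not a contradiction). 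For instance, colour $i$ could induce on $Y$ a disjoint union of transitive tournaments of order $|T_i|-1$; this is consistent with having no $i$-coloured $T_i$, yet $\chi(G_i[Y])\approx|T_i|$ and every colour class has size only about $|Y|/|T_i|$, far below $\frac{|U|}{48k}$ when $|T_i|\gg k$. In Case B the plan is vaguer still: Lemma \ref{lem:high-deg-embed} has no direct analogue because the complement of colour $i$ is a union of $k-1$ colours, not a single colour, and no explicit $W$ and $T_i'$ are produced. Finally, as you note, the DFS partition only controls colour-$i$ out-edges from $X$ to $Y$, which is not the information needed to extend a hypothetical copy of $T_i'$ to a copy of $T_i$.

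For comparison, the paper's proof uses a different mechanism, and it is exactly the piece you are missing. It takes a $\frac{|U|}{24k}$-mindegree pair $(X,Y)$ in colour $i$ via Lemma \ref{lem:split} (not the DFS lemma), lets $S$ be the $\beta^{-1}$-core of $T_i$ (Definition \ref{def:core}), which has at most $(50k)^2$ out-leaves and whose hanging subtrees each have order at most $\beta|T_i|$, and then \emph{forces} an $i$-coloured copy of $S$ inside $X$: if none existed, Lemma \ref{lem:bounded-deg-tree-vs-indep-set} would give a large set with no colour-$i$ edges, and the induction on the number of colours would produce a $j$-coloured $T_j$ for some $j\neq i$, contradicting the global hypothesis. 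With this partial embedding in hand, the hanging subtrees of $T_i\setminus S$ are embedded greedily in colour $i$ inside the $Y$-out-neighbourhoods of the corresponding core vertices (each of size at least $\frac{|U|}{24k}$ by the mindegree property); since completing all of them would yield an $i$-coloured $T_i$, some hanging subtree $S_j$ fails to embed, and that failure simultaneously certifies both requirements of the claim: $T_i'=S_j$ has order at most $\beta|T_i|$, and $W$ is the relevant out-neighbourhood minus the at most $|T_i|$ used vertices, of size at least $\frac{|U|}{24k}-|T_i|\ge\frac{|U|}{48k}$. Without some analogue of this ``build the core, then let the first failed extension hand you $W$ and $T_i'$'' step (which in turn needs Lemma \ref{lem:bounded-deg-tree-vs-indep-set} and the $(k-1)$-colour induction, neither of which your sketch invokes), the proposal does not prove the claim.
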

   
    \begin{proof}
    		Note that $i$ is $\alpha$-frequent in $V$, hence, $T_i$ is in- or out-directed.
		We assume, without loss of generality, that $T_i$ is out-directed and $i=1$. 
		By Lemma \ref{lem:split}, there is a $\frac{|U|}{24k}$-mindegree pair $(X, Y)$ in $U$.
		Let $S$ be the $\beta^{-1}$-core of $T_1$. If there is no copy of $S$ in $G_1[X]$, then, by Lemma \ref{lem:bounded-deg-tree-vs-indep-set}, as $S$ has at most $\beta^{-1} = (50k)^2$ out-leaves, there is an independent set in $G_1[X]$ of size at least the following, where $c = c_{(50k)^2}$ is the constant given by Lemma \ref{lem:bounded-deg-tree-vs-indep-set}.
		\begin{align*}
			\frac{|X|}{c |S|}
			\ge \frac{|U|}{24k \cdot c |S|} 
			\ge \frac{n}{48k \cdot 24 k \cdot c |T_1|}
			& \ge \frac{c_{k, l}}{2(24k)^2 \cdot c} \cdot |T_2| \cdots |T_k|\cdot\sum_{i=2}^k \frac{1}{|T_i|} \\
			& \ge c_{k-1, l} \cdot |T_2| \cdots |T_k|\cdot\sum_{i=2}^k \frac{1}{|T_i|}.
		\end{align*} 
		For the last inequality, we assume $c_{k, l} \ge 2(24k)^2 \cdot c \cdot c_{k-1, l}$. By induction on $k$, we find that there is a copy of $T_i$ in colour $i$, for some $i \ge 2$, a contradiction. Hence, there is a copy of $S$ in $G_1[X]$.
		
		Let $S_1, \ldots, S_r$ be the trees comprising of the forest $T_1 \setminus S$, and denote by $u_i$ the vertex in $X$ that corresponds to the vertex in $S$ that sends an edge towards $S_i$, for $i \in [r]$. We attempt to embed the trees $S_i$, one by one, in the out-neighbourhood of $u_i$ in $G_1[Y]$, such that the embedded copies of $S, S_1, \ldots, S_r$ are vertex-disjoint. If we are able to embed all the trees $S_i$, we obtain a copy of $T_1$ in colour $1$, a contradiction. Hence, for some $i \le r$, we fail. Note that the set $W$ of vertices in the out-neighbourhood of $u_i$ in $G_1[Y]$ which were not already used, has size at least $\frac{|U|}{24k} - |T_1| \ge \frac{U}{48k}$. By choice of $i$, there is no copy of $S_i$ in $G_1[W]$, and $S_i$ is a subtree of $T_1$ of order at most $\beta |T_1|$. Hence, $W$ satisfies the requirements of the claim.
    \end{proof}

	Without loss of generality, suppose that $1$ is a majority colour, so, it is $\frac{1}{k}$-frequent in $V$; in particular, it is $\frac{1}{3k}$-frequent. By Claim \ref{claim:mindeg-core}, there is a set $A$ of size at least $\frac{n}{48k}$, and a subtree $T_1'$ of $T_1$ of order at most $\beta |T_1|$, such that there is no copy of $T_1'$ in colour $1$ in $A$. 
	Note that, $1$ is not $2/3$-frequent in $A$. Indeed, this would, by Lemma \ref{lem:bidirect-embed}, imply that we can find any tree of size at most $|A|/12 \ge |T_1'|$ in colour $1$ in $A$, hence there is a copy of $T_1'$ of colour $1$ in $A$, a contradiction.
    
	It follows that there exists a colour $i \neq 1$, such that $i$ is $\frac{1}{3k}$-frequent in $A$; without loss of generality, $i = 2$.
	By Claim \ref{claim:mindeg-core}, there is a subset $B$ of $A$, of size at least $|A|/48k \ge n/(48k)^2$, in which there is no copy of $T_2'$ in colour $2$, where $T_2'$ is a subtree of $T_2$ of order at most $\beta |T_2|$. By induction on $\sum_{i=1}^k |T_i|$, the following holds.
    \begin{align*}
        \frac{n}{(48 k)^2} 
        & \le \,  \R\left(T_1', T_2', T_3, \ldots, T_k \right)  \\
        & \le \,  c_{k,l} \cdot \Big( \,|T_2'| \! \cdot \! |T_3| \! \cdots \! |T_k| \, + \,  |T_1'| \! \cdot \! |T_3| \! \cdots \! |T_k| \, + \, |T_1'| \! \cdot \! |T_2'| \! \cdot \! |T_3| \! \cdots \! |T_k| \! \cdot \! \sum_{i = 3}^k \frac{1}{|T_i|} \, \Big) \\
        & < \, \beta \cdot c_{k,l} \cdot \Big( \, |T_1| \cdots |T_k| \cdot \sum_{i=1}^k \frac{1}{|T_i|} \, \Big) \\
        & =  \frac{n}{(50k)^2}.
    \end{align*} 
    This is a contradiction, completing the proof of Theorem \ref{thm:complete-trees-m}. 
\end{proof}

\subsection{Lower bound}
We conclude this section with the following lower bound, which shows that Theorem \ref{thm:complete-trees-m} is tight for directed paths, up to a constant factor.
\begin{prop}\label{prop:low-paths}
	Let $k \ge 2$, then	$\R(\dirpath{2n+1}, k+1) \ge 4 n^{k}.$
\end{prop}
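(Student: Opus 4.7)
The plan is to prove this lower bound by induction on $k \ge 3$.

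For the base case $k = 3$, I would exhibit an explicit $3$-colouring of $\dircomp{n^2}$ with no monochromatic $\dirpath{n+1}$: take $V = [n] \times [n]$ and colour the directed edge $(a, b) \to (a', b')$ by colour $1$ if $a < a'$, colour $2$ if $a > a'$, and colour $3$ if $a = a'$. Colours $1$ and $2$ then form transitive tournaments in the first coordinate, so any monochromatic directed path in them is strictly monotone in the first coordinate and has at most $n$ vertices. Colour $3$ consists of $n$ disjoint copies of $\dircomp{n}$ (indexed by the first coordinate), each of which has longest directed path of exactly $n$ vertices. Hence there is no monochromatic $\dirpath{n+1}$, giving $\R(\dirpath{n+1}, 3) \ge n^2 + 1$.

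For the inductive step $k \ge 4$, I would establish the recursion
\[
\R(\dirpath{n+1}, k) \ge \lceil n/2 \rceil \cdot (\R(\dirpath{n+1}, k-1) - 1) + 1.
\]
Iterating this from the base case yields $\R(\dirpath{n+1}, k) \ge \lceil n/2 \rceil^{k-3} \cdot n^2 + 1 \ge 2^{3-k} n^{k-1}$. The recursion would be established by a blow-up construction: starting from a $(k-1)$-colouring $\mathcal{C}$ of $\dircomp{N}$ on a vertex set $W$ that avoids monochromatic $\dirpath{n+1}$ (with $N = \R(\dirpath{n+1}, k-1) - 1$), construct a $k$-colouring on $V = W \times [\lceil n/2 \rceil]$. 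The new colour $k$ would be used for the within-column edges $(u, i) \to (u, j)$, producing $|W|$ disjoint copies of $\dircomp{\lceil n/2 \rceil}$ and thus contributing only monochromatic paths of length at most $\lceil n/2 \rceil \le n$. The between-column edges $(u, i) \to (v, j)$ with $u \ne v$ would be coloured by a layer-aware rule based on $\mathcal{C}(u \to v)$ and the layer indices.

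The key difficulty is designing the layer-aware rule so that each existing colour's monochromatic paths remain bounded by $n$ vertices after the blow-up. A purely naive assignment that colours $(u, i) \to (v, j)$ as $\mathcal{C}(u \to v)$ regardless of layer indices does not work, because bidirected edges or directed cycles in some colour class $\mathcal{C}_c$ can be exploited by zig-zagging across layers to produce monochromatic $c$-paths of length far exceeding $n$. The intended remedy is to assign the new colour $k$ also to a carefully chosen subset of between-column edges (for instance, those with $i > j$), so that monochromatic paths in each existing colour $c < k$ have monotone layer indices along the path and hence project to essentially simple paths in $\mathcal{C}_c$ of length at most $n$. Verifying that the enlarged colour-$k$ subgraph still has all its monochromatic paths bounded by $n$---using the strictly decreasing layer indices along the additional between-column edges assigned to colour $k$ to control zig-zag behaviour---would be the remaining technical bookkeeping.
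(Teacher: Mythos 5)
Your base case ($k=3$) is fine, but the inductive step has a genuine gap, and it is exactly at the point you defer as ``technical bookkeeping''. The layer-aware rule you sketch (colour $k$ on edges $(u,i)\to(v,j)$ with $i>j$, colour $\mathcal{C}(u\to v)$ when $i\le j$) does not bound the old colours: non-decreasing layer indices do not force the projection to $W$ to be a simple path, because a path may re-use a vertex of $W$ by stepping up one layer each time it would repeat. Concretely, your own base-case colouring has colour $3$ equal to a disjoint union of copies of $\dircomp{n}$, so it contains bidirected edges $u\leftrightarrow v$ (and longer cycles); then $(u,1)\to(v,1)\to(u,2)\to(v,2)\to\cdots$ is a colour-$3$ directed path on $2\lceil n/2\rceil\ge n+1$ vertices, i.e.\ a monochromatic $\dirpath{n+1}$, and a $3$-cycle in a colour class gives paths on roughly $3\lceil n/2\rceil$ vertices. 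The enlarged colour-$k$ class fails as well: within-column edges form a complete digraph on each column, so they go up in layer too, and combining an up-step inside a column with a down-step across columns, e.g.\ $(u_1,1)\to(u_1,2)\to(u_2,1)\to(u_2,2)\to(u_3,1)\to\cdots$, yields a colour-$k$ path on about $2|W|\ge 2n^2$ vertices. So the recursion $\R(\dirpath{n+1},k)\ge\lceil n/2\rceil\bigl(\R(\dirpath{n+1},k-1)-1\bigr)+1$ is not established, and it cannot be established by any such blow-up of an \emph{arbitrary} extremal $(k-1)$-colouring: the induction hypothesis gives no control on the cycle structure of the colour classes, which is precisely what a product/blow-up argument needs.

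The paper avoids this by not inducting on an unknown extremal colouring: it gives one explicit colouring of $[n]^{k-1}$ (a lexicographic-style rule in which every colour class strictly decreases a coordinate statistic, hence every colour class is acyclic), so monochromatic paths have at most $2n$ vertices by monotonicity, and it applies a blow-up only at the very end, to its own construction, after explicitly noting that all colour classes there are acyclic and placing the new colour only \emph{inside} the blobs -- acyclicity is what guarantees that a monochromatic path projects to a simple path of the base colouring. If you want to keep your inductive framework, you would need to strengthen the induction hypothesis so that the colourings you blow up have acyclic colour classes (at least in the colours reused across columns), or simply switch to an explicit product construction as in the paper.
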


\begin{proof}
    We first construct a $k$-colouring of the edges of $\dircomp{m}$, for $m=n^{k-1}$, for which there is no monochromatic directed path of order $2n$.
    We represent vertices of $\dircomp{m}$ by $(k-1)$-tuples $(\alpha_1, \ldots, \alpha_{k-1})$ with $\alpha_i \in [n]$.
    We colour the edges as follows, where $a=(\alpha_1, \ldots, \alpha_{k-1})$ and $b = (\beta_1, \ldots, \beta_{k-1})$. It is convenient to define the colouring in steps, for $i$ from $1$ to $k$.
\begin{itemize}
    \item 
    		Step $1$: If $\alpha_1 > \beta_1$ then we colour $ab$ with colour $1$.
    \item 
    		Step $i$, where $1 < i < k$: if $ab$ has not been coloured in any step $j<i$, we colour $ab$ with colour $i$ if $\alpha_i > \beta_i$, or if $\alpha_i = \beta_i$ and $\alpha_{i-1} < \beta_{i-1}$.
    \item 
    		Step $k$: we colour all remaining edges with colour $k$. 
\end{itemize}
	
	Note that if $ab$ is coloured $k$, then $\alpha_{k-1} < \beta_{k-1}$. Indeed, clearly $\alpha_i \le \beta_i$ for every $i \in [k-1]$. Let $i$ be maximal such that $\alpha_i < \beta_i$ (note that such $i$ exists, as otherwise $a = b$). Then if $i \le k-2$, the edge $ab$ would have been coloured by step $i + 1 \le k - 1$, a contradiction.
	
	Let $P$ be a monochromatic directed path; denote the colour of its edges by $i$.
	If $i = 1$, then the first coordinate of the vertices of the path (strictly) decreases along the edges of the path, hence $|P| \le n$.
	If $1 < i < k$, then along the path the $i$-th coordinates decrease and the $(i - 1)$-th coordinates increase, and at least one strictly decreases or increases. Hence $|P| \le 2n-1$.
	Finally, if $i = k$, then the $k-1$-coordinates strictly increase along the path, hence $|P| \le n$.
	
	Let $G_1$ and $G_2$ be two disjoint copies of the above example.
	Colour all edges from $G_1$ to $G_2$ by colour $1$, and colour all edges from $G_2$ to $G_1$ by colour $k$. The resulting graph, which we denote by $G$, is a $k$-colouring of $\dircomp{m}$, where $m = 2n^{k-1}$, without monchromatic paths of order $2n+1$. 
	
	Finally, another factor of $2$ can be gained as follows. Note that each of the colours of $G$ spans an acyclic graph. Let $H$ be a complete directed graph of order $4n^k$ obtained by replacing each vertex $u$ of $G$ by a set $S_u$ of size $2n$. Colour edges inside $S_u$ by colour $k+1$, and for distinct $u, v$ colour the edges from $S_u$ to $S_v$ according to the colour of $uv$ in $G$. $H$ is a $(k+1)$-coloured complete graph of order $4n^k$ which contains no monochromatic path of order $2n+1$ in this colouring, as desired.
\end{proof}
    
\section{Concluding remarks and open problems}\label{sec-conc-rem}

In this paper we considered two very natural analogues of Ramsey theory for directed graphs. Specifically, in both oriented and directed cases we have found bounds on Ramsey numbers for oriented trees, which are tight up to a constant factor. 

Burr and Erd\H{o}s \cite{burr73} initiated the study of Ramsey numbers of bounded degree graphs in 1975. They conjectured that the Ramsey number of bounded degree graphs is linear in their size. This was subsequently proved by Chv\'atal, R\"odl, Szemer\'edi and Trotter \cite{chvatal83}. The dependence of the constant factor on the maximal degree in this bound was later improved, first by Eaton \cite{eaton98}, then by Graham, R\"odl and Ruci\'nski \cite{graham00} and the currently best bound is due to Conlon, Fox, Sudakov \cite{conlon12}. As a natural analogue of this problem, it would be interesting to determine the behaviour of directed or oriented Ramsey numbers of acyclic bounded degree digraphs; note that it is necessary here to require the graph to be acyclic, because both oriented and directed Ramsey numbers of a directed cycle are infinite. It is worth noting that even the case of one colour, in the oriented setting, is of interest; in particular, given $d$, is there a constant $c = c(d)$ such that every tournament of order at least $cn$ contains every acyclic  graph of order $n$ and maximum degree at most $d$? 

While our results are all tight up to a constant factor, it would be interesting to determine the exact values of the directed or oriented Ramsey numbers of certain graphs. For example, in the oriented case the $k$-colour Ramsey number of a path is known: $\RT(\dirpath{n},k)=(n-1)^k+1$. On the other hand, in the directed case $\R(\dirpath{n},k)$ is only known precisely for $k=2$. Already the case $k=3$ is open, Proposition \ref{prop:low-paths}, combined with the standard GHRV based argument, would give $n^2 \le \R(\dirpath{n},3) \le 2n^2$. 

Our results Theorem \ref{thm:tournament-trees} and \ref{thm:complete-trees} are tight up to a constant factor if we are only given the information on the order of the tree. However, in the oriented setting, Theorem \ref{thm:tournament-path} shows that if we know more about the structure of the tree then we are sometimes able to get a significantly better bound. It would be interesting to know whether Theorem \ref{thm:tournament-path} can  be generalised to arbitrary trees, as this would show that the longest directed subpath of the tree represents the main obstruction to the existence of  a monochromatic copy of the tree. It would also be interesting to determine if such a result holds in the directed Ramsey case as well.   

Burr's conjecture, if true, would imply our result for oriented Ramsey number of trees, Theorem \ref{thm:tournament-trees}, although it would not be able to give an improvement, such as the one discussed in the previous paragraph. In the directed case, the main part of our argument is for two colours, where Burr's conjecture would again not be helpful, but it would imply the result for more colours directly. In fact, our argument for more colours uses an intermediate step towards Burr's conjecture, namely Lemma \ref{lem:bounded-deg-tree-vs-indep-set}, which says that if in a directed graph of order $N$, there is no copy of a bounded degree tree of order $n$, then we can find an independent set of size $\Omega(N/n)$. It would be interesting to determine whether this result can be generalised to arbitrary trees.
For arbitrary trees, it is not hard to show that if a graph on $N$ vertices does not contain some oriented tree of order $n$, then there is an independent set of size $\Omega(N/(n \log N))$. Note that for $N$ much larger than $n$, this is weaker than $\Omega(N/n^2)$ implied by Burr's result. As an intermediate step towards a proof of Burr's conjecture, it would be interesting to determine whether the $\Omega(N/n)$ result can be generalised to arbitrary trees.

\subsection*{Acknowledgements}
    
    We would like to thank Sven Heberle and the anonymous referees for their insightful comments.

    \providecommand{\bysame}{\leavevmode\hbox to3em{\hrulefill}\thinspace}
\providecommand{\MR}{\relax\ifhmode\unskip\space\fi MR }
\providecommand{\MRhref}[2]{%
  \href{http://www.ams.org/mathscinet-getitem?mr=#1}{#2}
}
\providecommand{\href}[2]{#2}

\end{document}